\newtheorem{defn}{Definition}[section]
\newtheorem{lemma}[defn]{Lemma}
\newtheorem{thm}[defn]{Theorem}
\newtheorem{theorem}[defn]{Theorem}
\newtheorem{cor}[defn]{Corollary}
\newtheorem{prop}[defn]{Proposition}
\newtheorem{proposition}[defn]{Proposition}
\theoremstyle{definition}
\newcommand{\Q}{\mathbb Q}
\newcommand{\Z}{\mathbb Z}
\newcommand{\image}{\operatorname{im}}
\newcommand{\SL}{\operatorname{SL}}
\newcommand{\Gal}{\operatorname{Gal}}
\newcommand{\GL}{\operatorname{GL}}
\begin{document}



\pagenumbering{arabic}
\bibliographystyle{plain}
\title[Torsion Quartic Number Fields]{Torsion of Rational Elliptic Curves over Quartic Galois Number Fields}

\author{Michael Chou}

\address{Dept. of Mathematics, Univ. of Connecticut, Storrs, CT 06269, USA}
\email{michael.chou@uconn.edu} 




\begin{abstract} Let $E$ be an elliptic curve defined over $\Q$, and let $K$ be a number field of degree four that is Galois over $\mathbb{Q}$.  The goal of this article is to classify the different isomorphism types of $E(K)_{\text{tors}}$.
\end{abstract}

\maketitle


\section{Introduction}

Let $E$ be an elliptic curve defined over $\mathbb{Q}$.  Given a number field $K$, we may consider $E$ as an elliptic curve defined over $K$ and examine the structure of the points of $E$ with coordinates in $K$, denoted $E(K)$.  We have the following fundamental theorem describing the structure of $E(K)$:

\begin{theorem}[Mordell-Weil]
Let $E$ be an elliptic curve over a number field $K$. The group of $K$-rational points, $E(K)$, is a finitely generated abelian group.
\end{theorem}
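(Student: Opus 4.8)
The plan is to prove this via the classical method of descent, which proceeds in two essentially independent stages: the \emph{weak Mordell-Weil theorem} and an argument using the theory of heights. The weak Mordell-Weil theorem asserts that for a fixed integer $m \geq 2$ (it suffices to take $m = 2$), the quotient group $E(K)/mE(K)$ is finite. The height theory then upgrades this finiteness of a single quotient to the full statement that $E(K)$ is finitely generated, via an infinite descent.

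For the weak Mordell-Weil step, I would use Galois cohomology. Multiplication by $m$ on $E$ gives a short exact sequence of $\GK$-modules
\[
0 \arr E[m] \arr E(\overline{K}) \xrightarrow{\ \cdot m\ } E(\overline{K}) \arr 0,
\]
where $E[m]$ denotes the $m$-torsion. Taking Galois cohomology produces the Kummer injection
\[
E(K)/mE(K) \hookrightarrow H^1(\GK, E[m]).
\]
The target is infinite, so the key point is that the image lands in the subgroup of cohomology classes that are \emph{unramified outside} a finite set $S$ of places (those dividing $m$, the primes of bad reduction of $E$, and the archimedean places). I would then invoke the fact that this subgroup of $S$-unramified classes with values in the finite module $E[m]$ is itself finite, which reduces --- via inflation-restriction over the field $K(E[m])$ and Kummer theory --- to the finiteness of the $S$-class group and the finite generation of the group of $S$-units of a number field. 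Both are standard consequences of the finiteness of the class number and the Dirichlet unit theorem; here it is essential that $K$ is a number field, so that these apply.

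For the descent step, I would equip $E(K)$ with the canonical (N\'eron-Tate) height $\hat{h} \colon E(K) \arr \R$, or equivalently a naive Weil height satisfying the required estimates up to bounded error. The three properties I need are: (i) for every bound $B$ the set $\{P \in E(K) : \hat{h}(P) \leq B\}$ is finite; (ii) the quadraticity $\hat{h}(mP) = m^2 \hat{h}(P)$; and (iii) $\hat{h}(P - Q) \leq 2\hat{h}(P) + 2\hat{h}(Q) + C$ for some constant $C$. Choosing coset representatives $Q_1, \dots, Q_r$ for the finite group $E(K)/mE(K)$ and writing an arbitrary point as $P = mP' + Q_i$, properties (ii) and (iii) show that $\hat{h}(P') \leq \tfrac{1}{4}\hat{h}(P) + C'$ once $\hat{h}(P)$ exceeds a fixed threshold. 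Iterating, every point is an integral combination of the $Q_i$ together with the finitely many points of height below that threshold; by (i) this yields a finite generating set.

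The main obstacle is the weak Mordell-Weil theorem: establishing finiteness of $E(K)/mE(K)$ is the arithmetically substantive step, since it is precisely where the finiteness of class groups and the Dirichlet unit theorem enter and where one must control the ramification of the relevant cohomology classes. By contrast, the height machinery and the final descent are formal once the estimates (i)--(iii) are in hand, and depend only on the abstract structure of an abelian group equipped with such a height function.
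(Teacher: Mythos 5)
Your outline is the standard and correct proof of the Mordell--Weil theorem: the weak Mordell--Weil step via the Kummer sequence and the finiteness of $S$-class groups and $S$-unit groups, followed by the formal descent with the N\'eron--Tate height. The paper itself offers no proof --- it quotes this theorem as classical background --- so there is nothing to compare against; your sketch is the argument any standard reference (e.g.\ Silverman) gives, and it is sound.
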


By the fundamental theorem of finitely generated abelian groups it follows that, for any elliptic curve $E$ over $K$, there exists an integer $r_{K}> 0$ depending on $K$ such that
$$E(K) \cong E(K)_{\text{tors}} \oplus \mathbb{Z}^{r_{K}}$$
where $E(K)_{\text{tors}}$ is a finite group.  We call $r_{K}$ the rank of $E$ over $K$, and we call $E(K)_{\text{tors}}$ the torsion subgroup of the $E$ over $K$.  A natural question is which groups can arise as torsion subgroups of elliptic curves over certain number fields.

  In this paper we obtain a classification of the torsion subgroup of elliptic curves with rational coefficients over number fields $K$ that are quartic Galois extensions of $\mathbb{Q}$.  We separate the classification based on the isomorphism type of $\Gal(K/\mathbb{Q})$.  If $\Gal(K/\mathbb{Q}) \cong \mathbb{Z}/4\mathbb{Z}$ we call $K$ a cyclic quartic extension of $\mathbb{Q}$, and if $\Gal(K/\mathbb{Q}) \cong \mathbb{Z}/2\mathbb{Z} \oplus \mathbb{Z}/2\mathbb{Z}$ we call $K$ a biquadratic extension of $\mathbb{Q}$.
  
    The main results of this article are as follows:
    
\begin{theorem}\label{quarticgalois}
Let $E/\mathbb{Q}$ be an elliptic curve, and let $K$ be a quartic Galois extension of $\mathbb{Q}$.  Then $E(K)_{tors}$ is isomorphic to one of the following groups:
$$
\begin{array}{ll}
\mathbb{Z} / N_1 \mathbb{Z}, & N_1 = 1, \ldots ,16 , N_1 \neq 11, 14,\\
\mathbb{Z} / 2\mathbb{Z} \oplus \mathbb{Z}/2N_2 \mathbb{Z}, & N_2 = 1, \ldots , 6, 8,\\
\mathbb{Z} /3 \mathbb{Z} \oplus \mathbb{Z}/3N_3 \mathbb{Z}, & N_3 = 1,2, \\
\mathbb{Z} / 4\mathbb{Z} \oplus \mathbb{Z}/4N_4 \mathbb{Z}, & N_4 = 1, 2, \\
\mathbb{Z} / 5\mathbb{Z} \oplus \mathbb{Z}/ 5\mathbb{Z}, & \\
\mathbb{Z} / 6\mathbb{Z} \oplus \mathbb{Z}/ 6\mathbb{Z}. & \\
\end{array}
$$
Each of these groups, except for $\mathbb{Z}/15\mathbb{Z}$, appears as the torsion structure over some quartic Galois field for infinitely many (non-isomorphic) elliptic curves defined over $\mathbb{Q}$. 
\end{theorem}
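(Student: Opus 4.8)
The plan is to prove the two assertions separately: the classification (that the tabulated groups exhaust the possibilities) and the realizability (that each, except $\mathbb{Z}/15\mathbb{Z}$, occurs for infinitely many $E/\mathbb{Q}$). In both parts I would split according to whether $\Gal(K/\mathbb{Q}) \cong \mathbb{Z}/4\mathbb{Z}$ or $(\mathbb{Z}/2\mathbb{Z})^2$. Since each of these groups is abelian, every intermediate field of $K/\mathbb{Q}$ is Galois over $\mathbb{Q}$, and $K$ has a unique quadratic subfield in the cyclic case and three in the biquadratic case. Writing $G_{\mathbb{Q}} = \Gal(\overline{\mathbb{Q}}/\mathbb{Q})$ and $G_K = \Gal(\overline{\mathbb{Q}}/K)$, the engine of the whole argument is that $E(K)[n] = E[n]^{G_K}$ with $G_K \trianglelefteq G_{\mathbb{Q}}$ of index $4$; hence $\rho_{E,n}(G_K)$ is a normal subgroup of index dividing $4$ in the mod-$n$ image $\rho_{E,n}(G_{\mathbb{Q}})$, and a point of order $N$ lies in $E(K)$ precisely when $\rho_{E,N}(G_{\mathbb{Q}})$ possesses such a subgroup fixing the associated vector in $E[N]$.

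For the upper bound I would first control the non-cyclic part. If $E[m] \subseteq E(K)$, the Weil pairing forces $\mu_m \subseteq K$, so $\varphi(m) \mid 4$; this confines the first factor of any $\mathbb{Z}/m\mathbb{Z} \oplus \mathbb{Z}/n\mathbb{Z}$ to $m \in \{2,3,4,5,6\}$, forces $K = \mathbb{Q}(\mu_5)$ when $m=5$, and forces $\mathbb{Q}(\mu_3) \subseteq K$ when $3 \mid m$. To bound the odd-primary part I would exploit that $|\Gal(K/\mathbb{Q})| = 4$ is a power of $2$: for odd $\ell$ the group ring $\mathbb{Z}_\ell[\Gal(K/\mathbb{Q})]$ is a product of idempotent components, so $E(K)[\ell^\infty]$ decomposes into character eigenspaces. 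In the biquadratic case every character has order at most $2$, so each eigenspace is identified, via the isomorphisms $E \cong E^{d_i}$ over the quadratic subfields, with the $\ell$-primary torsion of $E$ or of one of its quadratic twists over $\mathbb{Q}$; Mazur's torsion theorem then caps the odd part at the $3$- and $9$-contributions. In the cyclic case the two order-$4$ characters contribute genuinely quartic eigenspaces: these correspond to a rational cyclic $\ell$-isogeny whose isogeny character has image of order exactly $4$, possible only when $4 \mid \ell - 1$, which together with Mazur's isogeny theorem and the classification of torsion of rational curves over quadratic fields confines the odd primes to $\{3,5,7,13\}$ with exactly the exponents appearing in the table.

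The $2$-primary part resists the idempotent decomposition and is the heart of the classification. Here I would determine directly the admissible images $\rho_{E,2^k}(G_{\mathbb{Q}})$ together with their index-dividing-$4$ normal subgroups having nontrivial fixed space --- equivalently, classify the points of degree dividing $4$ on the modular curves $X_1(2^k)$ and $X_1(2,2^k)$ that are subject to the abelian Galois constraint imposed by $K$. This produces the allowed $2$-power structures, up to $\mathbb{Z}/16\mathbb{Z}$, $\mathbb{Z}/2\mathbb{Z} \oplus \mathbb{Z}/16\mathbb{Z}$, and $\mathbb{Z}/4\mathbb{Z} \oplus \mathbb{Z}/8\mathbb{Z}$, while ruling out full $8$-torsion and larger $2$-groups. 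Assembling the $2$-part with the odd part by the Chinese remainder theorem, and discarding the combinations forbidden by the simultaneous constraints (which is what eliminates $\mathbb{Z}/11\mathbb{Z}$, $\mathbb{Z}/14\mathbb{Z}$, $\mathbb{Z}/18\mathbb{Z}$, and all products beyond those listed), yields exactly the tabulated groups.

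For realizability I would, for each listed $G \neq \mathbb{Z}/15\mathbb{Z}$, produce an explicit one-parameter family $E_t/\mathbb{Q}$ together with a quartic Galois field $K_t$ --- read off either from the field of definition of the relevant torsion point or from the isogeny character --- for which $E_t(K_t)_{\text{tors}} = G$; the families with $\mathbb{Z}/13\mathbb{Z}$, $\mathbb{Z}/16\mathbb{Z}$, and the like arise by base-changing curves carrying a rational cyclic isogeny to the abelian quartic field trivializing the isogeny character. Since the governing modular curve is rational (genus $0$ with a rational point) for all these $G$, the $j$-invariant $j(E_t)$ is non-constant and the family already contains infinitely many non-isomorphic curves, so the structure occurs infinitely often. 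The group $\mathbb{Z}/15\mathbb{Z}$ is the lone exception because its moduli problem is governed by $X_1(15)$, a genus-$1$ curve of rank $0$: after imposing the quartic-Galois condition only finitely many admissible points survive, so only finitely many $E/\mathbb{Q}$ gain $15$-torsion over a quartic Galois field, yet one explicit such curve shows the structure still occurs. The principal obstacle throughout is the $2$-primary analysis, and within the upper bound the certification that the borderline orders $13$, $15$, $16$ and nothing larger can occur: this requires the full list of possible mod-$2^k$ images of rational elliptic curves together with careful control of the entanglement between the division fields of $E$ and the abelian quartic field $K$.
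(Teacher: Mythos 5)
Your outline reproduces the paper's architecture at a high level (split by $\Gal(K/\mathbb{Q})$, Weil-pairing constraints, isogeny characters for odd primes, the Rouse--Zureick-Brown mod-$2^k$ classification, genus-$0$ families for realizability, a rank-$0$ modular curve for $\mathbb{Z}/15\mathbb{Z}$), but it has a genuine gap at exactly the point where the paper works hardest: the mixed-order exclusions. Your plan analyzes each prime power separately and then ``discards the combinations forbidden by the simultaneous constraints'' via CRT, but the two most delicate exclusions, $\mathbb{Z}/2\mathbb{Z}\oplus\mathbb{Z}/20\mathbb{Z}$ and $\mathbb{Z}/2\mathbb{Z}\oplus\mathbb{Z}/24\mathbb{Z}$ over cyclic quartic fields, are cases where every primary component is individually realizable, so no per-prime classification of images can rule them out, and you give no mechanism for deciding which combinations are forbidden. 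The paper's mechanism is quite specific: the forced $5$- or $3$-isogeny plus Kenku's bound of $8$ on a $\mathbb{Q}$-isogeny class pins down $E(\mathbb{Q})[2^\infty]$; full $2$-torsion must already live in the quadratic subfield $F$ (else $K=F(\sqrt{\Delta_E})$ is biquadratic, not cyclic); hence a point of order $4$ or $8$ is created by halving, and Knapp's halving criterion combined with the explicit cyclicity criterion $\frac{a^2}{m}-b^2=1$ forces $(2b)^2=5m$, which either pins the curve to the single $j$-invariant $78608$ (killed because $\Phi_5(X,78608)$ and $\Phi_3(X,78608)$ have no rational roots, so the requisite $5$- or $3$-isogeny cannot exist) or yields an irrational $x$-coordinate outright; the remaining $\mathbb{Z}/2\mathbb{Z}\oplus\mathbb{Z}/8\mathbb{Z}$-over-$F$ subcase needs a search of the Rouse--Zureick-Brown database producing an $8$-isogeny that, with the $3$-isogeny, gives an impossible $24$-isogeny. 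None of this entanglement analysis is recoverable from your sketch, and your own closing sentence concedes it is the principal obstacle without supplying the idea.

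Three further steps would fail as stated. First, $\varphi(m)\mid 4$ does \emph{not} confine full $m$-torsion to $m\le 6$: $\mathbb{Q}(\zeta_8)=\mathbb{Q}(i,\sqrt{2})$ and $\mathbb{Q}(\zeta_{12})=\mathbb{Q}(i,\sqrt{3})$ are biquadratic and $\mathbb{Q}(\zeta_{10})=\mathbb{Q}(\zeta_5)$ is cyclic quartic, so excluding $\mathbb{Z}/8\mathbb{Z}\oplus\mathbb{Z}/8\mathbb{Z}$ (the paper uses Fujita's Proposition 11 together with Bruin--Najman's quartic non-existence theorem) and full $10$- and $12$-torsion (Bruin--Najman's classification over $\mathbb{Q}(\zeta_5)$, and their exclusion of $\mathbb{Z}/4\mathbb{Z}\oplus\mathbb{Z}/12\mathbb{Z}$) is real content, not a cyclotomic triviality. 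Second, in the cyclic case your criterion ``order-$4$ isogeny character forces $4\mid\ell-1$'' admits $\ell=17$, since $17$-isogenies exist over $\mathbb{Q}$ and $4\mid 16$; excluding a quartic field of definition for a $17$-torsion point requires the explicit isogeny-character computations behind Lozano-Robledo's $S_{\mathbb{Q}}(4)=\{2,3,5,7,13\}$, which the paper cites rather than rederives. Third, the Rouse--Zureick-Brown theorem applies only to non-CM curves, so every $2$-adic step in your plan needs the separate CM treatment the paper performs via the Clark--Corn--Rice--Stankewicz quartic classification; your proposal never mentions CM. A smaller repair: your finiteness argument for $\mathbb{Z}/15\mathbb{Z}$ via quartic points on $X_1(15)$ is not sound as stated, since a genus-$1$ curve has infinitely many quartic points; the paper instead uses that $K/\mathbb{Q}$ Galois and $E(K)[15]$ cyclic force a rational $15$-isogeny, reducing to the finitely many rational points of $X_0(15)$. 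Your realizability plan, by contrast, is essentially the paper's (quadratic fields plus a torsion-neutral second quadratic, Jeon--Kim--Lee biquadratic families, Fujita's construction for $\mathbb{Z}/2\mathbb{Z}\oplus\mathbb{Z}/16\mathbb{Z}$, the genus-$0$ curve $X_{\Delta}(13)$ with $\Delta=\{\pm 1,\pm 5\}$, and $\mathbb{Q}(\zeta_5)$ for $\mathbb{Z}/5\mathbb{Z}\oplus\mathbb{Z}/5\mathbb{Z}$) and would go through once made explicit.
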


The proof of this theorem is broken up based on the structure of $\Gal(K/\mathbb{Q})$ and so, in fact, we have the following more specialized theorems:

\begin{theorem}\label{cyclicquartic}
Let $E/\mathbb{Q}$ be an elliptic curve, and let $K$ be a quartic Galois extension with $\Gal(K/\mathbb{Q}) \cong \mathbb{Z}/4\mathbb{Z}$.  Then $E(K)_{tors}$ is isomorphic to one of the following groups:
$$
\begin{array}{ll}
\mathbb{Z} / N_1 \mathbb{Z}, & N_1 = 1, \ldots , 10, 12, 13, 15 ,16,\\
\mathbb{Z} / 2\mathbb{Z} \oplus \mathbb{Z}/2N_2 \mathbb{Z}, & N_2 = 1, \ldots , 6, 8,\\
\mathbb{Z} / 5\mathbb{Z} \oplus \mathbb{Z}/ 5\mathbb{Z}. & \\
\end{array}
$$
\end{theorem}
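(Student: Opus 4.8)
The plan is to exploit the two features that distinguish this case: the curve is defined over $\Q$, so every torsion point is permuted by $G=\Gal(K/\Q)\cong\Z/4\Z$ and $E(K)_{tors}$ is a $\Z[G]$-module, and the arithmetic of cyclic quartic fields is extremely rigid. Writing $E(K)_{tors}\cong \Z/d_1\Z\oplus\Z/d_2\Z$ with $d_1\mid d_2$, the first step is to bound $d_1$ via the Weil pairing: since $E[d_1]\subseteq E(K)$ we get $\mu_{d_1}\subseteq K$. I would then invoke the classical fact that the unique quadratic subfield $F$ of a cyclic quartic field is real, since complex conjugation, if nontrivial on $K$, is the unique element of order $2$ in $\Z/4\Z$ and hence fixes $F$. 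Consequently $K$ contains no imaginary quadratic subfield, so the only cyclotomic fields contained in $K$ are $\Q$, the real field $F$, and possibly $K$ itself when $K=\Q(\zeta_5)$. This forces $\mu_n\subseteq K$ only for $n\in\{1,2\}$, unless $K=\Q(\zeta_5)$, in which case also $n\in\{5,10\}$. Hence $d_1\in\{1,2\}$, or $d_1=5$ with $K=\Q(\zeta_5)$. This single observation eliminates $\Z/3\Z\oplus\Z/3N\Z$, $\Z/4\Z\oplus\Z/4N\Z$ and $\Z/6\Z\oplus\Z/6\Z$ at once, and is precisely what separates the cyclic case from the biquadratic one.

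Next I would bound the orders that can occur. For a point $P\in E(K)$ of order $N$, the field $\Q(P)$ satisfies $\Q(P)\subseteq K$, so $[\Q(P):\Q]\mid 4$ and $\Q(P)$ is one of $\Q$, $F$, or $K$; moreover $\rho_{E,N}(\GK)$ fixes $P$ and, because $K/\Q$ is Galois of degree $4$, the quotient $\rho_{E,N}(\GQ)/\rho_{E,N}(\GK)$ is cyclic of order dividing $4$. The base case $E(\Q)_{tors}$ is given by Mazur's theorem, and since $F$ is quadratic I would feed $E(F)_{tors}\subseteq E(K)_{tors}$ into the known classification of torsion of rational elliptic curves over quadratic fields; the remaining task is to control the growth in the quadratic step $K/F$ subject to the constraint that $K/\Q$ (not merely $K/F$) is cyclic. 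Combined with Mazur's isogeny theorem this pins down the admissible prime-power orders, yields the cyclic groups $\Z/N\Z$ together with the $\Z/2\Z\oplus\Z/2N_2\Z$ families, and shows that the $d_1=5$ case cannot grow beyond $\Z/5\Z\oplus\Z/5\Z$: any extra $2$- or $3$-torsion would either enlarge the field past $\Q(\zeta_5)$ or contradict the rigidity of having $E[5]\subseteq E(\Q(\zeta_5))$.

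The main obstacle is the handful of borderline orders $N\in\{11,13,14,15,16\}$, which cannot be settled by degree bookkeeping alone and require the modular curves $X_1(N)$. A rational curve with a point of order $N$ over a cyclic quartic field produces a point of degree at most $4$ on $X_1(N)$ whose $j$-coordinate is rational and whose field of definition embeds in a cyclic quartic field. I would analyze these curves individually: showing that $X_1(11)$ and $X_1(14)$ admit no such points (excluding $N=11,14$), while $X_1(13)$, $X_1(15)$ and $X_1(16)$ do, with $X_1(15)$ contributing only finitely many curves (matching the exceptional status of $\Z/15\Z$ in Theorem~\ref{quarticgalois}). This step is where genus considerations, the location of the relevant low-degree points, and the requirement that the residue field be \emph{cyclic} rather than merely quartic all come together, and I expect it to be the most delicate part of the argument.

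Finally, to complete the classification I would exhibit each surviving group as $E(K)_{tors}$ for an explicit pair $(E,K)$: using $K=\Q(\zeta_5)$ for $\Z/5\Z\oplus\Z/5\Z$, suitable real cyclic quartic fields for the $\Z/2\Z\oplus\Z/2N_2\Z$ families, and explicit families of rational curves whose torsion grows appropriately upon base change, leaving $\Z/15\Z$ as the one structure realized by only finitely many curves.
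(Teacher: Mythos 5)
Your opening moves coincide with the paper's: the Weil-pairing bound on $d_1$ combined with the fact that the quadratic subfield $F$ of a cyclic quartic field is totally real (your complex-conjugation argument is actually slicker than the paper's computation) is exactly Lemma \ref{fullntorsion}, and passing from a $K$-point of odd order prime to $5$ to a $\Q$-rational isogeny is Proposition \ref{quarisog}. But the proposal misidentifies where the difficulty lies, and this is a genuine gap rather than a stylistic difference. The borderline cases are not $N\in\{11,13,14,15,16\}$: the prime $11$ is excluded at the outset because $11\notin S_{\Q}(4)=\{2,3,5,7,13\}$, the set of primes dividing torsion of rational elliptic curves in degree $\le 4$ (imported from \cite{lozanorobledo1} --- you never say how you bound the set of primes, and some such input is needed); $14$ falls to Proposition \ref{torsionfromquad} together with Lemma \ref{2torsionoverQ}; and $13$, $15$, $16$ actually occur. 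The hard cases are the composite orders $20$ and $24$. The groups $\mathbb{Z}/2\mathbb{Z}\oplus\mathbb{Z}/20\mathbb{Z}$ and $\mathbb{Z}/2\mathbb{Z}\oplus\mathbb{Z}/24\mathbb{Z}$ pass every soft test you list: each prime-power constituent occurs over a cyclic quartic field, and the implied $\Q$-isogenies have degree only $10$ and $12$ respectively, both admissible, so neither degree bookkeeping nor genus considerations on $X_1(N)$ can eliminate them.

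Excluding $20$ and $24$ is where the paper spends most of its effort, using machinery absent from your sketch: Knapp's halving criterion (Lemma \ref{halvingpoint}) to write down explicitly the field generated by a point of order $4$ or $8$ lying above a rational $2$-torsion point, the cyclicity criterion $\frac{a^2}{m}-b^2=1$ of Lemma \ref{criteriaforcyclic} to force $m=20t^2$ and hence a one-parameter family with constant $j$-invariant $78608$, evaluation of the modular polynomials $\Phi_3$ and $\Phi_5$ at that $j$-invariant to rule out the needed $3$- or $5$-isogeny, and (in the $\mathbb{Z}/2\mathbb{Z}\oplus\mathbb{Z}/8\mathbb{Z}$ subcase) the Rouse--Zureick-Brown classification of $2$-adic images together with the CM classification. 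The same Rouse--Zureick-Brown input is what caps the $2$-primary part at $\mathbb{Z}/2\mathbb{Z}\oplus\mathbb{Z}/16\mathbb{Z}$ rather than $\mathbb{Z}/2\mathbb{Z}\oplus\mathbb{Z}/32\mathbb{Z}$, another point your outline does not reach. Finally, the remaining composite orders $18,21,26,35,39,40,45,48,63,65,91$ must be disposed of individually; most do fall to the isogeny argument you gesture at, but $18$ needs an orbit computation combined with Najman's quadratic classification, and $21$ requires checking division polynomials on the finitely many curves with a rational $21$-isogeny. Your plan as written would stall exactly at the orders $20$ and $24$.
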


\begin{theorem}\label{biquad}
Let $E/\mathbb{Q}$ be an elliptic curve, and let $K$ be a quartic Galois extension with $\Gal(K/\mathbb{Q}) \cong \mathbb{Z}/2\mathbb{Z} \oplus \mathbb{Z}/2\mathbb{Z}$.  Then $E(K)_{tors}$ is isomorphic to one of the following groups:
$$
\begin{array}{ll}
\mathbb{Z} / N_1 \mathbb{Z}, & N_1 = 1, \ldots ,10 , 12, 15, 16,\\
\mathbb{Z} / 2\mathbb{Z} \oplus \mathbb{Z}/2N_2 \mathbb{Z}, & N_2 = 1, \ldots , 6, 8,\\
\mathbb{Z} /3 \mathbb{Z} \oplus \mathbb{Z}/3N_3 \mathbb{Z}, & N_3 = 1,2, \\
\mathbb{Z} / 4\mathbb{Z} \oplus \mathbb{Z}/4N_4 \mathbb{Z}, & N_4 = 1, 2, \\
\mathbb{Z} / 6\mathbb{Z} \oplus \mathbb{Z}/ 6\mathbb{Z}. & \\
\end{array}
$$
\end{theorem}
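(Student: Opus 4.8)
The plan is to exploit the rigidity forced by the biquadratic structure. Write $\Gal(K/\mathbb{Q}) = \langle \sigma, \tau\rangle \cong (\mathbb{Z}/2\mathbb{Z})^2$, so that $K$ contains exactly three quadratic subfields $F_1 = \mathbb{Q}(\sqrt a)$, $F_2 = \mathbb{Q}(\sqrt b)$, $F_3 = \mathbb{Q}(\sqrt{ab})$, the fixed fields of the three subgroups of order two. The guiding principle is that, since $E$ is defined over $\mathbb{Q}$, the three nontrivial characters of $G = (\mathbb{Z}/2\mathbb{Z})^2$ correspond to the quadratic twists $E^a, E^b, E^{ab}$, all of which are again curves over $\mathbb{Q}$. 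This lets me feed Mazur's theorem in four times and reduce the entire classification to finite bookkeeping, carried out prime by prime. The overwhelming majority of the argument is the upper bound (that every $E(K)_{\text{tors}}$ lands in the stated list); sharpness is a separate, easier realization step.

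First I would treat the odd part. Away from $2$, the character decomposition of $G$ gives a Galois-equivariant isomorphism $E(K)[m^{\infty}] \cong E(\mathbb{Q})[m^{\infty}] \oplus E^a(\mathbb{Q})[m^{\infty}] \oplus E^b(\mathbb{Q})[m^{\infty}] \oplus E^{ab}(\mathbb{Q})[m^{\infty}]$ for odd $m$, the $\pm 1$ eigenspace splitting over each $F_i$. Each summand is the odd part of a rational torsion group, so Mazur's theorem forces it to be cyclic of $3$-, $5$-, $7$-, or $9$-power order, which bounds the possible odd groups at once. To pass from "bounded" to the exact list I invoke the Weil pairing: if $E[n]\subseteq E(K)$ then $\zeta_n\in K$, so $\mathbb{Q}(\zeta_n)\subseteq K$ must itself have exponent two. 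This is precisely the point that separates the biquadratic list from the cyclic one of Theorem~\ref{cyclicquartic}: it kills $\mathbb{Z}/5\mathbb{Z}\oplus\mathbb{Z}/5\mathbb{Z}$ (as $\mathbb{Q}(\zeta_5)$ is cyclic quartic, not biquadratic) while permitting $\mathbb{Z}/3\mathbb{Z}\oplus\mathbb{Z}/3N_3\mathbb{Z}$ and $\mathbb{Z}/6\mathbb{Z}\oplus\mathbb{Z}/6\mathbb{Z}$, since $\mathbb{Q}(\zeta_3)=\mathbb{Q}(\sqrt{-3})$ is merely quadratic. The same circle of ideas, combined with the genus of $X_1(11)$, $X_1(13)$, and $X_1(14)$ and the fact that the sporadic $13$-torsion of rational curves is realized only over \emph{cyclic} quartic fields, removes $\mathbb{Z}/11\mathbb{Z}$, $\mathbb{Z}/13\mathbb{Z}$, $\mathbb{Z}/14\mathbb{Z}$, and $\mathbb{Z}/2\mathbb{Z}\oplus\mathbb{Z}/14\mathbb{Z}$.

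The $2$-primary part is where the clean twist decomposition fails, and I expect it to be the main obstacle. Here I would argue directly with the mod-$2^k$ representations $\rho_{E,2^k}\colon G_{\mathbb{Q}}\to \GL_2(\mathbb{Z}/2^k\mathbb{Z})$: a $2$-power subgroup $T$ lies in $E(K)$ exactly when $\mathbb{Q}(T)\subseteq K$, i.e. when the image of $G_{\mathbb{Q}}$ in $\Aut(T)$ is elementary abelian of rank at most two. Matching this constraint against the known classification of $2$-adic images (equivalently, $2$-power division fields) of elliptic curves over $\mathbb{Q}$, I would show the $2$-part is at most $\mathbb{Z}/16\mathbb{Z}$, $\mathbb{Z}/2\mathbb{Z}\oplus\mathbb{Z}/16\mathbb{Z}$, or $\mathbb{Z}/4\mathbb{Z}\oplus\mathbb{Z}/8\mathbb{Z}$; in particular full $E[8]$ never occurs, because the mod-$8$ image of a rational curve is never elementary $2$-abelian, which forces $N_4\le 2$ and excludes $\mathbb{Z}/8\mathbb{Z}\oplus\mathbb{Z}/8\mathbb{Z}$. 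Reassembling the $2$-part with the odd part by the Chinese Remainder Theorem, and discarding the few mixed combinations that still violate the cyclotomic constraint, yields exactly the stated upper bound.

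Finally I would establish sharpness by exhibiting, for each group on the list, a curve $E/\mathbb{Q}$ and an explicit biquadratic $K$ realizing it. For the groups with a repeated factor I read a one-parameter family off the relevant genus-zero modular curve $X_1(N)$ (or a fibered product of two such), then twist so that the required $\sqrt a,\sqrt b$ appear in the coordinates of the torsion points; distinct $j$-invariants along the family give infinitely many non-isomorphic curves, which is what underlies the infinitude assertion of Theorem~\ref{quarticgalois}. The single genuine exception is $\mathbb{Z}/15\mathbb{Z}$: the curve $X_1(15)$ has genus one with only finitely many pertinent rational points, so only finitely many rational curves acquire $15$-torsion over a biquadratic field, which is exactly why that case is omitted from the infinitude statement.
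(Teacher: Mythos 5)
Your overall architecture (twist decomposition of the odd part, representation-theoretic control of the $2$-part) is reasonable, but the upper-bound step has a genuine gap: the filters you actually name --- Mazur's theorem applied to each of $E(\mathbb{Q})$, $E^a(\mathbb{Q})$, $E^b(\mathbb{Q})$, $E^{ab}(\mathbb{Q})$, plus the Weil-pairing/cyclotomic constraint --- do not reduce the candidates to the stated list. They leave standing many groups that must still be excluded and that violate no cyclotomic condition: on the odd side, $\mathbb{Z}/3\mathbb{Z}\oplus\mathbb{Z}/9\mathbb{Z}$, $\mathbb{Z}/3\mathbb{Z}\oplus\mathbb{Z}/15\mathbb{Z}$, $\mathbb{Z}/21\mathbb{Z}$, $\mathbb{Z}/35\mathbb{Z}$, $\mathbb{Z}/45\mathbb{Z}$, $\mathbb{Z}/63\mathbb{Z}$ (each arises from an admissible assignment of Mazur groups to the four twists, e.g.\ a $3$-point on $E(\mathbb{Q})$ and a $7$-point on $E^{a}(\mathbb{Q})$ gives a point of order $21$ in $E(K)$, and only $\zeta_3\in K$ is forced); and after the CRT reassembly, mixed groups such as $\mathbb{Z}/14\mathbb{Z}$, $\mathbb{Z}/18\mathbb{Z}$, $\mathbb{Z}/20\mathbb{Z}$, $\mathbb{Z}/24\mathbb{Z}$, $\mathbb{Z}/2\mathbb{Z}\oplus\mathbb{Z}/20\mathbb{Z}$, $\mathbb{Z}/6\mathbb{Z}\oplus\mathbb{Z}/12\mathbb{Z}$. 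Eliminating these is the real content of the theorem and requires isogeny-theoretic input --- the classification of rational $N$-isogenies, Kenku's bound on the size of a $\mathbb{Q}$-isogeny class, or Najman's quadratic classification applied to the three intermediate fields --- none of which appears in your outline; "discarding the few mixed combinations that still violate the cyclotomic constraint" is not the right criterion, since most of the offending groups satisfy it. (Your aside about the genus of $X_1(11)$, $X_1(13)$, $X_1(14)$ is also misplaced: $11$ and $13$ are already killed by Mazur-per-summand, while $14$ is a mixed $2\cdot 7$ case needing a separate argument.)

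For comparison, the paper takes a much shorter route: since every biquadratic $K$ sits inside the maximal elementary $2$-abelian extension $L=\mathbb{Q}(\{\sqrt{m}\})$, Fujita's classification of $E(L)_{\mathrm{tors}}$ already supplies the entire upper bound as a subgroup enumeration of twenty explicit groups, and all of the exclusions you would have to fight through are absorbed into that single citation; only $\mathbb{Z}/8\mathbb{Z}\oplus\mathbb{Z}/8\mathbb{Z}$, $\mathbb{Z}/4\mathbb{Z}\oplus\mathbb{Z}/12\mathbb{Z}$, $\mathbb{Z}/4\mathbb{Z}\oplus\mathbb{Z}/16\mathbb{Z}$ need extra arguments (Fujita again and Bruin--Najman), and realizability follows from Najman's quadratic list via a no-new-torsion lemma together with examples of Jeon--Kim--Lee and Fujita. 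If you want to keep your from-scratch approach, you must add the isogeny bookkeeping described above; otherwise the argument does not close.
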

  
The proof of Theorem \ref{biquad} is simply a collection of known results, and so we give a simple explanation of the proof in Section \ref{biquadproof}.  The proof of Theorem \ref{cyclicquartic} is more involved.  Section \ref{auxresults} will show the tools involved in the proof, and Section \ref{cyclicquadproof} will contain the actual proof.  Finally, Section \ref{examples} will contain examples and references of infinite families for each torsion subgroup appearing in Theorem \ref{quarticgalois} to finish the proof.

This work is inspired by the many classifications of torsion subgroups of elliptic curves over number fields.  To begin. the classification of torsion structures of elliptic curves over $\mathbb{Q}$ was done by Mazur \cite{mazur1} who proved the following theorem on the torsion of an elliptic curve over $\mathbb{Q}$.

\begin{theorem}[Mazur \cite{mazur1}]
Let $E$ be an elliptic curve defined over $\mathbb{Q}$.  Then $E(\mathbb{Q})_{tors}$ is isomorphic to one of the following 15 groups:
\begin{align*}
&\mathbb{Z}/N_1\mathbb{Z}, &1 \leq N_1 \leq 12, N_1 \neq 11,\\
&\mathbb{Z}/2\mathbb{Z} \oplus \mathbb{Z}/ 2N_2\mathbb{Z}, &1 \leq N_2 \leq 4.
\end{align*}
\end{theorem}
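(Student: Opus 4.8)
The plan is to translate the classification into a statement about rational points on modular curves and then to attack those point sets directly. I would start with the elementary structural reduction. The Weil pairing gives a $\mathrm{Gal}(\overline{\mathbb{Q}}/\mathbb{Q})$-equivariant alternating pairing $E[m]\times E[m]\to \mu_m$, so if the full $m$-torsion $E[m]$ is rational then $\mu_m\subseteq\mathbb{Q}$, forcing $m\le 2$. Hence $E(\mathbb{Q})_{\text{tors}}$ is either cyclic, $\mathbb{Z}/N\mathbb{Z}$, or of the mixed form $\mathbb{Z}/2\mathbb{Z}\oplus\mathbb{Z}/2N\mathbb{Z}$. It therefore suffices to decide, for each $N$, whether an elliptic curve over $\mathbb{Q}$ can carry a rational point of exact order $N$ (respectively a rational $\mathbb{Z}/2\mathbb{Z}\oplus\mathbb{Z}/2N\mathbb{Z}$ level structure), and to produce examples whenever it can.

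The key observation is that each such torsion datum is a moduli problem. A pair $(E,P)$ with $P\in E(\mathbb{Q})$ of exact order $N$ corresponds to a non-cuspidal rational point on the modular curve $X_1(N)$, and the mixed structure corresponds to a rational point on the modular curve parametrizing a $\mathbb{Z}/2\mathbb{Z}\oplus\mathbb{Z}/2N\mathbb{Z}$ level structure. Thus the entire theorem reduces to computing the non-cuspidal rational points on a finite list of modular curves. For $N\in\{1,\dots,10,12\}$ the curve $X_1(N)$ has genus $0$ and possesses the rational cusp at $\infty$, so $X_1(N)\cong\mathbb{P}^1$ over $\mathbb{Q}$ and has infinitely many rational points; pulling these back along the universal family yields explicit one-parameter families of elliptic curves realizing each of these cyclic structures, and the analogous genus-$0$ curves for $N\le 4$ realize the four mixed structures. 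This disposes of the existence half of the statement.

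For the nonexistence half I would first cut the problem down to finitely many $N$. Reduction modulo a prime $p$ of good reduction injects $E(\mathbb{Q})_{\text{tors}}$ into $E(\mathbb{F}_p)$, which bounds the torsion of any fixed curve, but the uniform bound across all $E/\mathbb{Q}$ has to come from the curves $X_1(N)$ themselves. The route is to show that $X_1(N)$ has no non-cuspidal rational points for all $N$ outside the allowed list: since a point of order $N$ produces points of order $m$ for every $m\mid N$, it is enough to eliminate the minimal forbidden orders (the prime orders $11$ and $\ge 13$, the forbidden prime powers such as $16,25,27$, and the forbidden composites such as $14,15,18,20,21,24$). The genus-$1$ cases $X_1(11),X_1(14),X_1(15)$ I would handle concretely: write each as an elliptic curve over $\mathbb{Q}$, verify by descent that its Mordell--Weil group has rank $0$, and check that the finitely many rational torsion points are all cusps (or CM points not giving a valid $(E,P)$), so no new torsion survives.

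The main obstacle is precisely the deep input that makes the previous paragraph possible, and it cannot be reduced to an elementary computation. The decisive step is the prime-order statement: one must prove that for $p=11$ and all primes $p\ge 13$ the only rational points of $X_0(p)$, and hence of $X_1(p)$, are cusps. This is the heart of Mazur's argument \cite{mazur1}, carried out by studying the Jacobian $J_0(p)$ through the Eisenstein ideal and showing that an appropriate Eisenstein quotient has Mordell--Weil rank $0$ over $\mathbb{Q}$, which pins the rational points to the cuspidal locus. The remaining finitely many curves of genus $\ge 2$, with $X_1(13)$ as the prototype, demand their own finiteness arguments---either a direct determination of the rational Mordell--Weil group of the Jacobian followed by a Chabauty--Coleman computation when its rank is less than the genus, or the explicit equations and rational-point tabulations for $X_1(N)$. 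Assembling these curve-by-curve finiteness results with the genus-$0$ families above produces exactly the fifteen groups listed.
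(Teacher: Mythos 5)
The paper never proves this statement: it is quoted as known background with a citation to \cite{mazur1}, so your proposal can only be measured against the literature proof, whose architecture you reproduce correctly (Weil-pairing reduction to $\mathbb{Z}/N\mathbb{Z}$ and $\mathbb{Z}/2\mathbb{Z}\oplus\mathbb{Z}/2N\mathbb{Z}$, moduli translation to modular curves, genus-$0$ existence for $N\in\{1,\dots,10,12\}$, rank-$0$ genus-$1$ computations for $X_1(11),X_1(14),X_1(15)$, deep input at prime level). But your decisive step is stated falsely: it is \emph{not} true that $X_0(p)(\mathbb{Q})$ consists only of cusps for $p=11$ and all primes $p\geq 13$. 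The paper's own Theorem \ref{isogoverQ} records rational $p$-isogenies --- that is, noncuspidal rational points on $X_0(p)$ --- for $p=13,17,19,37,43,67,163$ (your claim even contradicts your own separate treatment of $X_1(13)$). The statement Mazur's torsion theorem requires concerns $X_1(p)$, and it cannot be deduced through the covering $X_1(p)\to X_0(p)$ from a false statement about $X_0(p)$. Mazur's actual argument exploits the extra rigidity of a rational point of \emph{order} $p$: reduction modulo small primes forces the corresponding section of the modular curve to pass through prescribed cusps, and a formal-immersion argument into the Eisenstein quotient of $J_0(p)$ (shown to have Mordell--Weil rank $0$) produces the contradiction; the full classification of $X_0(p)(\mathbb{Q})$, in which $37,43,67,163$ genuinely survive, came only afterwards in \cite{mazur1} by a finer analysis. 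As written, the heart of your nonexistence argument would fail.

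There is a second, smaller gap in your case reduction. Eliminating the minimal forbidden \emph{cyclic} orders does not dispose of the mixed groups $\mathbb{Z}/2\mathbb{Z}\oplus\mathbb{Z}/10\mathbb{Z}$ and $\mathbb{Z}/2\mathbb{Z}\oplus\mathbb{Z}/12\mathbb{Z}$, since the cyclic orders $10$ and $12$ are allowed over $\mathbb{Q}$; these two groups really do occur over quadratic fields (cf. Theorem \ref{kkmquad}), so they must be ruled out over $\mathbb{Q}$ by a separate analysis of the positive-genus modular curves parametrizing $\mathbb{Z}/2\mathbb{Z}\oplus\mathbb{Z}/2N\mathbb{Z}$-structures for $N=5,6$ (Kubert--Ogg-style or rank-$0$ descent computations). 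You should also add $49$ to your list of minimal forbidden prime powers. With these repairs --- the $X_0(p)$ claim replaced by the $X_1(p)$ statement proved via the formal immersion and Eisenstein quotient, and the two mixed eliminations added --- your outline coincides with the standard proof.
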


Each of these groups actually appear as the torsion subgroup of infinitely many (isomorphism classes of) elliptic curves over the rationals.

The classification of torsion subgroups of elliptic curves defined over quadratic number fields was answered by Kamienny, Kenku, and Momose:

\begin{theorem}[Kamienny \cite{kamienny}, Kenku, Momose \cite{kenkumomose}]\label{kkmquad}
Let $K$ be a quadratic number field, and let $E$ be an elliptic curve defined over $K$.  Then $E(K)_{\text{tors}}$ is isomorphic to one of the following groups:
$$
\begin{array}{ll}
\mathbb{Z}/N_1\mathbb{Z}, &1 \leq N_1 \leq 18, N_1 \neq 17,\\
\mathbb{Z}/2\mathbb{Z} \oplus \mathbb{Z}/ 2N_2\mathbb{Z}, &1 \leq N_2 \leq 6,\\
\mathbb{Z}/3\mathbb{Z} \oplus \mathbb{Z}/ 3N_3\mathbb{Z}, &N_3=1,2,\\
\mathbb{Z}/4\mathbb{Z} \oplus \mathbb{Z}/ 4\mathbb{Z}.
\end{array}
$$
\end{theorem}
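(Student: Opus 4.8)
The plan is to transport the problem to the arithmetic of modular curves and to generalize Mazur's method over $\mathbb{Q}$ to the quadratic setting, following Kamienny and Kenku--Momose. A torsion point of order $N$ on an elliptic curve $E$ over a quadratic field $K$ produces a non-cuspidal $K$-rational point on the modular curve $X_1(N)$, that is, a point of degree at most $2$ over $\mathbb{Q}$; the two-generator groups $\mathbb{Z}/m\mathbb{Z}\oplus\mathbb{Z}/n\mathbb{Z}$ correspond in the same way to degree-$\le 2$ points on the curves $X_1(m,n)$, where the Weil pairing already forces $\mu_m\subseteq K$ and hence $m\le 4$. So the whole statement reduces to deciding, for each level, whether these modular curves carry non-cuspidal points of degree $\le 2$, and then matching the surviving levels to honest elliptic curves. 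The argument splits into a uniform bounding step (showing $N$ is small and excluding the forbidden values, e.g. no prime order exceeding $13$ and in particular none of order $17$) and a finite determination step fixing the exact list.

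For the bounding step I would use Kamienny's formal immersion argument. Fix a rational cusp $\infty$ on $X_1(N)$ and consider the Abel--Jacobi map on the symmetric square $X_1(N)^{(2)}\to J_1(N)$, $D\mapsto[D-2\infty]$, composed with the projection to a quotient abelian variety $A$ of $J_1(N)$ whose Mordell--Weil group $A(\mathbb{Q})$ is finite --- concretely the winding (or Eisenstein) quotient, whose $\mathbb{Q}$-rank vanishes by the analytic results of Mazur and Kolyvagin--Logachev. A degree-$2$ point gives a $\mathbb{Q}$-rational point of $X_1(N)^{(2)}$, hence a rational point of $A$; if the composite $X_1(N)^{(2)}\to A$ is a \emph{formal immersion} at the image of $(\infty,\infty)$ modulo a suitable prime $p$ of good reduction, then two such points cannot share a reduction mod $p$, and the finiteness of $A(\mathbb{Q})$ forces every quadratic point to be cuspidal or to descend to $\mathbb{Q}$ --- impossible for a genuine order-$N$ point once $N$ is large.

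The formal immersion hypothesis is exactly where the $q$-coefficients of modular forms enter: it amounts to the linear independence modulo $p$ of the vectors of Fourier coefficients $(a_1(f),a_2(f))$ as $f$ ranges over a basis of the cuspidal cotangent space of $A$, equivalently the non-vanishing mod $p$ of a determinant built from the Hecke operators $T_1=1$ and $T_2$. Verifying this independence uniformly in $N$, together with a judicious choice of the auxiliary prime $p$ and of the quotient $A$ so that $A(\mathbb{Q})$ is provably finite, is the technical heart and the step I expect to be the main obstacle: it is what actually rules out all prime orders $\ell>13$, the value $N=17$, and the remaining prime powers, leaving only the values $N\le 18$ with $N\neq 17$.

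Finally I would treat the finitely many small and composite levels by hand. For these, $X_1(N)$ and the $X_1(m,n)$ have explicit models of low genus (rational, elliptic, hyperelliptic, or bielliptic), so one can enumerate their degree-$\le 2$ points directly --- the cases carrying infinitely many quadratic points arise precisely from a hyperelliptic or bielliptic map to $\mathbb{P}^1$ or to a positive-rank elliptic quotient --- and then check which resulting points come from genuine elliptic curves over quadratic fields. Assembling the surviving levels yields exactly the groups listed, completing the classification.
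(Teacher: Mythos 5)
The paper does not prove this theorem at all: it is quoted verbatim from Kamienny and from Kenku--Momose and used downstream as a black box (for instance in the $p=3$ and $p=5$ cases of Proposition \ref{maximalEKtors}), so there is no internal argument to compare yours against. Judged against the actual proofs in the cited sources, your outline is faithful to the strategy: Kamienny's contribution is precisely the formal immersion of $X_1(\ell)^{(2)}$ into the winding quotient and the reduction of the immersion criterion to the independence of the Hecke operators $T_1,T_2$ read off from $q$-coefficients, which bounds the primes dividing the torsion order by $13$; Kenku--Momose supply the determination of quadratic points on the finitely many remaining curves $X_1(N)$ and $X_1(m,n)$. However, as written this is a roadmap, not a proof: the two steps you yourself flag as the technical heart --- verifying the formal immersion/Hecke independence at a good auxiliary prime, and exhaustively enumerating degree-$\le 2$ points on the residual modular curves --- constitute essentially the entire content of the two cited papers and are only named, not carried out. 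One concrete slip in the reduction: the Weil pairing forces $\mu_m\subseteq K$ and hence $\varphi(m)\le 2$, which allows $m\in\{1,2,3,4,6\}$, not merely $m\le 4$ (note $\mathbb{Q}(\zeta_6)=\mathbb{Q}(\sqrt{-3})$ is quadratic); excluding $\mathbb{Z}/6\mathbb{Z}\oplus\mathbb{Z}/6N\mathbb{Z}$ over quadratic fields, and likewise capping $N_2$ at $6$, $N_3$ at $2$ and $N_4$ at $1$, requires the modular-curve analysis and does not follow from the pairing alone.
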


Again, each of these groups actually appear as the torsion subgroup of infinitely many (isomorphism classes of) elliptic curves over quadratic number fields.

However, already in degree 3 number fields, we have yet to find a complete classification of the torsion subgroups of elliptic curves defined over cubic number fields.

For number fields of degree 3, the torsion subgroups of elliptic curves defined over cubic number fields appearing infinitely often for non-isomorphic elliptic curves have been classified by Joen, Kim, and Schweizer.

\begin{theorem}[Jeon, Kim, Schweizer \cite{jeonkimschweizer}]
Let $K$ be a cubic number field, and let $E$ be an elliptic curve defined over $K$.  The groups appearing as $E(K)_{\text{tors}}$ infinitely often for non-isomorphic elliptic curves are precisely:
$$
\begin{array}{ll}
\mathbb{Z}/N_1\mathbb{Z}, &1 \leq N_1 \leq 20, N_1 \neq 17,19,\\
\mathbb{Z}/2\mathbb{Z} \oplus \mathbb{Z}/ 2N_2\mathbb{Z}, &1 \leq N_2 \leq 7.\\
\end{array}
$$
\end{theorem}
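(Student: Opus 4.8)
The plan is to recast the statement as a question about points of bounded degree on modular curves. For each candidate torsion group $G$, let $X_1(G)$ denote the modular curve over $\mathbb{Q}$ whose non-cuspidal, non-CM points classify pairs consisting of an elliptic curve together with a subgroup of its torsion isomorphic to $G$ (so $X_1(N)$ for the cyclic groups, and $X_1(2,2N)$ for the groups $\mathbb{Z}/2\mathbb{Z}\oplus\mathbb{Z}/2N\mathbb{Z}$). The group $G$ then occurs as $E(K)_{\text{tors}}$ for infinitely many non-isomorphic $E$ over varying cubic fields $K$ if and only if $X_1(G)$ carries infinitely many points of degree $3$ over $\mathbb{Q}$, after discarding the finitely many cusps and checking that the parametrized curves are genuinely pairwise non-isomorphic.

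First I would reduce to a finite search. By Merel's uniform boundedness theorem --- and, more sharply, by Parent's explicit bounds on the $p$-primary torsion of elliptic curves over cubic fields --- only finitely many groups can occur as a torsion subgroup over any cubic field whatsoever. This yields an explicit finite list of candidates $G$ to be tested one at a time.

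Next, for each candidate I would decide whether $X_1(G)$ has infinitely many cubic points by way of its genus and its $\mathbb{Q}$-gonality. If $X_1(G)$ has genus $0$ and a rational point it is isomorphic to $\mathbb{P}^1$ and the conclusion is immediate; this disposes of the small values of $N$. For genus $\geq 1$ I would invoke the structure theory of low-degree points (Faltings, Frey, Abramovich--Harris, Debarre--Fahlaoui): a curve of genus $\geq 2$ over $\mathbb{Q}$ has infinitely many cubic points essentially only when it is trigonal over $\mathbb{Q}$, i.e.\ carries a $\mathbb{Q}$-rational $g^1_3$, or admits a map of degree $\leq 3$ onto an elliptic curve of positive Mordell--Weil rank over $\mathbb{Q}$; a curve of genus $1$ qualifies precisely when it has positive rank. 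The problem therefore collapses to computing the $\mathbb{Q}$-gonality of the relevant $X_1(N)$ and $X_1(2,2N)$ and determining their maps of small degree to elliptic curves.

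Finally, for every group surviving this analysis I would exhibit an explicit rational family --- a non-constant $\mathbb{Q}$-morphism $\mathbb{P}^1 \to X_1(G)$, or a rational parametrization of a trigonal pencil --- to confirm that the structure really does occur infinitely often, while conversely ruling out the excluded values $N = 17$ and $N = 19$ by showing that the corresponding modular curves have $\mathbb{Q}$-gonality greater than $3$ and admit no degree $\leq 3$ map to a positive-rank elliptic curve. I expect the decisive difficulty to lie exactly in this borderline gonality analysis: fixing the precise edge of the list demands explicit equations for the higher-genus modular curves and a careful verification that no hidden source of cubic points --- an unnoticed trigonal pencil, or an elliptic quotient acquiring positive rank --- has been overlooked.
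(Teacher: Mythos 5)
This statement is not proved in the paper at all: it is quoted verbatim from Jeon--Kim--Schweizer \cite{jeonkimschweizer} as background, so there is no internal argument to compare yours against. That said, your outline is essentially the strategy of the cited reference itself: translate the problem into counting cubic points on $X_1(N)$ and $X_1(2,2N)$, reduce to a finite list of candidates via Merel/Parent, and then decide each case by genus, $\mathbb{Q}$-gonality, and the Harris--Silverman/Abramovich--Harris dichotomy (infinitely many points of degree $\le 3$ if and only if there is a degree $\le 3$ map to $\mathbb{P}^1$ or to an elliptic curve of positive rank), which is exactly the borderline analysis that excludes $N=17,19$. Two small points you gloss over and would need to handle to get the theorem as stated: first, a cubic point on $X_1(N)$ only certifies that $E(K)_{\text{tors}}$ \emph{contains} $\mathbb{Z}/N\mathbb{Z}$, so to conclude that each listed group occurs \emph{as} the torsion subgroup infinitely often you must check that the covering curves $X_1(N')\to X_1(N)$ for the finitely many admissible overgroups $N'$ do not absorb all but finitely many of your cubic points; second, the families you produce must be non-isotrivial (infinitely many $j$-invariants), not merely infinitely many points on the modular curve. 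Both are routine but belong in a complete proof.
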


From the case over $\mathbb{Q}$ and the case over quadratic number fields, we might expect this to be a full list of all torsion subgroups of elliptic curves over cubic number fields.  However, Najman showed that the curve $$y^2 + xy + y = x^3 - x^2 - 5x - 5$$ has torsion subgroup isomorphic to $\mathbb{Z}/21\mathbb{Z}$ over the cubic field $\mathbb{Q}(\zeta_{9})^{+}$, so the above list must be incomplete.  It remains to see whether there are any other torsion subgroups appearing in only finitely many elliptic curves over cubic number fields.\\

Jeon, Kim, and Park have also determined which groups appear as $E(K)_{\text{tors}}$ infinitely often for quartic number fields $K$ but, similar to the cubic case, there may be other torsion subgroups appearing only finitely many times.\\

\begin{theorem}[Jeon, Kim, Park, \cite{jeonkimpark} Theorem 3.6]
If $K$ varies over all quartic number fields and $E$ varies over all elliptic curves defined over $K$, the group structures that appear infinitely often as $E(K)_{tors}$ are exactly the following:

$$
\begin{array}{ll}
\mathbb{Z} / N_1 \mathbb{Z}, & N_1 = 1, \ldots ,18, 20, 21, 22, 24 \\
\mathbb{Z} / 2\mathbb{Z} \oplus \mathbb{Z}/2N_2 \mathbb{Z}, & N_2 = 1, \ldots , 9 \\
\mathbb{Z} / 3\mathbb{Z} \oplus \mathbb{Z}/3N_3 \mathbb{Z}, & N_3 = 1, 2, 3 \\
\mathbb{Z} / 4\mathbb{Z} \oplus \mathbb{Z}/4N_4 \mathbb{Z}, & N_4 = 1, 2 \\
\mathbb{Z} / 5\mathbb{Z} \oplus \mathbb{Z}/ 5\mathbb{Z}, & \\
\mathbb{Z} / 6\mathbb{Z} \oplus \mathbb{Z}/ 6\mathbb{Z}.
\end{array}
$$

In fact, all of these torsion structures already occur infinitely often if $K$ varies over all quadratic extensions of all quadratic number fields, that is, all biquadratic number fields.
\end{theorem}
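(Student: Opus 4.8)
The plan is to translate the problem into one about rational points of bounded degree on modular curves, and then to apply the now-standard dictionary between ``infinitely many points of degree $d$'' and the gonality of the curve. Recall that for a group $G = \Z/M\Z \oplus \Z/N\Z$ with $M \mid N$ there is a modular curve $X_1(M,N)/\Q$ whose non-cuspidal, non-CM points parametrize pairs $(E,\iota)$, where $\iota$ is an embedding of $G$ into the torsion of $E$, and a point has degree $d$ exactly when the corresponding datum is defined over a field of degree $d$. Thus $G$ occurs as $E(K)_{\mathrm{tors}}$ for infinitely many non-isomorphic $E$ over varying quartic $K$ if and only if $X_1(M,N)$ has infinitely many quartic points over $\Q$, away from the finitely many cusps and CM points and avoiding isotrivial families. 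By Merel's uniform boundedness theorem, together with the explicit torsion bounds over quartic fields, only finitely many groups $G$ can occur at all, so the task reduces to deciding the dichotomy ``finitely versus infinitely many quartic points'' for each curve on a finite explicit list.

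For that dichotomy I would use two complementary inputs. On the finiteness side, a theorem of Frey, deduced from Faltings' results on subvarieties of abelian varieties applied to the symmetric power $\mathrm{Sym}^4 X$, states that a curve $X/\Q$ with infinitely many points of degree $\le 4$ must have $\Q$-gonality at most $8$; a sharper analysis in the spirit of Abramovich--Harris and Harris--Silverman identifies when degree-$4$ points genuinely accumulate, namely when $X$ admits a degree-$\le 4$ map to $\PP^1$ over $\Q$ or a degree-$\le 4$ map onto a positive-rank quotient. On the existence side, such a map to $\PP^1$, or a degree-$\le 4$ map onto an elliptic curve $E/\Q$ of positive rank, immediately produces infinitely many quartic points by pulling back rational points. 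The bulk of the work is then computational: determine the genus and the $\Q$-gonality of each candidate, i.e.\ $X_1(N)$ for $N \le 18, 20, 21, 22, 24$ and $X_1(M,N)$ for the listed product structures, using the gonality computations of Derickx--van Hoeij and explicit plane models. For each surviving curve I would exhibit the relevant low-degree map or positive-rank elliptic quotient; for each excluded value (such as $N = 19, 23$) I would verify that the gonality is too large and no suitable quotient exists, forcing finiteness.

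The sharper and genuinely harder claim is the final sentence: every surviving structure already occurs infinitely often over \emph{biquadratic} fields. Here the guiding constraint is the Weil pairing: if $E[N] \subseteq E(K)$ then $\zeta_N \in K$, so the full-level structures $\Z/N\Z \oplus \Z/N\Z$ are forced into fields containing $\Q(\zeta_N)$, which for $N = 5,6$ already pins $K$ down to a biquadratic field; this is precisely why $\Z/5\Z\oplus\Z/5\Z$ and $\Z/6\Z\oplus\Z/6\Z$ land here. For the remaining groups I would realize the quartic points inside compositums of two quadratic fields by one of two routes: either find a degree-$2$ map $X_1(M,N) \to C$ onto a curve $C$ with infinitely many quadratic points (so $C$ is hyperelliptic, or bielliptic with a positive-rank elliptic quotient), so that the fibers over those quadratic points are defined over biquadratic fields; or start from the quadratic classification of Kamienny--Kenku--Momose and realize $G$ as the torsion gained after a single further quadratic base change, the compositum of the two quadratic fields again being biquadratic.

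The main obstacle I anticipate is twofold. First, gonality over $\Q$, rather than over $\overline{\Q}$, is delicate for the borderline curves, and the finiteness direction requires the sharpened form of the degree-$4$ criterion rather than the crude bound $\gamma_\Q \le 8$; one must rule out exotic sources of quartic points arising from positive-dimensional pieces of the image of $\mathrm{Sym}^4 X$ in the Jacobian. Second, in the biquadratic refinement one must guarantee that the constructed families are genuinely infinite and non-isotrivial, i.e.\ that the quartic points obtained have field of definition of degree exactly $4$, lie away from the finitely many cusps and CM points, and correspond to pairwise non-isomorphic $E$; controlling this uniformly across all the product structures, and in particular for the higher-genus curves such as $X_1(22)$, $X_1(24)$, and $X_1(4,8)$, is where the delicate casework concentrates.
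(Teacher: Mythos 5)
This statement is not proved in the paper at all: it is imported verbatim from Jeon--Kim--Park \cite{jeonkimpark} (their Theorem 3.6) and used as a black box, so there is no internal proof to compare your attempt against. Judged on its own terms, your proposal does reconstruct the strategy that the cited paper actually follows: translate the question into counting quartic points on the modular curves $X_1(M,N)$, invoke the Faltings--Frey--Abramovich--Harris circle of ideas to reduce ``infinitely many points of degree $\le 4$'' to the existence of a low-degree map to $\PP^1$ or to a positive-rank elliptic quotient, and then do the case-by-case gonality and quotient computations. So the approach is the right one.

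That said, what you have written is a programme rather than a proof, and two of the deferred steps are genuinely nontrivial. First, there is a gap between ``$X_1(M,N)$ has infinitely many quartic points'' and ``$G = \Z/M\Z\oplus\Z/N\Z$ occurs infinitely often as the \emph{exact} torsion subgroup $E(K)_{\text{tors}}$'': a quartic point only certifies that $E(K)_{\text{tors}}$ \emph{contains} $G$, and one must argue that in the constructed infinite families the torsion does not generically jump to a larger group (and, dually, that the points obtained really have degree $4$ rather than $1$ or $2$, and correspond to non-isomorphic, non-CM curves). You flag the second half of this but not the first. Second, for $d=4$ the Abramovich--Harris criterion is strictly more delicate than the $d\le 3$ case (the classification of positive-dimensional abelian subvarieties of $W_4(C)$ admits extra cases), and the finiteness direction for the excluded levels cannot be settled by the crude bound $\gamma_\Q \le 8$ alone; you correctly name this obstacle but do not resolve it, and resolving it is where the real work of \cite{jeonkimpark} lies. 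The biquadratic refinement in your last paragraph is likewise only a sketch: exhibiting, for each listed $G$, an explicit infinite family over compositums of two quadratic fields (as Jeon--Kim--Lee later did for $\Z/4\Z\oplus\Z/8\Z$ and $\Z/6\Z\oplus\Z/6\Z$) is a concrete construction that your two proposed routes gesture at but do not carry out.
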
 

In all of the above cases we have allowed our elliptic curves to be defined over the number fields themselves.  However, it is often in one's interest to only consider elliptic curves defined over $\mathbb{Q}$ and see what torsion subgroups arise if we base extend these curves to higher degree number fields.  In a paper by Najman, the classification of torsion structures of rational elliptic curves over quadratic number fields is given:

\begin{theorem}[Najman, \cite{najman}, Theorem 2]\label{quadcase}
Let $E$ be an elliptic curve defined over $\mathbb{Q}$, and let $F$ be a quadratic number field.  Then$E(F)_\text{tors}$ is isomorphic to one of the following groups
$$
\begin{array}{ll}
\mathbb{Z}/N_1\mathbb{Z}, &N_1 =1, \ldots, 10, 12, 15, 16 \\
\mathbb{Z}/2\mathbb{Z} \oplus \mathbb{Z}/2N_2\mathbb{Z}, &1 \leq N_2 \leq 6 \\
\mathbb{Z}/3\mathbb{Z} \oplus \mathbb{Z}/3N_3\mathbb{Z}, &N_3 = 1,2\\
\mathbb{Z}/4\mathbb{Z} \oplus \mathbb{Z}/4\mathbb{Z}.
\end{array}
$$
Each of these groups, except for $\mathbb{Z}/15\mathbb{Z}$, appears as a torsion structure over a quadratic field for infinitely many rational elliptic curves $E$.
\end{theorem}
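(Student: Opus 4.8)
The plan is to combine the classification over \emph{all} quadratic fields (Theorem \ref{kkmquad}) with the extra rigidity forced by the hypothesis $E/\mathbb{Q}$, using quadratic twists together with Mazur's theorem to prune the Kamienny--Kenku--Momose list down to the smaller one. Write $F = \mathbb{Q}(\sqrt d)$ and let $\sigma$ generate $\Gal(F/\mathbb{Q})$. Because $E$ is defined over $\mathbb{Q}$, the involution $\sigma$ acts on $E(F)$; for every \emph{odd} $m$, $2$ is invertible modulo $m$, so $\sigma$ splits $E(F)[m]$ into $\pm 1$-eigenspaces. The $+1$ part is exactly $E(\mathbb{Q})[m]$, and the $-1$ part is carried by the twisting isomorphism $E\to E^{d}$ (defined over $F$) onto $E^{d}(\mathbb{Q})[m]$, giving
$$E(F)[m] \;\cong\; E(\mathbb{Q})[m]\ \oplus\ E^{d}(\mathbb{Q})[m],$$
where both summands are constrained by Mazur's theorem. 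This identity is the engine of the whole argument.

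\textbf{Upper bound.} By Theorem \ref{kkmquad} the group $E(F)_{\text{tors}}$ already lies in the Kamienny--Kenku--Momose list, so it suffices to delete from it precisely the cyclic groups $\mathbb{Z}/11\mathbb{Z},\ \mathbb{Z}/13\mathbb{Z},\ \mathbb{Z}/14\mathbb{Z},\ \mathbb{Z}/18\mathbb{Z}$ (every non-cyclic group in that list already survives). For the odd primes $11$ and $13$, the displayed splitting and Mazur's theorem force $E(F)[11]=E(F)[13]=0$. For $\mathbb{Z}/14\mathbb{Z}\cong\mathbb{Z}/2\mathbb{Z}\oplus\mathbb{Z}/7\mathbb{Z}$ and $\mathbb{Z}/18\mathbb{Z}\cong\mathbb{Z}/2\mathbb{Z}\oplus\mathbb{Z}/9\mathbb{Z}$ the argument is two-step: a point of order $7$ (resp.\ $9$) lies in the odd part, so the splitting makes one of $E,E^{d}$ carry a rational point of that order, whence by Mazur that curve has torsion exactly $\mathbb{Z}/7\mathbb{Z}$ (resp.\ $\mathbb{Z}/9\mathbb{Z}$) and thus no rational $2$-torsion. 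Since twisting leaves the $x$-coordinates of the $2$-torsion unchanged, $E$ itself then has no rational $2$-torsion, so its $2$-division polynomial is an irreducible cubic, which acquires no root in a quadratic field; hence $E(F)[2]=0$ and the group of order $14$ (resp.\ $18$) cannot occur.

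\textbf{Realizability.} For each surviving group I would exhibit infinitely many non-isomorphic rational curves attaining it. The groups already occurring infinitely often over $\mathbb{Q}$ (Mazur's groups) base-change directly, and where an extra $2$-torsion point or larger $2$-part is wanted one twists by a suitable $d$ so that the new point becomes rational over $F=\mathbb{Q}(\sqrt d)$. The genuinely new structures --- $\mathbb{Z}/16\mathbb{Z}$, $\mathbb{Z}/2\mathbb{Z}\oplus\mathbb{Z}/10\mathbb{Z}$, $\mathbb{Z}/2\mathbb{Z}\oplus\mathbb{Z}/12\mathbb{Z}$, $\mathbb{Z}/3\mathbb{Z}\oplus\mathbb{Z}/3N_3\mathbb{Z}$, and $\mathbb{Z}/4\mathbb{Z}\oplus\mathbb{Z}/4\mathbb{Z}$ --- I would produce from the genus-zero modular curves $X_1(G)$, pulling back infinitely many rational points, with the Weil pairing fixing the field (e.g.\ full $3$-torsion forces $\mu_3\subset F$, hence $F=\mathbb{Q}(\sqrt{-3})$).

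The sharp and hardest point is the finiteness assertion for $\mathbb{Z}/15\mathbb{Z}$. By the splitting, $\mathbb{Z}/15\mathbb{Z}\cong\mathbb{Z}/3\mathbb{Z}\oplus\mathbb{Z}/5\mathbb{Z}\subseteq E(F)$ can arise only when, after possibly interchanging $E$ and $E^{d}$, one member of the pair $\{E,E^{d}\}$ carries a rational $3$-torsion point and the other a rational $5$-torsion point, since Mazur forbids both on a single rational curve. This couples a curve to its quadratic twist through simultaneous level-$3$ and level-$5$ structures, i.e.\ it corresponds to a rational point on the fiber product of $X_1(5)$ with the twisted copy of $X_1(3)$; I would show this curve has genus greater than $1$ and hence, by Faltings, only finitely many rational points, so only finitely many $j$-invariants --- and thus finitely many $E$ --- realize $\mathbb{Z}/15\mathbb{Z}$. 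Establishing this finiteness, rather than the elimination step, is where the real work lies.
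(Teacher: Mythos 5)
First, note that the paper does not prove this statement: it is imported verbatim from Najman \cite{najman}, so there is no internal proof to compare against. Judged on its own terms, the containment half of your argument is correct and is essentially the standard one: Theorem \ref{kkmquad} bounds $E(F)_{\text{tors}}$ by the Kamienny--Kenku--Momose list; the decomposition $E(F)[m]\cong E(\mathbb{Q})[m]\oplus E^{d}(\mathbb{Q})[m]$ for odd $m$ (exactly Lemma \ref{twist} with base field $\mathbb{Q}$) kills $11$ and $13$ via Mazur; and your two-step elimination of $14$ and $18$ --- Mazur forces the member of $\{E,E^{d}\}$ carrying the rational $7$- or $9$-torsion point to have trivial rational $2$-torsion, rationality of $2$-torsion is a twist invariant, and the argument of Lemma \ref{2torsionoverQ} then gives $E(F)[2]=0$ --- is sound. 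The realizability claims are only sketched, but the route through genus-zero modular curves together with a non-growth argument in the spirit of Lemma \ref{biquadaddnotorsion} is the right one.

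The genuine gap is in the $\mathbb{Z}/15\mathbb{Z}$ finiteness step, which you correctly flag as the hard point but then resolve by a false mechanism. The curve you describe --- the fiber product over the $j$-line of $X_1(5)$ with a twist of $X_1(3)$ --- is geometrically a twist of $X_1(15)$, which has genus $1$, not genus greater than $1$ (likewise $X_0(15)$ has genus $1$), so Faltings does not apply; worse, as $d$ varies you are looking at infinitely many genus-one twists, any one of which could a priori have positive rank and infinitely many rational points, so ``apply Faltings twist by twist'' cannot be repaired by merely recomputing the genus. The correct mechanism is the one this paper itself invokes at the end of Section \ref{examples}: a rational $3$-torsion point on one of $E,E^{d}$ and a rational $5$-torsion point on the other give $E$ a $\mathbb{Q}$-rational $15$-isogeny (your eigenspace argument, or Lemma \ref{isogeny}), hence a rational point on the single fixed curve $X_0(15)$, which has genus $1$ and Mordell--Weil rank $0$ and therefore finitely many rational points, yielding finitely many $j$-invariants. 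One must then still pass from finitely many $j$-invariants to finitely many curves $E$: for each such $j$ only finitely many quadratic twists can carry a rational $3$- (resp.\ $5$-) torsion point, since each rational root $x_0$ of the relevant division polynomial pins down the twisting class $d=f(x_0)$ modulo squares. Your write-up omits both the rank-zero input and this twist-counting step.
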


The main result of \cite{najman} is the classification of torsion structures of rational elliptic curves over cubic number fields:

\begin{theorem}[Najman, \cite{najman}, Theorem 1]\label{cubecase}
Let $E$ be an elliptic curve defined over $\Q$, and let $K / \mathbb{Q}$ be a cubic extension.  Then, $E(K)_{\text{tors}}$ is one of the following groups:
$$
\begin{array}{ll}
\mathbb{Z}/N_1\mathbb{Z}, &N_1 = 1, \ldots, 10,12,13,14,18,21 \\
\mathbb{Z}/2\mathbb{Z} \oplus \mathbb{Z}/2N_2\mathbb{Z}, &N_2 = 1,2,3,4,7. 
\end{array}
$$
\end{theorem}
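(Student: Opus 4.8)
The plan is to exploit the fact that $E$ is defined over $\Q$ so strongly that the growth of torsion is governed by the mod-$n$ Galois representations $\rho_{E,n}\colon \GQ \to \GL_2(\Ints/n\Ints)$ together with the single numerical constraint $[K:\Q]=3$. A point $P \in E(K)$ of order $n$ satisfies $[\Q(P):\Q]\mid 3$, so the $\GQ$-orbit of the corresponding vector in $(\Ints/n\Ints)^2$ has size $1$ or $3$; equivalently $(E,P)$ is a noncuspidal point of degree dividing $3$ on $X_1(n)$ lying over the rational $j$-invariant $j(E)$. I would first settle the \emph{group structure}. By the Weil pairing, if $E(K)\supseteq E[p]$ for an odd prime $p$ then $\mu_p\subseteq K$, forcing $p-1=[\Q(\zeta_p):\Q]\mid 3$, which is impossible for odd $p$. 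Hence $E(K)[p]$ is cyclic for every odd prime, and full $2$-torsion can occur only when $\Q(E[2])\subseteq K$, which for a cubic $K$ means $\Gal(\Q(E[2])/\Q)\cong\Ints/3\Ints$ and $K=\Q(E[2])$ is the (cyclic cubic) $2$-division field. This already pins $E(K)_{\text{tors}}$ to the shape $\Ints/N\Ints$ or $\Ints/2\Ints\oplus\Ints/2N\Ints$.

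Next I would bound the primes and prime powers dividing $|E(K)_{\text{tors}}|$ by the orbit dichotomy. Orbit size $1$ is Mazur's theorem, giving rational $p$-torsion only for $p\le 7$. For orbit size $3$ the vector $P$ is fixed by an index-$3$ subgroup of $\image\rho_{E,p}$; splitting into the case where the line $\langle P\rangle$ is $\GQ$-stable (so $E$ admits a rational $p$-isogeny whose character has order $3$, forcing $3\mid p-1$) and the case where the line moves, one reduces to the known list of primes for which $X_0(p)$ carries a noncuspidal rational point. Intersecting that list with $p\equiv 1\pmod 3$ leaves $p\in\{7,13,19,37,43,67,163\}$, and since $X_0(7)$ and $X_0(13)$ have genus $0$ while the curves with a $p$-isogeny for $p\in\{19,37,43,67,163\}$ are finitely many CM curves, one checks directly that only $p\in\{7,13\}$ produce genuinely new cubic torsion. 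The same orbit analysis applied to $X_1(p^k)$ bounds the prime powers: ruling out degree-$\le 3$ points over the rational $j$-line on $X_1(16)$, $X_1(27)$, $X_1(25)$, $X_1(49)$, and $X_1(169)$ pins the $2,3,5,7,13$-parts at $8,9,5,7,13$ respectively.

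The subtler arithmetic lies in the \emph{combinations}: which admissible prime powers actually co-occur. The structural reduction explains the absence of repeated odd primes, but one must still admit $\Ints/6\Ints,\Ints/10\Ints,\Ints/12\Ints,\Ints/14\Ints,\Ints/18\Ints,\Ints/21\Ints$ and the groups $\Ints/2\Ints\oplus\Ints/2N\Ints$ with $N\in\{1,2,3,4,7\}$, while excluding $\Ints/15\Ints$, $\Ints/20\Ints$, and $\Ints/2\Ints\oplus\Ints/10\Ints$. For each exclusion I would analyze the fiber-product modular curve $X_1(m,n)$: a rational $E$ with order-$15$ torsion over a cubic field yields a degree-$\le 3$ point on $X_1(15)$ over a rational $j$, and one shows via the Mordell-Weil group of its Jacobian (Chabauty-Coleman, or the classification of cubic points on $X_1(15)$) that every such point is cuspidal or has irrational $j$; the same treatment of $X_1(20)$, $X_1(2,10)$, $X_1(11)$, and $X_1(16)$ disposes of the remaining non-occurrences, the extra constraint that $K=\Q(E[2])$ being what kills $\Ints/2\Ints\oplus\Ints/10\Ints$. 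For realizability, every group whose $X_1$-model has genus $0$ is produced by infinitely many rational $E$ using Kubert-style parametrizations followed by a generic cubic base change, and the sporadic $\Ints/21\Ints$ is realized by the explicit curve $y^2+xy+y=x^3-x^2-5x-5$ over $\Q(\zeta_9)^+$ already recorded above, corresponding to an isolated cubic point on $X_1(21)$.

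I expect the main obstacle to be the determination of cubic points on the higher-genus modular curves and their fiber products---$X_1(13)$ and $X_1(18)$ of genus $2$, $X_1(21)$ of genus $5$, and products such as $X_1(2,14)$---together with the nonexistence statements for $X_1(11)$, $X_1(15)$, $X_1(16)$, and $X_1(20)$. Here the naive vector-orbit bookkeeping yields no contradiction, and one genuinely needs the Mordell-Weil groups of the relevant Jacobians plus a rigorous enumeration of low-degree points (Chabauty-Coleman, or the formal-immersion method of Kamienny refined to cubic points), all filtered through the condition $j(E)\in\Q$. The delicate endpoint is to show that $\Ints/21\Ints$ occurs yet is genuinely sporadic---that the cubic point over $\Q(\zeta_9)^+$ is the only order-$21$ cubic point lying over a rational $j$.
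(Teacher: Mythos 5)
A preliminary caveat: the paper does not prove this statement at all --- it is imported verbatim as Theorem 1 of Najman \cite{najman} and used downstream as a black box --- so your proposal can only be judged against Najman's published proof rather than against anything in this article. In outline you have recovered the skeleton of that proof correctly: the Weil-pairing argument forcing $E(K)[p]$ cyclic for odd $p$ over an odd-degree field, the dichotomy $[\mathbb{Q}(P):\mathbb{Q}]\in\{1,3\}$, the passage from Galois-stable lines to rational isogenies and the classification of noncuspidal points of $X_0(n)(\mathbb{Q})$, explicit determination of low-degree points on the relevant $X_1(n)$ filtered through $j(E)\in\mathbb{Q}$, and the sporadic order-$21$ point on $X_1(21)$ over $\mathbb{Q}(\zeta_9)^+$ are all genuinely the ingredients of the cited argument.

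That said, two steps are wrong as written, and the decisive ones are deferred. First, your $2$-torsion dichotomy drops the case $E[2]\subseteq E(\mathbb{Q})$: full $2$-torsion over a cubic $K$ forces $[\mathbb{Q}(E[2]):\mathbb{Q}]\in\{1,3\}$, not $[\mathbb{Q}(E[2]):\mathbb{Q}]=3$, so the ``extra constraint that $K=\mathbb{Q}(E[2])$'' which you say kills $\mathbb{Z}/2\mathbb{Z}\oplus\mathbb{Z}/10\mathbb{Z}$ simply does not exist for a curve with rational full $2$-torsion: for such a curve $K$ is an arbitrary cubic field, and nothing in your sketch prevents it from acquiring a point of order $5$ there. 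Excluding that configuration is genuine modular-curve work (cubic points of $X_1(2,10)$ lying over rational $j$-invariants), which you name but do not perform. Second, in the prime analysis, the case where the line $\langle P\rangle$ is \emph{not} Galois-stable does not ``reduce to the known list of primes for which $X_0(p)$ carries a noncuspidal rational point'': with no stable line there is no rational $p$-isogeny and hence no rational point of $X_0(p)$ to invoke. Moreover, an index-$3$ subgroup of $\image\rho_{E,p}$ fixing a nonzero vector need not stabilize any line --- an element of order $3$ in $\GL_2(\F_5)$ with irreducible characteristic polynomial $x^2+x+1$ fixes no line while its trivial subgroup fixes every vector --- so closing this case requires extra input (surjectivity of the determinant, complex conjugation, or, as Najman in effect does, Parent's theorem bounding torsion primes over cubic fields by $13$). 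Finally, since the statement as quoted is exactly the upper-bound list, its entire mathematical content resides in the nonexistence results you defer to ``Chabauty--Coleman or a classification of cubic points'' --- on $X_1(11)$, $X_1(15)$, $X_1(16)$, $X_1(20)$, $X_1(25)$, $X_1(27)$, $X_1(2,10)$, and the uniqueness analysis on $X_1(21)$ --- so what you have is a plausible plan whose every load-bearing computation remains open, with the two concrete missteps above needing repair before the plan could be executed.
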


More generally, in \cite{lozanorobledo1} there is a conjectural formula for each $d>0$ for the set of primes dividing the size of the $E(K)_{\text{tors}}$, where $K$ is a  number field of degree $d$, and $E$ an elliptic curve defined over $\mathbb{Q}$.

Ideally a similar classification as in Theorem \ref{cubecase} may be achieved for all quartic number fields.  This article puts forth the first steps by classifying torsion subgroups over a subset of quartic number fields, namely number fields that are Galois over $\mathbb{Q}$.

\textbf{Acknowledgments} This work was inspired by the paper \cite{najman} by Filip Najman, and the author would like to thank him for his ideas and help throughout the project.  The author would also like to thank \'{A}lvaro Lozano-Robledo for his invaluable guidance and input.  The author sends many thanks to the anonymous referee for their helpful comments and the quick refereeing process.  Lastly, the author would also like to thank Yasutsugu Fujita, Enrique Gonz\'{a}lez-Jim\'{e}nez, Andrew Sutherland, and David Zywina, for offering their advice and pointing the author to helpful references.






\section{Torsion over biquadratic number fields}\label{biquadproof}

Fujita classified the torsion of elliptic curves defined over $\mathbb{Q}$ extended to the maximal elementary 2-abelian extension $L = \mathbb{Q}(\{\sqrt{m} : m \in \mathbb{Z} \})$.

\begin{theorem}[Fujita \cite{fujita}, Theorem 2]\label{fujita}
Let $E$ be an elliptic curve defined over $\mathbb{Q}$.  Let $L := \mathbb{Q}(\{\sqrt{m} : m \in \mathbb{Z} \})$.  Then the torsion subgroup $E(L)_{tors}$ is isomorphic to one of the following 20 groups:
$$
\begin{array}{ll}
\mathbb{Z}/2\mathbb{Z} \oplus \mathbb{Z}/2N\mathbb{Z}, & N = 1, 2, 3, 4, 5, 6, 8, \\
\mathbb{Z}/4\mathbb{Z} \oplus \mathbb{Z}/4N\mathbb{Z}, & N = 1,2,3,4, \\
\mathbb{Z}/2N\mathbb{Z} \oplus \mathbb{Z}/2N\mathbb{Z}, & N = 3,4\\
\end{array}
$$
or $\{0\},\  \mathbb{Z}/3\mathbb{Z},\  \mathbb{Z}/3\mathbb{Z} \oplus \mathbb{Z}/3\mathbb{Z},\  \mathbb{Z}/5\mathbb{Z},\  \mathbb{Z}/7\mathbb{Z},\  \mathbb{Z}/9\mathbb{Z},\  \mathbb{Z}/15\mathbb{Z}$.  Moreover, there exists an elliptic curve $E$ over $\mathbb{Q}$ which realizes each group listed above as $E(L)_{tors}$.
\end{theorem}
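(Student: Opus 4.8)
The plan is to exploit the single structural feature that makes the infinite field $L$ tractable: although $[L:\Q]$ is infinite, $\Gal(L/\Q)$ is an elementary abelian $2$-group, and $E(L)_{\text{tors}} = \bigcup_K E(K)_{\text{tors}}$, the union over finite multiquadratic subfields $K \subseteq L$, is a $\Gal(L/\Q)$-module. I would first record the basic observation that if $P \in E(L)$ has prime order $p$, then the $\Gal(\overline{\Q}/\Q)$-submodule $W \subseteq E[p]$ generated by $P$ is already defined over $\Q$: because $P \in E(L)$, the action of any $\tau \in \Gal(\overline{\Q}/\Q)$ on $P$ depends only on $\tau|_L$, so the orbit of $P$ is a finite $\Gal(\overline{\Q}/\Q)$-stable set whose $\mathbb{F}_p$-span $W$ is therefore $\Q$-rational. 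Since $\dim_{\mathbb{F}_p} E[p] = 2$, either $W = E[p]$ or $W$ is a rational line, and this dichotomy, run prime by prime, is the engine of the whole argument.

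For the prime bound, if $W = E[p]$ then $E[p] \subseteq E(L)$, so $\Q(\zeta_p) \subseteq \Q(E[p]) \subseteq L$ by the Weil pairing; as $\Gal(\Q(\zeta_p)/\Q) \cong (\Z/p\Z)^\times$ is cyclic and must be a quotient of the elementary abelian $\Gal(L/\Q)$, this forces $p \le 3$. If instead $W$ is a rational line, then $E$ admits a rational $p$-isogeny whose character $\chi : \Gal(\overline{\Q}/\Q) \to \mathbb{F}_p^\times$ factors through $\Gal(L/\Q)$, hence $\chi^2 = 1$ and $P$ is defined over a field $\Q(P)$ that is trivial or quadratic. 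Thus $P$ lies in $E(F)_{\text{tors}}$ for a quadratic $F$, and Theorem \ref{quadcase} gives $p \le 7$. Together these show the only primes dividing $|E(L)_{\text{tors}}|$ are $2,3,5,7$.

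For the prime-power bounds, each $E(L)[p^\infty]$ is itself $\Gal(\overline{\Q}/\Q)$-stable; if it had rank $2$ it would contain $E[p]$, forcing $\Q(\zeta_p) \subseteq L$. For $p = 5,7$ this is the same cyclotomic contradiction, so the $p$-part is cyclic, comes from a rational $p^a$-isogeny with quadratic character, is defined over a quadratic field, and Theorem \ref{quadcase} pins it to $\Z/5\Z$ and $\Z/7\Z$. For $p = 3$ the rank-$2$ case is allowed since $\Q(\zeta_3) \subseteq L$, giving $\Z/3\Z \oplus \Z/3\Z$; ruling out $\Z/27\Z$ and $\Z/3\Z \oplus \Z/9\Z$ uses the same isogeny-character reduction to the quadratic case together with the fact that full $E[9]$ would require $\Q(\zeta_9) \not\subseteq L$, leaving $\Z/9\Z$ and $\Z/3\Z \oplus \Z/3\Z$ as the $3$-maxima, and $\Z/15\Z$ as the only mixed odd combination.

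The main obstacle is the $2$-part, and it is genuinely different because $(\Z/2^a\Z)^\times$ is not cyclic: an exponent-$2$ subgroup can have order $4$, so a point of $2$-power order may be defined over a biquadratic (not merely quadratic) subfield of $L$, which is precisely why the $2$-part can reach $\Z/2\Z \oplus \Z/16\Z$ and $\Z/8\Z \oplus \Z/8\Z$, structures that never occur over quadratic fields. Here Theorem \ref{quadcase} no longer suffices, and I would instead analyze the mod-$2^n$ image $\rho_{E,2^n}(\Gal(\overline{\Q}/\Q)) \subseteq \GL_2(\Z/2^n\Z)$ directly: compute $E(L)[2]$ from the factorization type of the $2$-division polynomial, then run a point-halving descent in which the obstruction to halving a point over $L$ is whether the relevant quantities become squares; since $L$ contains every $\sqrt{m}$, only genuinely cohomological obstructions survive. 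Two hard ceilings terminate the process: Mazur's isogeny theorem caps any cyclic factor at $2^4 = 16$, and the Weil pairing forces $\zeta_{2^n}$ into the field of full $2^n$-torsion, so $E[8] \subseteq E(L)$ is possible (as $\Q(\zeta_8) = \Q(i,\sqrt{2}) \subseteq L$) while $E[16]$ is not (since $\Gal(\Q(\zeta_{16})/\Q)$ has a cyclic quotient of order $4$). Assembling the $2$-, $3$-, $5$-, and $7$-parts by the Chinese Remainder Theorem and checking which combinations are Galois-theoretically compatible yields the list. Finally, for the "moreover" clause each listed group must be realized: I would exhibit explicit $E/\Q$ using the modular curves $X_1(N)$ parametrizing the relevant small $N$, together with quadratic twists engineered to manufacture the extra $2$-power torsion over multiquadratic fields — a finite, mechanical verification once the candidate list is fixed. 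I expect the $2$-adic image analysis to be by far the most laborious and error-prone step.
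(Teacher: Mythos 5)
First, a point of context: the paper does not prove this statement at all --- it is imported verbatim from Fujita \cite{fujita} --- so the only internal comparison available is with the techniques the paper uses elsewhere for analogous exclusions. Your skeleton is sound as far as it goes: the observation that $E(L)[p]$ is $\Gal(\overline{\Q}/\Q)$-stable, the Weil-pairing bound forcing $p-1 \le 2$ for full $p$-torsion, and the quadratic-character descent to Theorem \ref{quadcase} do correctly confine the primes to $\{2,3,5,7\}$ and kill $\Z/25\Z$, $\Z/49\Z$, $\Z/27\Z$, and full $5$-, $7$-, and $16$-torsion. The first genuine gap is at $p=3$: your exclusion of $\Z/3\Z \oplus \Z/9\Z$ follows from neither cited tool. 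The Weil pairing only rules out $\Z/9\Z \oplus \Z/9\Z$, and the character argument requires $E(L)[9]$ to be cyclic --- when $E(L)[9] \cong \Z/3\Z \oplus \Z/9\Z$, one has $P^{\sigma} = \chi(\sigma)P + (\text{$3$-torsion})$, the line $\langle P \rangle$ is not Galois-stable, and $\Q(P)$ need not be quadratic; even if it were, $\Z/9\Z$ does occur over quadratic fields by Theorem \ref{quadcase}, so no contradiction arises. The standard repair is the Laska--Lorenz twist decomposition (Lemma \ref{twist}, applied to $L$): $\Z/3\Z \oplus \Z/9\Z \subseteq E(L)$ forces two distinct quadratic twists of $E$ with rational $9$- and $3$-torsion respectively, hence a single rational curve carrying a $9$-isogeny together with an independent $3$-isogeny, which contradicts Kenku's classification of isogeny classes (Theorem \ref{8qisogs}). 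The same decomposition, plus the twist-invariance of the $2$-torsion Galois module, is what actually performs your deferred ``CRT compatibility check'': e.g.\ $\Z/2\Z \oplus \Z/14\Z$ dies because a twist would have rational $14$-torsion, and $\Z/21\Z$, $\Z/3\Z\oplus\Z/15\Z$, and even parts coexisting with $\Z/15\Z$ reduce to finitely many curves on $X_0(21)$, $X_0(15)$, etc., which must be inspected individually --- none of this is supplied by your sketch.

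The second and larger gap is that your two ``hard ceilings'' do not terminate the $2$-descent. From $E(L)[2^{\infty}] \cong \Z/2^{a}\Z \oplus \Z/2^{b}\Z$ the isogeny theorem only yields the Galois-stable cyclic group $2^{a}E(L)[2^{\infty}]$, i.e.\ a rational cyclic $2^{b-a}$-isogeny, so the cyclic factor is \emph{not} capped at $16$: both $\Z/2\Z \oplus \Z/32\Z$ (a $16$-isogeny, no full $E[16]$) and $\Z/8\Z \oplus \Z/16\Z$ (a $2$-isogeny, $E[8] \subseteq E(L)$ being permitted) pass both of your ceilings, yet both are absent from Fujita's list and must be excluded by hand. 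That this is the hard core is visible in the paper itself: Proposition \ref{maximalEKtors} needs an explicit index computation inside $\GL(2,\Z/32\Z)$, the Rouse--Zureick-Brown theorem, and CM tables merely to kill $\Z/2\Z \oplus \Z/32\Z$ over cyclic quartic fields. Fujita's actual route is a systematic point-halving analysis via Lemma \ref{halvingpoint}, exploiting that every rational number is a square in $L$ so that halving obstructions are governed by explicit parametrized families at each stage; your proposal names this machinery and candidly defers it, but the exact cutoffs $\Z/2\Z\oplus\Z/16\Z$, $\Z/4\Z\oplus\Z/16\Z$, $\Z/8\Z\oplus\Z/8\Z$ --- the content distinguishing the theorem's twenty groups from the soft upper bound --- are exactly what that deferred step must produce, so as written the proof is incomplete rather than merely laborious.
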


Certainly, if $E$ is an elliptic curve over $\mathbb{Q}$ and $K = \mathbb{Q}(\sqrt{m},\sqrt{n})$ for some square-free $m,n \in \mathbb{Z}$, then $E(K)_{tors} \leq E(L)_{tors}$, and so this gives the following list of possibilities for $E(K)_{tors}$:

$$
\begin{array}{ll}
\mathbb{Z}/N\mathbb{Z} & N = 1, \ldots , 10, 12, 15, 16.\\
\mathbb{Z}/2\mathbb{Z} \oplus \mathbb{Z}/2N\mathbb{Z}, & N = 1, 2, 3, 4, 5, 6, 8, \\
\mathbb{Z}/4\mathbb{Z} \oplus \mathbb{Z}/4N\mathbb{Z}, & N = 1,2,3,4, \\
\mathbb{Z}/2N\mathbb{Z} \oplus \mathbb{Z}/2N\mathbb{Z}, & N = 3,4,\\
\mathbb{Z}/3\mathbb{Z} \oplus \mathbb{Z}/3\mathbb{Z}, \\
\mathbb{Z}/3\mathbb{Z} \oplus \mathbb{Z}/6\mathbb{Z}.
\end{array}
$$

First we wish to show that any group in Najman's classification in Theorem \ref{quadcase} must also appear over a biquadratic extension of $\mathbb{Q}$ by adjoining a square root that will not add any torsion points.  This is done with the following lemma:

\begin{lemma}\label{biquadaddnotorsion}
Let $E$ be an elliptic curve defined over $\mathbb{Q}$.  Given a quadratic field $F_1 / \mathbb{Q}$, there is a quadratic field $F_2 / \mathbb{Q}$ such that $E(F_1 F_2)_{\text{tors}} \cong E(F_1)_{\text{tors}}$.
\end{lemma}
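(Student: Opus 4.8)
The plan is to find all but finitely many admissible $F_2$ by showing that only finitely many quadratic extensions of $F_1$ can introduce torsion, and then choosing $F_2$ so that the compositum avoids these finitely many ``bad'' extensions. The starting observation is that if $F_2 = \mathbb{Q}(\sqrt{d})$ for a squarefree integer $d$, then $F_1 F_2 = F_1(\sqrt{d})$ is a quadratic extension of $F_1$, of degree $4$ over $\mathbb{Q}$. So passing from $E(F_1)$ to $E(F_1 F_2)$ is exactly base change along a quadratic extension $L = F_1(\sqrt d)$ of $F_1$, and the task is to produce such an $L$ of the special form $F_1(\sqrt d)$ over which no new torsion appears.

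First I would establish the finiteness of torsion-adding extensions. By Merel's uniform boundedness theorem there is a constant $B$ so that no elliptic curve over a number field of degree at most $4$ has a torsion point of order exceeding $B$; in particular every torsion point of $E$ defined over a quadratic extension of $F_1$ (such a field has degree $4$ over $\mathbb{Q}$) lies in the finite group $E[N](\overline{\mathbb{Q}})$, where $N = \operatorname{lcm}(1,\dots,B)$. For each $P \in E[N](\overline{\mathbb{Q}})$ the field $F_1(P)$ has degree $1$ or $2$ over $F_1$, and those of degree $2$ form a finite list $L_1,\dots,L_k$. Now if $L/F_1$ is any quadratic extension with $E(L)_{\text{tors}} \neq E(F_1)_{\text{tors}}$, choose a torsion point $P \in E(L) \setminus E(F_1)$; then $F_1 \subsetneq F_1(P) \subseteq L$ forces $F_1(P) = L$, so $L \in \{L_1,\dots,L_k\}$. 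Hence only finitely many quadratic extensions of $F_1$ enlarge the torsion subgroup.

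It remains to see that the candidate fields $F_1(\sqrt{d})$ are abundant enough to dodge $L_1,\dots,L_k$. As $d$ ranges over squarefree integers, the assignment $d \mapsto F_1(\sqrt d)$ is at most two-to-one: writing $F_1 = \mathbb{Q}(\sqrt m)$, one has $F_1(\sqrt d) = F_1(\sqrt{d'})$ iff $dd' \in (F_1^\ast)^2$, and since the rational squares in $F_1$ are exactly $(\mathbb{Q}^\ast)^2 \cup m(\mathbb{Q}^\ast)^2$, this pins $d'$ down to at most two squarefree values (namely $d$ itself and the squarefree part of $md$). Therefore the fields $F_1(\sqrt d)$ constitute infinitely many distinct quadratic extensions of $F_1$, and I may pick a squarefree $d$ with $\mathbb{Q}(\sqrt d)\neq F_1$ and $F_1(\sqrt d)\notin\{L_1,\dots,L_k\}$. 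Setting $F_2 = \mathbb{Q}(\sqrt d)$ then yields a genuine biquadratic compositum with $E(F_1 F_2)_{\text{tors}} \cong E(F_1)_{\text{tors}}$, as desired.

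The only genuinely nontrivial ingredient is the finiteness in the second step, which rests on the uniform boundedness of torsion over degree-$4$ fields; everything else is elementary field-theoretic bookkeeping. I would note that one could replace Merel's general theorem by the explicit bound coming from the quartic classification of Jeon--Kim--Park cited above (this is harmless here, since that boundedness is not what the present theorems prove), but invoking uniform boundedness keeps the argument self-contained.
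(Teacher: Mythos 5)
Your proof is correct, but it reaches the conclusion by a genuinely different route than the paper. The paper deduces the lemma directly from Fujita's theorem (Theorem~\ref{fujita}): over the maximal elementary abelian $2$-extension $L=\mathbb{Q}(\{\sqrt{m}:m\in\mathbb{Z}\})$ the group $E(L)_{\text{tors}}$ is finite, hence is defined over a single finite extension $M/\mathbb{Q}$, and any squarefree $d$ with $\sqrt{d}\notin M$ works --- no enumeration of ``bad'' quadratic extensions of $F_1$ is needed, because every candidate compositum $F_1(\sqrt{d})$ lies inside $L$ and so can only acquire torsion already visible over $M$. You instead invoke uniform boundedness over quartic fields (Merel) to show that only finitely many quadratic extensions of $F_1$ can enlarge the torsion, and then check that $d\mapsto F_1(\sqrt{d})$ is at most two-to-one on squarefree integers, so these finitely many fields can be avoided; both the finiteness step and the counting step are sound. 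Your finiteness input is a much deeper theorem than Fujita's, but it buys robustness: your argument proves the analogous statement with $F_1$ replaced by an arbitrary number field, whereas the paper's proof is tied to subfields of $L$ and to a result it already needs elsewhere, which is why it comes out shorter. One phrasing slip worth fixing: it is not true that $F_1(P)$ has degree $1$ or $2$ over $F_1$ for \emph{every} $P\in E[N](\overline{\mathbb{Q}})$; your argument only ever uses the (automatically finite) sublist of those $P$ for which the degree is exactly $2$, so the conclusion is unaffected, but the sentence should be restated that way.
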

\begin{proof}
Let $E$ be an elliptic curve defined over $\mathbb{Q}$, and let $F_1$ be a quadratic field.  Let $L$ denote the maximal elementary 2-abelian extension of $\mathbb{Q}$.  Then, $E(F_1)_{\text{tors}} \subseteq E(L)_{\text{tors}}$.  Let $M$ be the field of definition of all the points in $E(L)_{\text{tors}}$.  Since by Theorem \ref{fujita} there are only finitely many points in $E(L)_{\text{tors}}$, it follows that $M$ is a finite extension of $\mathbb{Q}$ with $E(M)_{\text{tors}} \cong E(L)_{\text{tors}}$.  Pick a $d \in \mathbb{Z}$ such that $\sqrt{d} \not\in M$.  Then
$$E(L)_{\text{tors}} \cong E(M)_{\text{tors}} \cong E(M(\sqrt{d}))_{\text{tors}},$$
that is, no torsion points are gained by adjoining $\sqrt{d}$.  Thus, setting $F_2 = \mathbb{Q}(\sqrt{d})$, we have that $E(F_1)_{\text{tors}} \cong E(F_1 F_2)_{\text{tors}}$.
\end{proof}

This leaves us with six cases to verify:

$$
\begin{array}{ll}
\mathbb{Z}/2\mathbb{Z} \oplus \mathbb{Z}/16\mathbb{Z} &\\
\mathbb{Z}/4\mathbb{Z} \oplus \mathbb{Z}/4N\mathbb{Z}, & N = 2,3,4, \\
\mathbb{Z}/6\mathbb{Z} \oplus \mathbb{Z}/6\mathbb{Z}, & \\
\mathbb{Z}/8\mathbb{Z} \oplus \mathbb{Z}/8\mathbb{Z}.
\end{array}
$$

In \cite{jeonkimlee} Theorem 4.10 and Theorem 4.11, Jeon, Kim, Lee construct an infinite family of curves defined over $\mathbb{Q}$ such that $E(K)_{tors} \cong \mathbb{Z}/4\mathbb{Z} \oplus \mathbb{Z}/8\mathbb{Z}$ and $E(K)_{tors} \cong \mathbb{Z}/6\mathbb{Z} \oplus \mathbb{Z}/6\mathbb{Z}$ for biquadratic fields $K$ (see section \ref{examples} for infinite families of such curves).  Further, in Fujita's paper \cite{fujita} he gives an example of a curve $E : y^2 = x(x^2-47x+16^3)$ such that $E(L)_{tors} \cong \mathbb{Z}/2\mathbb{Z} \oplus \mathbb{Z}/16\mathbb{Z}$, but in fact already over a biquadratic extension $K=\mathbb{Q}(\sqrt{-7},\sqrt{-15})$, we have $E(K)_{tors} \cong \mathbb{Z}/2\mathbb{Z} \oplus \mathbb{Z}/16\mathbb{Z}$.

Thus, we need only to verify three cases:

$$\mathbb{Z}/4\mathbb{Z} \oplus \mathbb{Z}/12\mathbb{Z} ,\;  \mathbb{Z}/4\mathbb{Z} \oplus \mathbb{Z}/16\mathbb{Z},\;  \mathbb{Z}/8\mathbb{Z} \oplus \mathbb{Z}/8\mathbb{Z}.$$

\begin{proposition}
Let $E$ be an elliptic curve defined over $\mathbb{Q}$, and let $K$ be a biquadratic number field.  Then, $E(K)_{\text{tors}}$ is not isomorphic to $\mathbb{Z}/8\mathbb{Z} \oplus \mathbb{Z}/8\mathbb{Z}$.
\end{proposition}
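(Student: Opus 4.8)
The plan is to first use the Weil pairing to pin down $K$ exactly, and then to reduce the statement to a question about the image of the mod-$8$ Galois representation.

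If $E(K)_{\text{tors}}\cong\Z/8\Z\oplus\Z/8\Z$ then $E[8]\subseteq E(K)$. Since the Weil pairing $e_8\colon E[8]\times E[8]\to\mu_8$ is Galois-equivariant and surjective, having all of $E[8]$ rational over $K$ forces $\mu_8\subseteq K$, i.e. $\zeta_8\in K$. As $[\Q(\zeta_8):\Q]=4=[K:\Q]$ and $\Q(\zeta_8)=\Q(i,\sqrt2)$ is itself biquadratic, we conclude $K=\Q(\zeta_8)$. In particular $\Q(E[8])\subseteq K$ has degree dividing $4$; since $\det\circ\rho_{E,8}$ is the mod-$8$ cyclotomic character, whose image is all of $(\Z/8\Z)^\times$ (of order $4$), the image $G:=\image\rho_{E,8}\subseteq\GL_2(\Z/8\Z)$ must have order exactly $4$, with $\det\colon G\xrightarrow{\sim}(\Z/8\Z)^\times$. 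Thus $G\cong\Z/2\Z\oplus\Z/2\Z$, every element is an involution, and $G$ contains one element $\sigma_d$ of each determinant $d\in\{3,5,7\}$.

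Next I would extract arithmetic constraints on $G$ from the three quadratic subfields $\Q(i),\Q(\sqrt2),\Q(\sqrt{-2})$ of $K$ and from quadratic twists. For each subfield $F=\Q(\sqrt m)$ (with $m\in\{-1,2,-2\}$) let $\sigma_F$ generate $\Gal(K/F)$; then $E(F)[8]=\ker(\sigma_F-1)$, while the quadratic twist $E^{(m)}$, which is again defined over $\Q$ and becomes isomorphic to $E$ over $K$, satisfies $E^{(m)}(F)[8]\cong\ker(\sigma_F+1)$. Since $E$ and $E^{(m)}$ are rational curves and $F$ is a quadratic field, all six groups $\ker(\sigma_F\pm1)$ must occur in Najman's classification of Theorem~\ref{quadcase}; in particular none of them may be isomorphic to $\Z/4\Z\oplus\Z/8\Z$. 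Complex conjugation is the element $\sigma_7$ of determinant $-1$, so $\ker(\sigma_7\pm1)$ are the real-point groups $E(\R)[8]$ and $E^{(-1)}(\R)[8]$, each cyclic or of the form $\Z/2\Z\oplus\Z/8\Z$.

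The main obstacle is that these conditions, although restrictive, are not by themselves contradictory: one can exhibit abstract subgroups $G\cong\Z/2\Z\oplus\Z/2\Z$ of $\GL_2(\Z/8\Z)$ with $\det$ an isomorphism onto $(\Z/8\Z)^\times$ for which every $\ker(\sigma_F\pm1)$ lies in Najman's list (and for which the four ``isotypic'' pieces $E^{(m)}(\Q)[8]$ even satisfy Mazur's theorem). Hence the real content is to show that no such $G$ is actually realized as $\image\rho_{E,8}$ for a rational elliptic curve. This I would settle using the classification of the possible $2$-adic images of Galois for elliptic curves over $\Q$: an image of order $4$ in $\GL_2(\Z/8\Z)$ has far larger index than is permitted for a non-CM curve, so $E$ would be forced to have complex multiplication; one then checks directly that for each of the finitely many $\Q$-curves with CM the mod-$8$ image (contained in the normalizer of a Cartan subgroup) has order greater than $4$, so $E[8]\subseteq E(\Q(\zeta_8))$ is impossible. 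This final realizability step --- rather than the elementary Weil pairing and twisting arguments --- is where the actual difficulty lies.
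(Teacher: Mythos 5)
Your argument is essentially correct but takes a genuinely different route from the paper. The paper disposes of this proposition by quoting Fujita: his Proposition 11 shows $\mathbb{Z}/8\mathbb{Z}\oplus\mathbb{Z}/8\mathbb{Z}\not\subseteq E(L)_{\text{tors}}$ when $E(\Q)_{\text{tors}}$ is cyclic (where $L$ is the maximal elementary $2$-abelian extension, which contains every biquadratic $K$), and his closing remarks show that in the non-cyclic case full $8$-torsion first appears in degree $\geq 16$. Your proof instead pins down $K=\Q(\zeta_8)$ via the Weil pairing, deduces that $\image\rho_{E,8}$ would have order exactly $4$ (since $\det$ is the surjective mod-$8$ cyclotomic character), and then kills the non-CM case by the Rouse--Zureick-Brown index bound: an order-$4$ image has index $1536/4=384$ in $\GL_2(\Z/8\Z)$, whereas the mod-$8$ index divides the $2$-adic index, which divides $64$ or $96$. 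This is clean and is in fact the same technique the paper uses elsewhere (to rule out $\Z/2\Z\oplus\Z/32\Z$ in Proposition \ref{maximalEKtors}), so it buys a self-contained argument independent of Fujita; your middle paragraph on twists and Najman's list is, as you yourself note, not load-bearing and could be cut.

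The one soft spot is the CM case, which you only sketch, and the phrase ``the finitely many $\Q$-curves with CM'' is not quite right: there are finitely many CM $j$-invariants over $\Q$ but infinitely many twists, and the order of the mod-$8$ image is not a twist invariant, so ``check each curve directly'' is not literally a finite computation as stated. You can close this either by the route the paper uses for its CM cases --- cite the classification of torsion of CM elliptic curves over quartic fields in \cite{cmtorsion}, whose list does not contain $\Z/8\Z\oplus\Z/8\Z$ --- or by sharpening your own argument: since $E[3]\subseteq E[n]$ forces nothing here, use instead that for a CM curve the CM field lies in $\Q(E[8])\subseteq\Q(\zeta_8)$, so the CM field is $\Q(i)$ or $\Q(\sqrt{-2})$, reducing to $j=1728$ and $j=8000$, where one verifies $[\Q(E[8]):\Q]>4$ for all twists. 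With that repaired, the proof is complete.
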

\begin{proof}
Let $L$ denote the maximal elementary 2-abelian extension of $\mathbb{Q}$.  Suppose $E(K)_{tors} \cong \mathbb{Z}/8\mathbb{Z} \oplus \mathbb{Z}/8\mathbb{Z}$.  From \cite{fujita}, Proposition 11, if $E(\mathbb{Q})_{\text{tors}}$ is cyclic, then we have $E(L)_{\text{tors}} \not\supseteq \mathbb{Z}/8\mathbb{Z} \oplus \mathbb{Z}/8\mathbb{Z}$.  Since $E(K)_{\text{tors}} \subseteq E(L)_{\text{tors}}$, this implies that if $E(\mathbb{Q})_{\text{tors}}$ is cyclic, then $E(K)_{\text{tors}} \not\supseteq \mathbb{Z}/8\mathbb{Z} \oplus \mathbb{Z}/8\mathbb{Z}$.  
Thus, we may assume that $E(\mathbb{Q})_{\text{tors}}$ is not cyclic.  From \cite{fujita} in his closing remarks shows that $\mathbb{Z}/8\mathbb{Z} \oplus \mathbb{Z}/8\mathbb{Z}$ occurs in number fields of degree 16 or greater if $E$ is an elliptic curve over $\mathbb{Q}$ with non-cyclic torsion over $\mathbb{Q}$.
\end{proof}

Finally Najman and Bruin show in \cite{bruinnajman} that $\mathbb{Z}/4\mathbb{Z} \oplus \mathbb{Z}/12\mathbb{Z}$ and  $\mathbb{Z}/4\mathbb{Z} \oplus \mathbb{Z}/16\mathbb{Z}$ do not appear as torsion subgroups for any elliptic curves over any quartic number fields $K$ (not just elliptic curves defined over $\mathbb{Q}$).

\begin{thm}[Bruin, Najman, \cite{bruinnajman}, Theorem 7]
The following groups do not occur as subgroups of elliptic curves over quartic fields:
\begin{align*}
&\mathbb{Z}/3\mathbb{Z} \oplus \mathbb{Z}/12\mathbb{Z}, \;\; \mathbb{Z}/3\mathbb{Z} \oplus \mathbb{Z}/18\mathbb{Z}, \;\; \mathbb{Z}/3\mathbb{Z} \oplus \mathbb{Z}/27\mathbb{Z},\\
&\mathbb{Z}/3\mathbb{Z} \oplus \mathbb{Z}/33\mathbb{Z}, \;\; \mathbb{Z}/3\mathbb{Z} \oplus \mathbb{Z}/39\mathbb{Z}, \;\; \mathbb{Z}/4\mathbb{Z} \oplus \mathbb{Z}/12\mathbb{Z}, \\
&\mathbb{Z}/4\mathbb{Z} \oplus \mathbb{Z}/16\mathbb{Z}, \;\; \mathbb{Z}/4\mathbb{Z} \oplus \mathbb{Z}/28\mathbb{Z}, \;\; \mathbb{Z}/4\mathbb{Z} \oplus \mathbb{Z}/44\mathbb{Z},\\
&\mathbb{Z}/4\mathbb{Z} \oplus \mathbb{Z}/52\mathbb{Z}, \;\; \mathbb{Z}/4\mathbb{Z} \oplus \mathbb{Z}/68\mathbb{Z}, \;\; \mathbb{Z}/8\mathbb{Z} \oplus \mathbb{Z}/8\mathbb{Z}.
\end{align*}
\end{thm}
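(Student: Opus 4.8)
The plan is to translate each non-occurrence statement into the absence of a low-degree point on a modular curve, and to rule such points out first in infinite families and then for the finitely many remaining sporadic candidates. A subgroup isomorphic to $G = \mathbb{Z}/m\mathbb{Z} \oplus \mathbb{Z}/mn\mathbb{Z}$ embeds into $E(K)_{\text{tors}}$ for some elliptic curve $E$ over a quartic field $K$ exactly when the modular curve $X := X_1(m,mn)$, whose non-cuspidal points parametrize pairs $(E,\iota)$ with $\iota\colon G \hookrightarrow E$, carries a non-cuspidal point of residue degree dividing $4$. Each of the twelve groups contains the full $m$-torsion $\mathbb{Z}/m\mathbb{Z} \oplus \mathbb{Z}/m\mathbb{Z}$ with $m \in \{3,4,8\}$, so by the Weil pairing any such $K$ contains $\zeta_m$, and the field of moduli of $(E,\iota)$ contains $F := \mathbb{Q}(\zeta_m)$. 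For $\mathbb{Z}/8\mathbb{Z}\oplus\mathbb{Z}/8\mathbb{Z}$ this pins down $K$ entirely: since $[\mathbb{Q}(\zeta_8):\mathbb{Q}]=4$ we must have $K = \mathbb{Q}(\zeta_8)$, so the claim reduces to whether $X_1(8,8)$ has a non-cuspidal point rational over the single field $\mathbb{Q}(\zeta_8)$; in the case treated in this paper, where $E$ is defined over $\mathbb{Q}$ and $\mathbb{Q}(\zeta_8)=\mathbb{Q}(i,\sqrt{2})$ is biquadratic, this is already covered by the Proposition above. For the six $\mathbb{Z}/4\mathbb{Z}$-families one has $F = \mathbb{Q}(i)$ and for the five $\mathbb{Z}/3\mathbb{Z}$-families $F = \mathbb{Q}(\zeta_3)$; in every such case the residue field of the sought point lies between $F$ and a quartic $K$, hence has degree at most $2$ over $F$, so the problem descends to deciding whether $X$ has a non-cuspidal point of degree at most $2$ over $F$.

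For the eleven remaining curves I would next compute the genus and an explicit model of $X$ over $F$. Any curve of genus $0$ or $1$ is dispatched directly, through its $F$-rational function field or through the Mordell--Weil group over $F$ of the associated elliptic curve. For the curves of genus at least $2$ I would apply the theorem of Harris--Silverman: a curve of genus at least $2$ has infinitely many quadratic points over $F$ only if it is hyperelliptic or bielliptic with an elliptic quotient of positive Mordell--Weil rank over $F$ (in line with Frey's bound, which forces the $F$-gonality of a curve with infinitely many degree-$2$ points to be at most $4$). Verifying that each $X$ is neither hyperelliptic nor bielliptic over $F$, or that each of its bielliptic quotients has rank $0$ over $F$, then excludes the existence of infinitely many admissible quadratic points and reduces the problem to a finite set of candidate points.

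It remains to eliminate these finitely many sporadic candidates, and this is where the main work lies. Here I would pass to the Jacobian $J = \operatorname{Jac}(X)$, compute $J(F)$, and run a symmetric Chabauty argument or a Mordell--Weil sieve on $\operatorname{Sym}^2 X$ over $F$: a point of degree at most $2$ determines an effective divisor of degree $2$ whose Abel--Jacobi image lies in $J(F)$, and reducing modulo several primes of good reduction, enumerating the effective degree-$2$ divisors on the reductions, and intersecting the resulting congruence conditions forces any such divisor to be supported on the cusps, so that $X$ has no non-cuspidal point of degree at most $2$ over $F$. The principal obstacle is exactly this last step: producing provably correct defining equations and Mordell--Weil data for the Jacobians of these comparatively high-genus modular curves, and making the symmetric Chabauty or sieve computation conclusive in the cases where the rank of $J(F)$ is as large as the genus, so that ordinary Chabauty gives no information and one must instead use a well-chosen quotient, or relative symmetric Chabauty, exploiting the extra structure supplied by the fixed subfield $F$.
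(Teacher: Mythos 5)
First, a point of order: the paper does not prove this statement at all --- it is quoted verbatim from Bruin and Najman (\cite{bruinnajman}, Theorem 7) and used as a black box, so there is no internal proof to compare yours against. Measured against the actual proof in that reference, your outline is essentially the right route and close to what Bruin and Najman do: the Weil pairing forces $\zeta_m \in K$, the problem becomes one about non-cuspidal points of degree at most $4/\varphi(m)$ on $X_1(m,mn)$ over $F=\mathbb{Q}(\zeta_m)$, and these are eliminated by exhibiting a quotient of the Jacobian with finite Mordell--Weil group over $F$ and running a reduction/sieve argument on symmetric powers to show every admissible effective divisor is supported on the cusps. (When the relevant abelian variety has rank $0$ over $F$, the Harris--Silverman step you interpose is not really needed: finiteness of the Abel--Jacobi image of $\operatorname{Sym}^2 X$ already precludes infinite families unless $X$ admits an $F$-rational $g^1_2$, which is checked alongside.)

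That said, as written your text is a plan rather than a proof: every one of the nontrivial cases is deferred to computations (models, genera, Mordell--Weil data, Chabauty or sieve runs) that you acknowledge but do not carry out, and you yourself flag that the final step may be inconclusive when the rank is large. There is also one concrete error. For $\mathbb{Z}/8\mathbb{Z}\oplus\mathbb{Z}/8\mathbb{Z}$ you declare the case ``already covered by the Proposition above,'' but that proposition (and the Fujita results it rests on) concerns only elliptic curves defined over $\mathbb{Q}$, whereas the theorem asserts non-occurrence for arbitrary elliptic curves over quartic fields; you correctly reduce to $K=\mathbb{Q}(\zeta_8)$, but you still owe an argument that $X_1(8,8)$ has no non-cuspidal $\mathbb{Q}(\zeta_8)$-rational points. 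So the proposal identifies the correct strategy, and matches the methodology of the cited source, but it does not yet constitute a proof of the theorem as stated.
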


\section{Auxillary Results}\label{auxresults}

In this section we show a series of results needed to in the proof of Theorem \ref{cyclicquartic}.  We begin by proving a lemma for determining when a quartic extension is cyclic.

\begin{lemma}\label{criteriaforcyclic}  Let $K$ be a quartic Galois number field. Then $K$ can be written in the form $K = \mathbb{Q}(\sqrt{m})(\sqrt{\alpha})$ for some square free $m \in \mathbb{Q}$ and some $\alpha \in \mathbb{Q}(\sqrt{m})$.  Writing $\alpha = a + b\sqrt{m}$ for some $a,b \in \mathbb{Q}$, we have the following:
$$
\Gal(K/\mathbb{Q}) \cong \mathbb{Z}/4\mathbb{Z} \Leftrightarrow \frac{a^2}{m} - b^2=  1  \mbox{ for some }  a \neq 0, b \neq 0
$$
\end{lemma}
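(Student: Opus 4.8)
The plan is to pass to a quadratic subfield and then read off the order of a lift of its nontrivial automorphism. Since $\Gal(K/\mathbb{Q})$ has order $4$, it contains a subgroup of index $2$, so by the Galois correspondence $K$ contains a quadratic field $F = \mathbb{Q}(\sqrt{m})$ with $m \in \mathbb{Q}$ squarefree; as $[K:F]=2$ we may write $K = F(\sqrt{\alpha})$ with $\alpha = a + b\sqrt{m}$ and $\alpha \notin (F^{\times})^2$. Let $\sigma$ be the nontrivial element of $\Gal(F/\mathbb{Q})$, so $\sigma(\sqrt m) = -\sqrt m$ and $\sigma(\alpha) = a - b\sqrt m$. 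This first step is routine and records the representation asserted in the lemma.

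Next I would make the Galois hypothesis explicit. The $\mathbb{Q}$-conjugates of $\sqrt{\alpha}$ are $\pm\sqrt{\alpha},\ \pm\sqrt{\sigma(\alpha)}$, so $K/\mathbb{Q}$ is Galois exactly when $\sqrt{\sigma(\alpha)} \in K$. Since $\sqrt{\alpha}\,\sqrt{\sigma(\alpha)} = \sqrt{N_{F/\mathbb{Q}}(\alpha)}$ with $N_{F/\mathbb{Q}}(\alpha) = a^2 - b^2 m \in \mathbb{Q}^{\times}$, this is equivalent to $w := \sqrt{a^2 - b^2 m} \in K$; because $K$ is assumed Galois we have $w \in K$.

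The heart of the argument is to compute the order of the automorphism $\tau$ of $K$ lifting $\sigma$, defined by $\tau(\sqrt m) = -\sqrt m$ and $\tau(\sqrt\alpha) = \sqrt{\sigma(\alpha)} = w/\sqrt\alpha$ (choosing square roots so that $\sqrt\alpha\,\sqrt{\sigma(\alpha)} = w$). A direct computation gives $\tau^2(\sqrt\alpha) = (\tau(w)/w)\,\sqrt\alpha$, and since $w^2 = a^2-b^2m \in \mathbb{Q}$ we have $\tau(w)=\pm w$; thus $\tau$ has order $4$ iff $\tau(w)=-w$ and order $2$ iff $\tau(w)=w$. As $\Gal(K/\mathbb{Q})$ has order $4$, it is cyclic precisely when this $\tau$ has order $4$. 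Now the three cases for $w \in K$ with $w^2 \in \mathbb{Q}$ are exhaustive: if $w \in \mathbb{Q}$ then $\tau(w)=w$ (order $2$); if $w \notin F$ then $K$ contains the two distinct quadratic fields $F$ and $\mathbb{Q}(w)$, forcing the Klein four-group; and if $w \in \mathbb{Q}\sqrt m \setminus\{0\}$ then $\tau(w)=\sigma(w)=-w$ (order $4$). Hence $\Gal(K/\mathbb{Q}) \cong \mathbb{Z}/4\mathbb{Z}$ iff $w \in \mathbb{Q}\sqrt m\setminus\{0\}$, i.e. iff $a^2 - b^2 m = m t^2$ for some $t \in \mathbb{Q}^{\times}$, equivalently iff $\tfrac{a^2}{m} - b^2$ is a nonzero rational square.

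The remaining point is to match this with the normalized equation $\tfrac{a^2}{m} - b^2 = 1$. Replacing $\sqrt\alpha$ by $\sqrt{\beta^2\alpha}$ with $\beta \in F^{\times}$ does not change $K$ and multiplies the invariant $\tfrac{a^2}{m} - b^2 = N_{F/\mathbb{Q}}(\alpha)/m$ by the rational square $N_{F/\mathbb{Q}}(\beta)^2$; one then wants to choose $\beta$ so that the nonzero square becomes exactly $1$, and $a,b$ are both nonzero since otherwise $w$ would lie in $\mathbb{Q}$ or $K$ would be biquadratic. I expect this normalization to be the main obstacle: rescaling an arbitrary nonzero square $t^2$ to $1$ within a fixed field $K$ requires $t$ (or $t^{-1}$) to be a norm from $F$, which is not automatic, so the cleanest rigorous reading of the displayed equivalence is ``$\tfrac{a^2}{m} - b^2$ is a nonzero square in $\mathbb{Q}$,'' with the value $1$ serving as the representative normalization. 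Everything preceding this step is a direct Kummer-theoretic computation; the care needed lies in the transformation law for $\tfrac{a^2}{m}-b^2$ under change of generator and in the passage to the exact constant $1$.
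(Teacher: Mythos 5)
Your argument follows the same route as the paper's --- realize $K$ as $F(\sqrt{\alpha})$ with $F=\mathbb{Q}(\sqrt m)$, observe that being Galois is equivalent to $\sqrt{\sigma(\alpha)}\in K$, and decide cyclicity by computing the order of the lift $\tau$ of $\sigma$ --- but you carry it out more carefully, and the ``obstacle'' you flag at the end is a genuine defect in the lemma as stated, not a gap in your proof. Your criterion, that $\Gal(K/\mathbb{Q})\cong\mathbb{Z}/4\mathbb{Z}$ if and only if $\tfrac{a^2}{m}-b^2=N_{F/\mathbb{Q}}(\alpha)/m$ is a nonzero rational square, is correct and agrees with the paper's intermediate conclusion $a^2-b^2m=e^2m$. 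The paper's final step, ``dividing through by $e^2$ and absorbing it into $a^2$ and $b^2$,'' replaces $\alpha$ by $\alpha/e$, which generates $F(\sqrt{e\alpha})$ rather than $F(\sqrt{\alpha})$, so it silently changes the field; as you say, normalizing the nonzero square to $1$ within a \emph{fixed} $K$ requires $\pm e$ to lie in $N_{F/\mathbb{Q}}(F^{\times})$, and this can fail. A concrete witness: $K=\mathbb{Q}\bigl(\sqrt{5+\sqrt 5}\bigr)$ is cyclic quartic with $m=5$ and $N_{F/\mathbb{Q}}(5+\sqrt 5)/5=4$; any other generator $\beta^{2}(5+\sqrt 5)$ has invariant $4N_{F/\mathbb{Q}}(\beta)^{2}$, and making this equal to $1$ would force $\pm 2\in N_{F/\mathbb{Q}}(F^{\times})$, which is impossible because $2$ and $-2$ are nonsquares modulo $5$ and hence are not local norms at the ramified prime $5$. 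So no representation of this $K$ satisfies $\tfrac{a^2}{m}-b^2=1$ exactly, and your weakened form of the displayed equivalence is the statement that is actually true. It still suffices for the qualitative uses of the lemma (Lemma \ref{quadtotallyreal} only needs $\tfrac{a^2}{m}-b^2>0$), but the exact constant $1$ is invoked in the computations of Lemma \ref{no20} and in Case 2 of the order-$24$ argument, which would need to be rerun with the condition ``$\tfrac{a^2}{m}-b^2$ is a nonzero rational square'' in its place.
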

\begin{proof}
Notice that if $b = 0$ then $K$ is a biquadratic extension of $\mathbb{Q}$ and so $\Gal(K / \mathbb{Q}) \cong V_4$.  If $a=0$, then $K = \mathbb{Q}(\sqrt[4]{m})$ which is not a Galois extension of $\mathbb{Q}$.
Suppose $\Gal(K/ \mathbb{Q}) \cong \mathbb{Z}/4\mathbb{Z}$, i.e. $K$ is cyclic Galois over $\mathbb{Q}$.  Then the field $K$ must contain both $\sqrt{a+b\sqrt{m}}$ and $\sqrt{a-b\sqrt{m}}$ since they are conjugates.  For ease of notation we will call $\sqrt{a+b\sqrt{m}} = \beta$ and $\sqrt{a-b\sqrt{m}} = \overline{\beta}$. The other two conjugates differ only by a negative sign so we may claim that $K$ is a Galois extension if and only if $\overline{\beta} \in K$.\\
So we have that $K$ is Galois if and only if $\mathbb{Q}(\beta)=\mathbb{Q}(\overline{\beta})$.  This holds if and only if
$$ \sqrt{a+b\sqrt{m}} = c^2 \sqrt{a-b\sqrt{m}} $$
for some $c \in \mathbb{Q}(\sqrt{m})$, i.e.
$$ \frac{\sqrt{a+b\sqrt{m}}}{\sqrt{a-b\sqrt{m}}} = \frac{(a+b\sqrt{m})^2}{a^2-b^2 m} = c^2 $$
so if and only if $a^2 - b^2 m$ is a square in $\mathbb{Q}(\sqrt{m})$.
Writing this out we have:
$$a^2 - b^2 m = (d + e\sqrt{m})^2 = d^2 + 2ed\sqrt{m} + e^2m$$
for some $d,e \in \mathbb{Q}$. Either $d=0$ or $e=0$.  If $e=0$, then we have $a^2 - b^2 m = d^2$.  Then $\beta \overline{\beta} = \sqrt{a^2 - b^2 m} = d$, and some $\overline{\beta} = \frac{d}{\beta}$.  However, then any $\sigma$ in the Galois group of $K$ has order at most 2, since either $\sigma$ sends $\beta$ to $-\beta$ in which case it is clear that $\sigma$ has order two, or $\sigma$ sends $\beta$ to $\overline{\beta}$ in which case
$$\sigma^{2}(\beta) = \sigma(\overline{\beta}) = \sigma\left(\frac{d}{\beta}\right) = \frac{d}{\sigma{\beta}} = \beta.$$
Thus $K$ is Galois cyclic if and only if $a^2 - b^2 m=  e^2 m \mbox{ for some } e \in \mathbb{Q}, a \neq 0, b \neq 0$.

Dividing through by $e^2$ and absorbing it into $a^2$ and $b^2$ we get a nicer criterion: $$\frac{a^2}{m} - b^2 = 1.$$

\end{proof}

We use this to prove the next lemma, which in turn limits the amount of $n$-torsion that appears in $E(K)$.

\begin{lemma}\label{quadtotallyreal}
Let $K$ be a cyclic quartic extension of $\mathbb{Q}$.  Let $F$ be the (unique) intermediate quadratic subfield of $K$.  Then, $F$ is totally real.
\end{lemma}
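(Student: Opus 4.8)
The plan is to read off the desired conclusion directly from the explicit cyclicity criterion proved in Lemma \ref{criteriaforcyclic}. First I would invoke that lemma to put $K$ in the normal form $K = \mathbb{Q}(\sqrt{m})(\sqrt{\alpha})$ with $\alpha = a + b\sqrt{m}$ for square-free $m \in \mathbb{Q}$ and $a,b \in \mathbb{Q}$. Since $\Gal(K/\mathbb{Q}) \cong \mathbb{Z}/4\mathbb{Z}$ has a unique subgroup of order two, $K$ contains a unique intermediate field of degree two over $\mathbb{Q}$; as $\mathbb{Q}(\sqrt{m}) \subseteq K$ is such a field, it must coincide with the field $F$ in the statement. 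Thus the entire problem reduces to showing $m > 0$.

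To that end I would apply the equivalence from Lemma \ref{criteriaforcyclic}: because $K$ is cyclic, we have $\tfrac{a^2}{m} - b^2 = 1$ with $a \neq 0$ and $b \neq 0$. Rearranging gives $\tfrac{a^2}{m} = 1 + b^2$, and the right-hand side is strictly positive (in fact $> 1$, since $b \neq 0$). As $a \neq 0$ forces $a^2 > 0$, we conclude $m = \tfrac{a^2}{1+b^2} > 0$. Hence $F = \mathbb{Q}(\sqrt{m})$ is a real quadratic field, and any quadratic field contained in $\mathbb{R}$ is automatically totally real, which finishes the argument.

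I expect no genuine obstacle here: the substantive work has already been done in establishing the arithmetic criterion of Lemma \ref{criteriaforcyclic}, and the positivity of $m$ then falls out of a one-line manipulation. The only point deserving a moment of care is the identification of $F$ with the particular $\mathbb{Q}(\sqrt{m})$ occurring in the normal form—one must invoke the uniqueness of the index-two subgroup of the cyclic Galois group to be sure that the quadratic field appearing in Lemma \ref{criteriaforcyclic} is exactly the field $F$ named in the statement, rather than merely some quadratic subfield.
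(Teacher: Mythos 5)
Your proof is correct and follows essentially the same route as the paper: both arguments read off the positivity of $m$ from the criterion $\tfrac{a^2}{m} - b^2 = 1$ of Lemma \ref{criteriaforcyclic} (the paper phrases it as the non-existence of real solutions when $m<0$, you phrase it as $m = a^2/(1+b^2) > 0$, which is the same observation). Your extra remark identifying $F$ with $\mathbb{Q}(\sqrt{m})$ via the uniqueness of the index-two subgroup of a cyclic group of order four is a point the paper leaves implicit, and it is a worthwhile clarification.
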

\begin{proof}
From above, if $F = \mathbb{Q}(\sqrt{m})$, then a quadratic extension of $F$, say $F(\sqrt{a+b\sqrt{m}})$ for some $a,b \in \mathbb{Q}$, is quartic cyclic if and only if $\frac{a^2}{m} - b^2 = 1$.  Suppose $m < 0$, then there are no solutions over the reals to $\frac{a^2}{m}-b^2 = 1$, and so there is no quadratic extension of $F$ that is a cyclic quartic number field.  Therefore, if $F$ is contained in a cyclic quartic extension, $F = \mathbb{Q}(\sqrt{m})$ for some $m >0$, i.e. $F$ is totally real.
\end{proof}

\begin{lemma}\label{fullntorsion}
Let $K$ be a cyclic quartic extension of $\mathbb{Q}$, and let $E$ be an elliptic curve over $\mathbb{Q}$.  If $E(K)_{\text{tors}} \cong \mathbb{Z}/n\mathbb{Z} \oplus \mathbb{Z}/n\mathbb{Z}$, then $n = 1, 2, 5, \text{ or } 10$.
\end{lemma}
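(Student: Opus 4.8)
The plan is to use the Weil pairing to force a cyclotomic field inside $K$, and then to apply the two structural constraints we have on a cyclic quartic $K$: it has degree $4$, and (by Lemma \ref{quadtotallyreal}) its unique quadratic subfield is totally real. First I would note that $E(K)_{\text{tors}} \cong \mathbb{Z}/n\mathbb{Z} \oplus \mathbb{Z}/n\mathbb{Z}$ means $E[n] \subseteq E(K)$, i.e. the full $n$-torsion field $\mathbb{Q}(E[n])$ is contained in $K$. The nondegeneracy of the Weil pairing $e_n \colon E[n] \times E[n] \to \mu_n$ then gives $\mu_n \subseteq K$, so $\mathbb{Q}(\zeta_n) \subseteq K$ and hence $\varphi(n) = [\mathbb{Q}(\zeta_n):\mathbb{Q}]$ divides $[K:\mathbb{Q}] = 4$. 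This restricts us to $\varphi(n) \in \{1,2,4\}$, that is $n \in \{1,2\} \cup \{3,4,6\} \cup \{5,8,10,12\}$.

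Next I would dispose of the cases $\varphi(n) = 2$, namely $n = 3, 4, 6$. Here $\mathbb{Q}(\zeta_n)$ is an imaginary quadratic field ($\mathbb{Q}(\sqrt{-3})$ when $n = 3$ or $6$, and $\mathbb{Q}(\sqrt{-1})$ when $n = 4$), and it sits inside $K$. Since $K$ is cyclic quartic, $\Gal(K/\mathbb{Q}) \cong \mathbb{Z}/4\mathbb{Z}$ has a unique subgroup of order $2$, so $K$ has a unique quadratic subfield $F$; this forces $\mathbb{Q}(\zeta_n) = F$. But Lemma \ref{quadtotallyreal} says $F$ is totally real, whereas $\mathbb{Q}(\zeta_n)$ is imaginary — a contradiction. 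Hence $n = 3, 4, 6$ cannot occur.

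Then I would treat $\varphi(n) = 4$, namely $n = 5, 8, 10, 12$. In each case $[\mathbb{Q}(\zeta_n):\mathbb{Q}] = 4 = [K:\mathbb{Q}]$, so the inclusion $\mathbb{Q}(\zeta_n) \subseteq K$ is an equality $K = \mathbb{Q}(\zeta_n)$, and therefore $\Gal(K/\mathbb{Q}) \cong (\mathbb{Z}/n\mathbb{Z})^\ast$ must be cyclic of order $4$. For $n = 5$ and $n = 10$ the group $(\mathbb{Z}/n\mathbb{Z})^\ast$ is cyclic of order $4$ (and indeed $\mathbb{Q}(\zeta_{10}) = \mathbb{Q}(\zeta_5)$ is the cyclic quartic field), so these survive. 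For $n = 8$ and $n = 12$, however, $(\mathbb{Z}/n\mathbb{Z})^\ast \cong \mathbb{Z}/2\mathbb{Z} \oplus \mathbb{Z}/2\mathbb{Z}$ is the Klein four group, so $\mathbb{Q}(\zeta_n)$ is biquadratic and cannot equal the cyclic field $K$; thus $n = 8, 12$ are excluded. Combining the three filters leaves exactly $n \in \{1, 2, 5, 10\}$.

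The argument is largely bookkeeping once the Weil-pairing inclusion $\mathbb{Q}(\zeta_n) \subseteq K$ is established; the two genuine ingredients are the total-reality filter and the cyclicity filter. The step I expect to carry the most weight is the identification $\mathbb{Q}(\zeta_n) = F$ with the unique quadratic subfield in the $\varphi(n) = 2$ case, since it is precisely this that lets Lemma \ref{quadtotallyreal} apply; the remaining verifications are a finite inspection of the groups $(\mathbb{Z}/n\mathbb{Z})^\ast$.
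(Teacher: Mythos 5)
Your proof is correct and follows essentially the same route as the paper: the Weil pairing forces $\mathbb{Q}(\zeta_n)\subseteq K$, so $\varphi(n)\mid 4$, and Lemma \ref{quadtotallyreal} eliminates the cases where an imaginary quadratic cyclotomic field would have to be the unique (totally real) quadratic subfield of $K$. The only cosmetic difference is that you dispose of $n=8,12$ by noting $(\mathbb{Z}/n\mathbb{Z})^{\times}$ is not cyclic, whereas the paper observes that $\mathbb{Q}(\zeta_3)$ or $\mathbb{Q}(\zeta_4)$ would again embed in $K$; both are valid finite checks.
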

\begin{proof}
Suppose $E(K)_{\text{tors}} \cong \mathbb{Z}/n\mathbb{Z} \oplus \mathbb{Z}/n\mathbb{Z}$.  By the existence of the Weil pairing, full $n$-torsion is defined over a number field $K$ only if the $n^{\text{th}}$ roots of unity are defined over $K$.  In particular we have $\mathbb{Q}(\zeta_{n}) \subseteq K$.  where $\zeta_n$ denotes a primitive $n^{\text{th}}$ root of unity.  Thus we have
$$[K:\mathbb{Q}(\zeta_n)] [\mathbb{Q}(\zeta_n) : \mathbb{Q}] = [K: \mathbb{Q}] = 4$$
and so $[\mathbb{Q}(\zeta_n) : \mathbb{Q}]$ divides 4.  However, notice that $\mathbb{Q}(\zeta_4) = \mathbb{Q}(i)$ and $\mathbb{Q}(\zeta_3) = \mathbb{Q}(\sqrt{-3})$ are not contained in any cyclic quartic number field because of Lemma \ref{quadtotallyreal}.  Therefore, $n$ is not divisible by 3 or 4.  Now examining values of $\varphi(n) = [\mathbb{Q}(\zeta_n):\mathbb{Q}]$ shows that the only possibilities for $n$ are $1, 2, 5, \text{ or } 10$.
\end{proof}

In fact, Bruin and Najman in \cite{bruinnajman} show that it is impossible for full 10-torsion to be defined over a quartic number field (see Theorem \ref{Najman5}).

A vital tool that will be used is the complete classification of $\mathbb{Q}$-rational points on $X_{0}(N)$ for any $N$.  This work was completed due to Fricke, Kenku, Klein, Kubert, Ligozat, Mazur, and Ogg, among others.

\begin{thm}\label{isogoverQ}
If $E/\mathbb{Q}$ has an $n$-isogeny over $\Q$, then $n \leq 19$ or $n \in \{21,25,27,37,43,67,163\}$.  If $E$ does not have complex multiplication, then $n \leq 18$ or $n \in \{21, 25, 37\}$.  
\end{thm}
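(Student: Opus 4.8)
The statement is classical, and its complete proof is the accumulated work of the authors named just before it; what I describe is the strategy one assembles from that work. The plan is to translate the existence of a rational $n$-isogeny into the existence of a non-cuspidal $\mathbb{Q}$-rational point on the modular curve $X_0(n)$, whose points parametrize pairs $(E,C)$ with $C \subset E$ a cyclic subgroup of order $n$ stable under $\mathrm{Gal}(\overline{\mathbb{Q}}/\mathbb{Q})$. Since an $n$-isogeny produces a $d$-isogeny for every $d \mid n$, and since $X_0(ab) = X_0(a) \times_{X(1)} X_0(b)$ whenever $\gcd(a,b)=1$, the whole problem reduces to deciding, for each $N$, whether $X_0(N)(\mathbb{Q})$ contains a non-cuspidal point, and to separating the CM points from the non-CM ones.

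First I would stratify by the genus of $X_0(N)$, computed from the standard formula in terms of the index $[\mathrm{SL}_2(\mathbb{Z}) : \Gamma_0(N)]$ together with the numbers of elliptic points and cusps. For the finitely many $N$ with $X_0(N)$ of genus $0$ (namely $N \le 10$ and $N \in \{12,13,16,18,25\}$), the curve has a rational cusp, hence $X_0(N) \cong \mathbb{P}^1_{\mathbb{Q}}$ and carries infinitely many rational points, all but finitely many non-cuspidal and non-CM; this already realizes every small $n$ and the values $16,18,25$. For the $N$ with $X_0(N)$ of genus $1$ (for instance $N \in \{11,14,15,17,19,20,21,24,27,32,36,49\}$), the curve is an elliptic curve over $\mathbb{Q}$, and I would compute its Mordell--Weil group by descent: each turns out to have rank $0$, so $X_0(N)(\mathbb{Q})$ is finite, can be listed explicitly, and each point classified as a cusp, a CM point, or a non-CM point. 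For $N$ with genus at least $2$, finiteness of $X_0(N)(\mathbb{Q})$ is guaranteed abstractly by Faltings, but an effective determination is needed; for the relevant levels ($N=37$ of genus $2$, and $43,67,163$) one produces the finite point set explicitly.

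The decisive input is the prime-level case, which is Mazur's theorem: for a prime $p$, the set $X_0(p)(\mathbb{Q})$ contains a non-cuspidal point only for $p \in \{2,3,5,7,11,13,17,19,37,43,67,163\}$. Its proof runs through the Jacobian $J_0(p)$ and the Eisenstein ideal: one shows the cuspidal divisor class has large order and that a hypothetical non-cuspidal rational point would yield a map to a nontrivial quotient of $J_0(p)$ with finite Mordell--Weil group, producing a contradiction. Granting Mazur's bound, the prime-power and composite levels are handled by Kenku's analysis: combining the bound with genus computations and explicit point-counting shows that the admissible prime powers stop at $16, 25$, and $27$ (with $32, 49, 81, 125$ and higher excluded), and the coprime composites are pinned down by examining the finitely many remaining $X_0(N)$ directly, using that a point of $X_0(ab)(\mathbb{Q})$ projects onto points of both $X_0(a)(\mathbb{Q})$ and $X_0(b)(\mathbb{Q})$.

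Finally I would isolate the CM contributions. There are exactly thirteen rational $j$-invariants with complex multiplication, one for each imaginary quadratic order of class number one, and these can be checked individually: a curve with CM by an order of discriminant $-D$ acquires rational isogenies whose degrees are read off from $D$, and this is precisely what supplies the CM-only degrees $19, 27, 43, 67, 163$ that are absent from the non-CM list, while $X_0(37)$ contributes the non-CM value $37$. The main obstacle is unquestionably Mazur's prime-level theorem: the genus-$0$ cases are elementary, the genus-$1$ cases are finite rank computations, and the higher-genus and CM cases are finite explicit checks, but ruling out every prime outside Mazur's list genuinely requires the full Eisenstein-ideal machinery and cannot be reached by elementary means.
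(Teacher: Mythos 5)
Your proposal is a correct outline of the standard proof, and it matches the route the paper itself takes: the paper offers no independent argument but simply cites the classical determination of $X_0(N)(\mathbb{Q})$ by Mazur, Kenku, and others (via \cite{lozanorobledo1}, Theorem 9.5), which is exactly the reduction to non-cuspidal rational points on $X_0(N)$, Mazur's prime-level theorem, Kenku's treatment of composite levels, and the separate accounting of the thirteen rational CM $j$-invariants that you describe.
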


See \cite{lozanorobledo1}, Theorem 9.5 for a more detailed discussion of this Theorem.

The following lemma helps us understand the 2-torsion of an elliptic curve over $\mathbb{Q}$.

\begin{lemma}\label{2torsionoverQ}
Let $K$ be a number field of degree not divisible by 3, and let $E$ be an elliptic curve over $\mathbb{Q}$.  If $E(\mathbb{Q})[2] = \{\mathcal{O}\}$, then $E(K)[2] = \{\mathcal{O}\}$.
\end{lemma}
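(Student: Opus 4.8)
The plan is to translate the hypothesis and conclusion, both phrased in terms of $2$-torsion points, into statements about the roots of the $2$-division polynomial, and then to finish with an elementary degree count. Since $E$ is defined over $\mathbb{Q}$, a field of characteristic zero, I would first write $E$ in the form $y^2 = f(x)$ where $f(x) \in \mathbb{Q}[x]$ is a monic cubic. The nontrivial points of order $2$ are precisely the points $(e,0)$ with $f(e) = 0$, so for any field $F$ with $\mathbb{Q} \subseteq F$ one has $E(F)[2] = \{\mathcal{O}\}$ if and only if $f$ has no root in $F$. In particular, the hypothesis $E(\mathbb{Q})[2] = \{\mathcal{O}\}$ is equivalent to saying $f$ has no rational root.

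Next I would observe that, because $f$ is a cubic, having no rational root is equivalent to being irreducible over $\mathbb{Q}$: any factorization of a degree-three polynomial over $\mathbb{Q}$ must produce a linear factor, hence a rational root. So the hypothesis upgrades to: $f$ is irreducible of degree $3$ over $\mathbb{Q}$. Now suppose for contradiction that $E(K)[2] \neq \{\mathcal{O}\}$; then $f$ has a root $\alpha \in K$, and irreducibility forces $[\mathbb{Q}(\alpha):\mathbb{Q}] = 3$. The tower $\mathbb{Q} \subseteq \mathbb{Q}(\alpha) \subseteq K$ together with the multiplicativity of degrees gives that $3 = [\mathbb{Q}(\alpha):\mathbb{Q}]$ divides $[K:\mathbb{Q}]$, contradicting the assumption that $[K:\mathbb{Q}]$ is not divisible by $3$. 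Hence $E(K)[2] = \{\mathcal{O}\}$, as desired.

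There is no serious obstacle here: the content of the lemma is exactly the passage from ``$E(\mathbb{Q})[2]$ trivial'' to ``the $x$-coordinate cubic is irreducible,'' after which the divisibility hypothesis on $[K:\mathbb{Q}]$ does all the work. The only point requiring a little care is making sure the $2$-torsion is correctly identified with the roots of $f$; this is automatic once $E$ is placed in short (or long) Weierstrass form over $\mathbb{Q}$, and it is where the hypothesis that $E$ is defined over $\mathbb{Q}$ — so that $f$ has rational coefficients and its degree controls the extension degree — is used.
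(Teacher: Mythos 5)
Your proof is correct and follows essentially the same route as the paper's: pass to a model $y^2=f(x)$ with $f$ a cubic over $\mathbb{Q}$, note that triviality of $E(\mathbb{Q})[2]$ forces $f$ to be irreducible, and conclude that a root of $f$ generates a degree-$3$ subfield which cannot sit inside $K$ since $3 \nmid [K:\mathbb{Q}]$. No differences worth noting.
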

\begin{proof}
Suppose $E(\mathbb{Q})[2]$ is trivial.  We may find a model for $E$ in the form $y^2 = f(x)$ for some cubic polynomial $f$.  Since the 2-torsion points of $E$ are all of the form $(\alpha , 0)$ for a root $\alpha$ of $f$, it follows that $f$ is irreducible over $\mathbb{Q}$.  Thus, the field of definition of any 2-torsion point of $E$ is $\mathbb{Q}(\alpha)$ for some root $\alpha$ of $f$, which is a degree 3 number field.  Since $K$ is of degree not divisible by 3, it follows that $\mathbb{Q}(\alpha) \not\subseteq K$ for any root $\alpha$ of $f$, and thus, $E(K)$ contains no non-trivial points of order 2.
\end{proof}

The following Lemma gives a criterion for a point to be halved:

\begin{lemma}[Knapp \cite{knapp}, Theorem 4.2, p. 85]\label{halvingpoint}
Let $K$ be a field of characteristic not equal to 2 or 3, and let $E$ be an elliptic curve over $K$ given by $y^2 = (x-\alpha)(x-\beta)(x-\gamma)$ with $\alpha, \beta, \gamma$ in $K$.  For $P=(x,y) \in E(K)$, there exists a $K$-rational point $Q = (x',y')$ on $E$ such that $[2]Q = P$ if and only if $x- \alpha, x-\beta,$ and $x-\gamma$ are all squares in $K$.  In this case, if we fix the sign of $\sqrt{x-\alpha}, \sqrt{x-\beta},$ and $\sqrt{x-\gamma}$, then $x'$ equals one of the following:
$$\sqrt{x-\alpha}\sqrt{x-\beta} \pm \sqrt{x-\alpha}\sqrt{x-\gamma} \pm \sqrt{x-\beta}\sqrt{x-\gamma} + x$$
or
$$-\sqrt{x-\alpha}\sqrt{x-\beta} \pm \sqrt{x-\alpha}\sqrt{x-\gamma} \mp \sqrt{x-\beta}\sqrt{x-\gamma} + x$$
where the signs are taken simultaneously.
\end{lemma}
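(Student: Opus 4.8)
The plan is to recognize this statement as the explicit form of the classical $2$-descent (Kummer) map on $E$ in the case where all of $E[2]$ is $K$-rational. Since $f(x) = (x-\alpha)(x-\beta)(x-\gamma)$ splits over $K$, the full two-torsion $E[2] = \{\mathcal{O}, (\alpha,0),(\beta,0),(\gamma,0)\}$ lies in $E(K)$, and the Kummer sequence furnishes a connecting homomorphism $\delta : E(K)/2E(K) \hookrightarrow (K^\ast/(K^\ast)^2)^2$ whose kernel is exactly $2E(K)$. A point $Q$ with $[2]Q = P$ exists over $K$ precisely when $P \in 2E(K)$, i.e. when $\delta(P)$ is trivial, so the entire content is to compute $\delta(P) = (x-\alpha,\,x-\beta)$ (the third coordinate $x-\gamma$ being forced since the product equals $y^2$) and then to extract the explicit $x'$. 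I would establish the forward implication by a direct calculation and then treat the converse together with the formula.

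For the forward direction, suppose $[2]Q = P$ with $Q = (x',y')$ and $P$ not two-torsion. Writing the tangent line to $E$ at $Q$ with slope $\lambda = f'(x')/(2y')$ and substituting into $y^2 = f(x)$, the resulting cubic has $x'$ as a double root and $x$ as its third root, so $x = \lambda^2 + (\alpha+\beta+\gamma) - 2x'$. Setting $u = x'-\alpha$, $v = x'-\beta$, $w = x'-\gamma$ (so $uvw = f(x') = y'^2$ and $vw+uw+uv = f'(x')$), a short expansion gives the key factorization
$$x - \alpha = \lambda^2 - v - w = \frac{(uv + uw - vw)^2}{4uvw} = \left(\frac{uv+uw-vw}{2y'}\right)^2,$$
which is manifestly a square in $K$; the symmetric computations give analogous expressions for $x-\beta$ and $x-\gamma$. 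This proves all three quantities are squares, and it records the square roots in a form I can later invert.

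For the converse, assume $x-\alpha = s^2$, $x-\beta = t^2$, $x-\gamma = r^2$ with $s,t,r \in K$, and set $x' := \pm st \pm sr \pm tr + x$ for a choice of signs as in the statement. The task is to exhibit a genuine halving point with this $x$-coordinate: concretely, I would verify that $(x'-\alpha)(x'-\beta)(x'-\gamma)$ is a square in $K$ (so a $K$-rational $y'$ exists) and that the doubling relation $x = \lambda^2 + (\alpha+\beta+\gamma) - 2x'$ holds. Substituting $\alpha = x - s^2$, $\beta = x - t^2$, $\gamma = x - r^2$ converts both checks into polynomial identities in $s,t,r$, and the admissible sign patterns, constrained by the relation $str = \pm y$, correspond to the four points $Q,\, Q+(\alpha,0),\, Q+(\beta,0),\, Q+(\gamma,0)$ in the fiber of $[2]$ above $P$.

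The main obstacle is the converse: producing $Q$ explicitly and matching the precise sign coordination of the two displayed families for $x'$. The forward direction is a single clean factorization, but the converse requires inverting the relation between $(s,t,r)$ and $(x',y')$ and tracking how the fixed signs of the chosen roots $s,t,r$ propagate through the constraint $str=\pm y$ into the sign choices in $x'$; this bookkeeping, rather than any conceptual difficulty, is the delicate part. Alternatively, the bare existence half of the equivalence follows at once from the injectivity of $\delta$ on $E(K)/2E(K)$, which sidesteps the computation but does not by itself yield the explicit formula for $x'$.
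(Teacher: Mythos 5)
The paper offers no proof of this lemma at all: it is imported verbatim from Knapp (Theorem 4.2, p.~85) and used as a black box in the arguments ruling out points of order $20$ and $24$, so there is no internal argument to compare yours against — only the citation. Your sketch is essentially the classical proof (and, in substance, Knapp's): the forward direction as you give it is complete and correct, since with $u=x'-\alpha$, $v=x'-\beta$, $w=x'-\gamma$ one has $uvw=y'^2$, $f'(x')=uv+uw+vw$, and indeed
$$\lambda^2-v-w=\frac{(uv+uw+vw)^2-4uvw(v+w)}{4uvw}=\left(\frac{uv+uw-vw}{2y'}\right)^2,$$
with the symmetric statements for $x-\beta$ and $x-\gamma$. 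The converse you leave as a plan, but the plan does close up and is worth recording: for the all-plus sign choice one computes $x'-\alpha=s^2+st+sr+tr=(s+t)(s+r)$, and symmetrically $x'-\beta=(s+t)(t+r)$, $x'-\gamma=(s+r)(t+r)$, so their product is the square of $(s+t)(s+r)(t+r)$ and a $K$-rational $y'$ exists; then $uv+uw-vw=(s+t)(s+r)(t+r)\bigl((s+t)+(s+r)-(t+r)\bigr)=\pm 2sy'$, so by the forward-direction formula $x([2]Q)-\alpha=s^2=x-\alpha$, whence $[2]Q=\pm P$ and one of $\pm Q$ halves $P$; the other three sign patterns are obtained by translating $Q$ by the three $2$-torsion points, which is exactly the fiber of $[2]$ over $P$. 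Your alternative remark — that the bare existence criterion follows from injectivity of the Kummer map $\delta$ on $E(K)/2E(K)$ — is a legitimate shortcut for the "if and only if" but, as you say, does not produce the explicit $x'$, and it is precisely the explicit $x'$ that the paper needs to pin down the field $K=F\bigl(\sqrt{x-\alpha},\sqrt{x-\beta},\sqrt{x-\gamma}\bigr)$ in Lemmas on orders $20$ and $24$. So: correct approach, same route as the cited source, with the converse's polynomial identities asserted rather than exhibited.
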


The following lemma gives a bound on when full $p$-torsion may appear over a number field.

\begin{lemma}\label{landau} Let $E/\mathbb{Q}$ be an elliptic curve and let $K/ \mathbb{Q}$ be a number field of degree $n$ such that $E$ has full $p$-torsion defined over the Galois closure of $K$.  Then $p -1\leq g(n)$ where $g(n)$ denotes the Landau function.
\end{lemma}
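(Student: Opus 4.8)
The plan is to recast the hypothesis as the statement that the Galois group of the Galois closure of $K$ contains an element of order at least $p-1$, and then to observe that such an element, regarded as a permutation, cannot have order exceeding the Landau function $g(n)$. Let $L$ be the Galois closure of $K/\mathbb{Q}$; by assumption $E[p] \subseteq E(L)$. Arguing exactly as in the proof of Lemma \ref{fullntorsion} via the Weil pairing, the presence of full $p$-torsion over $L$ forces $\mathbb{Q}(\zeta_p) \subseteq L$, where $\zeta_p$ is a primitive $p$-th root of unity.

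Next I would exploit the two pieces of structure on $G := \Gal(L/\mathbb{Q})$. On one hand, because $L$ is the Galois closure of a degree-$n$ extension, $G$ acts faithfully and transitively on the $n$ embeddings $K \hookrightarrow \overline{\mathbb{Q}}$, yielding an embedding $G \hookrightarrow S_n$. On the other hand, the inclusion $\mathbb{Q}(\zeta_p) \subseteq L$ of Galois extensions of $\mathbb{Q}$ gives a surjection $G \twoheadrightarrow \Gal(\mathbb{Q}(\zeta_p)/\mathbb{Q}) \cong (\mathbb{Z}/p\mathbb{Z})^{\times}$, and the latter group is cyclic of order $p-1$. Choosing $g \in G$ that maps to a generator of this cyclic quotient, the order of $g$ must be divisible by $p-1$, and in particular is at least $p-1$. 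Transporting $g$ through the embedding $G \hookrightarrow S_n$, its order is bounded above by the maximal order of a permutation in $S_n$, which is by definition the Landau function $g(n)$. Combining these bounds yields $p-1 \leq g(n)$.

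I do not expect a serious obstacle here: the lemma is really an assembly of the Weil-pairing reduction, the faithful and transitive action of a Galois closure giving $G \hookrightarrow S_n$, and the defining property of $g(n)$ as $\max_{\sigma \in S_n} \mathrm{ord}(\sigma) = \max \lcm(\lambda_1,\ldots,\lambda_k)$ over partitions $\lambda_1 + \cdots + \lambda_k = n$. The one point deserving care is the passage from a cyclic quotient to a single element of large order: it is essential that $\Gal(\mathbb{Q}(\zeta_p)/\mathbb{Q})$ is cyclic, so that surjectivity produces an element whose order is an honest multiple of $p-1$, rather than merely an abelian quotient of that size whose elements could individually all have small order.
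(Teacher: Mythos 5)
Your proposal is correct and follows essentially the same route as the paper: the Weil pairing forces $\mathbb{Q}(\zeta_p) \subseteq L$, the Galois closure gives $\Gal(L/\mathbb{Q}) \hookrightarrow S_n$, and the cyclic quotient $(\mathbb{Z}/p\mathbb{Z})^{\times}$ produces an element of order at least $p-1$, which is then bounded by $g(n)$. Your added care about lifting a generator of the cyclic quotient to an element of order divisible by $p-1$ is a point the paper states more tersely, but the argument is the same.
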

\begin{proof}  Let $L$ be the Galois closure of $K$.  If full $p$-torsion is defined over $E(L)$, then by the existence of the Weil pairing, $\mathbb{Q}(\zeta_{p}) \subseteq L$.  Since $L$ is the Galois closure of a degree $n$ number field, $\Gal(L/\mathbb{Q}) \leq S_{n}$.  By Galois theory we have that $\Gal(\mathbb{Q}(\zeta_{p})/\mathbb{Q}) \cong \left( \mathbb{Z}/p\mathbb{Z} \right)^{\times}$ is a quotient of $\Gal(L/\mathbb{Q})$.  Thus, we must have an element of order $p-1$ in $S_{n}$.  Since the highest order element in $S_{n}$ is given by $g(n)$, it follows that $p-1\leq g(n)$.
\end{proof}

Here is a table of the first couple values for $g(n)$:

$$
\begin{array}{|c|cccccccc|}
\hline n & 1 & 2 & 3 & 4 & 5 & 6 & 7 & 8 \\ \hline
 g(n) & 1 & 2 & 3 & 4 & 6 & 6 & 12 & 15 \\ \hline
\end{array}
$$

The following proposition shows when torsion points over a quartic field $K$ are actually defined over an intermediate quadratic field.

\begin{prop}\label{torsionfromquad} Let $p \equiv 3 \bmod 4$ be a prime with $p \geq 7$.  Let $E/\mathbb{Q}$ be an elliptic curve and let $K / \mathbb{Q}$ be a quartic field such that $E(K)_{\text{tors}}$ contains a point $P$ of order $p$.\\
Then either:
\begin{itemize}
\item $P$ is defined over $\mathbb{Q}$, i.e., $P \in E(\mathbb{Q})[p]$
\item There is $F/\mathbb{Q}$, $F \subseteq K$, $[F:\mathbb{Q}] = 2$ such that $P \in E(F)[p]$.
\end{itemize}
\end{prop}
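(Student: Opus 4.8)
The plan is to prove the sharper statement that $L:=\Q(P)$, the field of definition of $P$, has $[L:\Q]\le 2$. Since $P\in E(K)$ we have $L\subseteq K$, so the tower law gives $[L:\Q]\mid 4$, i.e. $[L:\Q]\in\{1,2,4\}$; the two asserted alternatives are precisely $[L:\Q]=1$ (then $P\in E(\Q)[p]$) and $[L:\Q]=2$ (take $F=L$). Thus the entire content is to exclude $[L:\Q]=4$. I would phrase everything through the mod-$p$ representation $\bar\rho\colon\GQ\to\GL_2(\F_p)$ on $E[p]$, with image $G:=\image(\bar\rho)$, writing $C=\langle P\rangle$ for the order-$p$ subgroup (a line in $\F_p^2$) generated by $P$ and recording the tower $\Q\subseteq\Q(C)\subseteq\Q(P)=L$, where $\Q(C)$ is the field of definition of the subgroup $C$.

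The first step is a degree factorization. Over $\Q(C)$ the subgroup $C$ is Galois-stable, and $\Gal(\overline{\Q}/\Q(C))$ acts on $C$ by a character into $\Aut(C)=(\Z/p\Z)^\times$ whose kernel cuts out exactly $\Q(P)$; writing $a$ for the order of this character, $[L:\Q]=[\Q(C):\Q]\cdot a$ with $a\mid(p-1)$. This is the only point where I use $p\equiv 3\bmod 4$: then $\gcd(4,p-1)=2$, so if $[L:\Q]=4$ we must have $a\in\{1,2\}$, leaving exactly the two configurations $(a,[\Q(C):\Q])=(2,2)$ and $(1,4)$. The engine for ruling both out is that, since $E$ is defined over $\Q$, the Weil pairing identifies $\det\circ\bar\rho$ with the mod-$p$ cyclotomic character, so $\det(G)=(\Z/p\Z)^\times$ is cyclic of order $p-1\ge 6$.

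In the configuration $(2,2)$ the line $C$ has a Galois orbit $\{C,C'\}$ of size two, so $G$ sits inside the normalizer of the split Cartan with eigenlines $C,C'$: the index-two subgroup $T=\operatorname{Stab}_G(C)$ is diagonal in an eigenbasis, and the elements of $G\setminus T$ are anti-diagonal. The condition $a=2$ forces the diagonal entries of every element of $T$ into $\{\pm1\}$, and squaring an anti-diagonal element shows its determinant lies in $\{\pm1\}$ as well; hence $\det(G)\subseteq\{\pm1\}$, contradicting $|\det(G)|=p-1\ge 6$. In the configuration $(1,4)$ the orbit of $C$ has size four, say $\{C_1,\dots,C_4\}$, and I consider the permutation action $G\to\operatorname{Sym}\{C_1,\dots,C_4\}$. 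Its kernel fixes four distinct lines, hence consists of scalar matrices; the condition $a=1$ says that $\operatorname{Stab}_G(C_1)$ acts trivially on $C_1$, which kills every such scalar, so the action is faithful and $G\hookrightarrow S_4$. But then the cyclic group $(\Z/p\Z)^\times$ of order $p-1\ge 6$ is a quotient of a subgroup of $S_4$, whereas no subgroup of $S_4$ admits a cyclic quotient of order larger than $4$ — a contradiction. Hence $[L:\Q]\ne 4$, and the proposition follows.

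The main obstacle is the configuration $(1,4)$: there is no direct handle on $[\Q(C):\Q]$, and the rational-isogeny classification (Theorem \ref{isogoverQ}) governs only $\Q$-rational subgroups, not subgroups defined over a quartic field. The decisive trick is that the trivial action on the line (the case $a=1$) promotes the permutation representation on the four conjugate lines to a faithful embedding $G\hookrightarrow S_4$, after which surjectivity of the determinant finishes the job. I would be careful to verify the two group-theoretic inputs the argument leans on — that an element of $\GL_2(\F_p)$ fixing three distinct lines must be scalar, and that the largest cyclic quotient of any subgroup of $S_4$ has order $4$ — together with the standing hypothesis $p\ge 7$, which is exactly what makes $p-1\ge 6$ exceed these bounds.
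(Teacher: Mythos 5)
Your argument is correct, but it diverges from the paper's at the crucial step. The paper first shows that $C=\langle P\rangle$ is stable under all of $\Gal(\overline{\Q}/\Q)$ (equivalently, $[\Q(C):\Q]=1$): writing $P^{\sigma}=\alpha P+\beta Q$ for $\sigma$ in the Galois group of the Galois closure $L$ of $K$, a nonzero $\beta$ would put full $p$-torsion in $E(L)$, which Lemma \ref{landau} forbids since $g(4)=4<p-1$. Once the image is Borel, the character $\varphi$ recording the action on $C$ cuts out $\Q(P)$, and its order divides $\gcd(p-1,4)=2$; in your notation this is exactly the statement that $[\Q(C):\Q]=1$ and $a\in\{1,2\}$. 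You never establish this rational isogeny: you allow $[\Q(C):\Q]\in\{2,4\}$ and eliminate those configurations using surjectivity of $\det\circ\bar\rho$ (Weil pairing plus surjectivity of the mod-$p$ cyclotomic character) against the normalizer-of-split-Cartan structure and the faithful embedding of the image into $S_4$. This buys independence from Lemma \ref{landau} and from any consideration of the Galois closure, at the cost of more case analysis inside $\GL_2(\mathbb{F}_p)$; both routes use $p\geq 7$ in the same essential way ($p-1\geq 6$). One point to tighten in your $(2,2)$ configuration: the condition $a=2$ directly controls only the $C$-eigenvalue of elements of $T=\operatorname{Stab}_G(C)$; to force the $C'$-eigenvalue into $\{\pm 1\}$ as well, conjugate $T$ (which is normal, being of index $2$) by an anti-diagonal element of $G\setminus T$, which swaps the two diagonal characters and shows they have the same image. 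With that line added, $\det(G)\subseteq\{\pm 1\}$ follows and the contradiction with $\lvert\det(G)\rvert=p-1\geq 6$ stands, so the proof is complete.
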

\begin{proof}
Let $L$ denote the Galois closure of $K$.  Now, consider the Galois representation of $\Gal(\overline{\Q}/\mathbb{Q})$ on $E[p]$ by fixing a basis $\{P,Q\}$ of $E[p]$:
$$\rho : \Gal(\overline{\mathbb{Q}}/\mathbb{Q}) \rightarrow \GL_{2}(\mathbb{F}_{p}).$$
Without loss of generality let $P$ be the point defined over $K$.  Let $\sigma \in \Gal(L/\mathbb{Q})$.  Then $P^{\sigma}= \alpha P + \beta Q$ for some $\alpha, \beta \in \mathbb{F}_p$.  But since $P^{\sigma} - \alpha P= P^{\sigma} + (p-\alpha)P= \beta Q \in E(L)$, it follows that $\beta = 0$ otherwise $E(L)$ would have full $p$-torsion, which is impossible by Lemma \ref{landau} (notice that $g(4)=4$). Thus, we have that $P^{\sigma} \in \langle P \rangle$ for all $\sigma \in \Gal(L/\mathbb{Q})$.  In fact, since $\Gal(L/\mathbb{Q}) \cong \Gal(\overline{\mathbb{Q}}/\mathbb{Q}) / \Gal(\overline{\mathbb{Q}}/L)$ it follows that $$P^{\sigma} \in \langle P \rangle \mbox{ for all } \sigma \in \Gal(\overline{\mathbb{Q}}/\mathbb{Q})$$
Thus, the image of $\rho$ is contained in a Borel subgroup of $\GL_{2}(\mathbb{F}_p)$.  Suppose $$\rho(\sigma)=\begin{pmatrix} \varphi(\sigma) & \tau(\sigma) \\ 0 & \psi(\sigma) \end{pmatrix}$$
where $\varphi , \psi$ are $\mathbb{F}_p$-valued characters of $\Gal(\overline{\mathbb{Q}}/\mathbb{Q})$  and $\tau : \Gal(\overline{\mathbb{Q}}/\mathbb{Q}) \rightarrow \mathbb{F}_{p}$.  Notice that if $P^{\sigma} = aP$, then $\varphi(\sigma) = a$.  Thus, we have that the field of definition $\mathbb{Q}(P) = F$ where $F$ is defined by $\ker(\varphi) = \Gal(\overline{\mathbb{Q}}/F)$.\\\\
Now we claim that $[\mathbb{Q}(P) : \mathbb{Q}] = \# \image \varphi$ which can be seen as follows:
Let $H$ be the subgroup of $G=\Gal(L/\mathbb{Q})$ fixing $\mathbb{Q}(P)$.  Then $$\# \image \varphi = \# \{ P^{\sigma} : \sigma \in G \} = \frac{|G|}{|H|}=[\mathbb{Q}(P):\mathbb{Q}]$$
Now, since $\image\varphi \leq \mathbb{F}_{p}^{\times}$, we have that $$\# \image \varphi \mid p-1.$$ Also since $\mathbb{Q}(P) \subseteq K$ we have that $$\# \image \varphi = [\mathbb{Q}(P) : \mathbb{Q}] \mid [K:Q] = 4.$$ Now, since $p \not\equiv 1 \bmod 4$, it follows that $\# \image \varphi = 1 \mbox{ or } 2$, proving the proposition.
\end{proof}

Notice we can make use of this proposition to narrow the list of possible torsion subgroup structures.\\

\begin{prop} The following subgroups do not appear as torsion subgroups over any quartic number field of any elliptic curve defined over $\mathbb{Q}$:

$$\mathbb{Z} / 14 \mathbb{Z} , \mathbb{Z} / 2 \mathbb{Z} \oplus \mathbb{Z}/ 14 \mathbb{Z}$$
\end{prop}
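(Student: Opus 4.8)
The plan is to reduce the quartic problem to the already-settled rational and quadratic cases, using Proposition \ref{torsionfromquad} to control the $7$-torsion and Lemma \ref{2torsionoverQ} to control the $2$-torsion. Both $\mathbb{Z}/14\mathbb{Z}$ and $\mathbb{Z}/2\mathbb{Z} \oplus \mathbb{Z}/14\mathbb{Z}$ contain a point of order $14$, hence a point $P$ of order $7$ and a point $T$ of order $2$; since my argument will use only these two points, I can dispose of both target groups at once. So I assume $E/\mathbb{Q}$ is an elliptic curve, $K/\mathbb{Q}$ is quartic, and $E(K)$ contains such $P$ and $T$, and I aim for a contradiction.

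First I would pin down the $2$-torsion. Since $[K:\mathbb{Q}] = 4$ is not divisible by $3$, Lemma \ref{2torsionoverQ} applies: were $E(\mathbb{Q})[2]$ trivial, then $E(K)[2]$ would be trivial, contradicting the presence of $T$. Hence $E(\mathbb{Q})$ contains a nontrivial $2$-torsion point, which I again call $T$; in particular $T \in E(\mathbb{Q}) \subseteq E(F)$ for every subfield $F$ with $\mathbb{Q} \subseteq F \subseteq K$.

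Next I would locate the $7$-torsion via Proposition \ref{torsionfromquad}, which applies because $7 \equiv 3 \bmod 4$ and $7 \geq 7$. It yields two alternatives. If $P \in E(\mathbb{Q})[7]$, then $E(\mathbb{Q})_{\text{tors}}$ contains both $P$ and $T$ and therefore a copy of $\mathbb{Z}/14\mathbb{Z}$, which is impossible by Mazur's theorem. Otherwise $P$ lies in $E(F)[7]$ for the quadratic field $F = \mathbb{Q}(P) \subseteq K$; then $E(F)_{\text{tors}}$ contains both $P$ and $T$, so its order is divisible by $14$. But scanning Najman's classification in Theorem \ref{quadcase}, the only listed group whose order is divisible by $7$ is $\mathbb{Z}/7\mathbb{Z}$, whose order is odd, so no group in the list has order divisible by $14$ --- a contradiction. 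Both alternatives failing, neither $\mathbb{Z}/14\mathbb{Z}$ nor $\mathbb{Z}/2\mathbb{Z} \oplus \mathbb{Z}/14\mathbb{Z}$ can arise.

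I do not anticipate a serious obstacle, since the heavy lifting is already packaged in Proposition \ref{torsionfromquad}; the argument is essentially a step-by-step reduction that pushes each piece of the torsion down to a smaller field where a known classification forbids it. The one step that genuinely needs care is ensuring the $2$-torsion point descends all the way to $\mathbb{Q}$ rather than merely to the quadratic field $F$ --- this is exactly what Lemma \ref{2torsionoverQ} provides, and it is what makes the quadratic alternative yield a group of order divisible by $14$, contradicting Theorem \ref{quadcase}.
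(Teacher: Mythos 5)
Your proof is correct and takes essentially the same route as the paper's: both first use Lemma \ref{2torsionoverQ} to force a rational $2$-torsion point, then apply Proposition \ref{torsionfromquad} to push the $7$-torsion point down to $\mathbb{Q}$ or to a quadratic subfield, contradicting Mazur's theorem in the first case and Najman's quadratic classification (Theorem \ref{quadcase}) in the second. The only difference is cosmetic ordering of the case analysis.
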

\begin{proof}
Let $E$ be an elliptic curve over $\mathbb{Q}$, and let $K$ be a quartic number field.  Suppose $E(K) \cong \mathbb{Z} / 14 \mathbb{Z}$.  Suppose further that $E(\mathbb{Q})[2]\neq 0$.  If $E(\mathbb{Q})[7]\neq 0$, then $E$ has a rational torsion point of order 14, impossible by the classification of Mazur.  Thus, since $E(K)[7]\neq 0$, Proposition \ref{torsionfromquad} gives that there exists a quadratic field $F \subseteq K$ such that $E(F)[7] = E(K)[7] \neq 0$.  But $E(\mathbb{Q})[2] \neq 0$ implies that $E(F)[2] \neq 0$ implying that $E(F)[14] \neq 0$ which is impossible by the classification of torsion subgroups of elliptic curves defined over $\mathbb{Q}$ over quadratic number fields.\\
Thus, $E(\mathbb{Q})[2] = 0$.  But this contradicts \ref{2torsionoverQ}, since $E(K)[2] \neq 0$.  
Suppose $E(K) \cong \mathbb{Z} / 2 \mathbb{Z} \oplus \mathbb{Z} / 14 \mathbb{Z}$.  By the same argument as above we will reach the same contradiction. 
\end{proof}

When the field $K$ is Galois over $\mathbb{Q}$ the following lemma helps to determine when an elliptic curve defined over $\mathbb{Q}$ has an $n$-isogeny defined over $\Q$:

\begin{lemma}\label{isogeny}
Let $K$ be a Galois extension of $\mathbb{Q}$, and let $E$ an elliptic curve over $\mathbb{Q}$.  If $E(K)[n] \cong \mathbb{Z}/n\mathbb{Z}$, then $E$ has an $n$-isogeny over $\mathbb{Q}$.
\end{lemma}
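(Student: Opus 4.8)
The plan is to reinterpret the hypothesis in terms of a Galois-stable subgroup and then exploit that $K/\mathbb{Q}$ is Galois. Recall that giving an $n$-isogeny of $E$ defined over $\mathbb{Q}$ is the same as giving a cyclic subgroup $C \subseteq E[n]$ of order $n$ that is stable under the action of $G := \Gal(\overline{\mathbb{Q}}/\mathbb{Q})$: such a $C$ is the kernel of the quotient isogeny $E \to E/C$, and this isogeny descends to $\mathbb{Q}$ precisely because $C$ is $G$-stable. So the whole problem reduces to exhibiting a $G$-stable cyclic subgroup of order $n$.

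The natural candidate is $C := E(K)[n]$, which by hypothesis is cyclic of order exactly $n$. First I would record two elementary facts: every point of $C$ is $K$-rational, hence fixed by the subgroup $H := \Gal(\overline{\mathbb{Q}}/K)$; and since $E$ is defined over $\mathbb{Q}$, every $\sigma \in G$ carries $E[n]$ to $E[n]$ and is a group automorphism, so $\sigma(C) \subseteq E[n]$ is again cyclic of order $n$.

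The crux is to show $\sigma(C) = C$ for all $\sigma \in G$, and this is exactly where the hypothesis that $K$ is Galois enters. Because $K/\mathbb{Q}$ is Galois, $H$ is normal in $G$. Hence for any $\sigma \in G$, any $P \in C$, and any $\tau \in H$ we have $\tau(\sigma(P)) = \sigma\big((\sigma^{-1}\tau\sigma)(P)\big) = \sigma(P)$, since $\sigma^{-1}\tau\sigma \in H$ fixes the $K$-rational point $P$. Thus $\sigma(P)$ is fixed by all of $H$, i.e.\ $\sigma(P) \in E(K)$; combined with $\sigma(P) \in E[n]$ this gives $\sigma(P) \in E(K)[n] = C$. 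Therefore $\sigma(C) \subseteq C$, and as $\sigma$ is an automorphism, $\sigma(C) = C$. This shows $C$ is $G$-stable, so $E \to E/C$ is the desired $n$-isogeny over $\mathbb{Q}$.

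I expect the only genuine subtlety to be the normality step: the argument breaks down if $K/\mathbb{Q}$ is merely a degree-$n$ extension rather than Galois, because then $\sigma^{-1}\tau\sigma$ need not lie in $H$ and $\sigma(C)$ could fail to be $K$-rational. In other words, the main point to get right is that $K$-rationality of the cyclic subgroup is preserved under the outer Galois action precisely when $H \trianglelefteq G$, which is the Galois hypothesis. Everything else — preservation of the order under $\sigma$, and the descent of the quotient isogeny to $\mathbb{Q}$ from $G$-stability — is routine.
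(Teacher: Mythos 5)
Your proof is correct and follows essentially the same route as the paper's: both arguments show that the cyclic group $E(K)[n]$ is stable under $\Gal(\overline{\mathbb{Q}}/\mathbb{Q})$ and conclude that it is the kernel of a rational $n$-isogeny. The only difference is cosmetic — the paper simply asserts that $P^{\sigma}\in E(K)[n]$ because $K/\mathbb{Q}$ is Galois, whereas you justify this step explicitly via normality of $\Gal(\overline{\mathbb{Q}}/K)$ in $\Gal(\overline{\mathbb{Q}}/\mathbb{Q})$, which is a welcome bit of extra care.
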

\begin{proof}
Let $\{ P , Q \}$ be a $\mathbb{Z}/n\mathbb{Z}$-basis for $E[n]$.  Without loss of generality we may assume $P \in E(K)$ and $Q \not\in E(K)$. Let $\sigma \in \Gal(\overline{\mathbb{Q}}/\mathbb{Q})$.  Since $K$ is Galois over $\mathbb{Q}$, and $P \in E(K)[n]$, it follows that $P^{\sigma} \in E(K)[n]$.  By assumption, $E(K)[n] = \langle P \rangle$ and thus $P^{\sigma} \in \langle P \rangle$.  Therefore $\langle P \rangle$ is stable under the action of $\Gal(\overline{\mathbb{Q}}/\mathbb{Q})$, which implies $E$ has an $n$-isogeny over $\mathbb{Q}$.
\end{proof}

This combined with the complete classification of rational $n$-isogenies limits the possible torsion subgroups for elliptic curves over $\mathbb{Q}$ in Galois extensions of $\mathbb{Q}$.

\begin{cor}\label{nopoints}
Let $E$ be an elliptic curve over $\mathbb{Q}$, and $K$ a Galois extension of $\mathbb{Q}$.  If $E(K)$ has a point of order $n$= 39, 49, 81, 91, 169, then $E(K)[n]$ is not cyclic.
\end{cor}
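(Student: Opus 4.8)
The plan is to argue by contradiction, combining Lemma \ref{isogeny} with the classification of rational isogenies in Theorem \ref{isogoverQ}. Fix $n \in \{39, 49, 81, 91, 169\}$ and suppose, toward a contradiction, that $E(K)[n]$ is cyclic while $E(K)$ contains a point of order $n$.

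The first step is to upgrade ``cyclic'' into an honest isomorphism $E(K)[n] \cong \mathbb{Z}/n\mathbb{Z}$, since this is precisely the hypothesis required by Lemma \ref{isogeny}. Here I would observe that $E(K)[n]$ sits inside $E[n] \cong (\mathbb{Z}/n\mathbb{Z})^2$, a group of exponent $n$; hence any cyclic subgroup is isomorphic to $\mathbb{Z}/d\mathbb{Z}$ for some $d \mid n$. On the other hand, $E(K)$ contains a point of order $n$, so $E(K)[n]$ contains an element of order $n$, which forces $n \mid d$. Together these give $d = n$, so $E(K)[n] \cong \mathbb{Z}/n\mathbb{Z}$.

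With this in hand, since $K/\mathbb{Q}$ is Galois, Lemma \ref{isogeny} applies directly and yields that $E$ admits an $n$-isogeny defined over $\mathbb{Q}$. But Theorem \ref{isogoverQ} constrains any such $n$ to satisfy $n \leq 19$ or $n \in \{21, 25, 27, 37, 43, 67, 163\}$, and this holds whether or not $E$ has complex multiplication. A direct check shows that none of $39, 49, 81, 91, 169$ lies in this set, so no $n$-isogeny over $\mathbb{Q}$ can exist for these values. This contradiction shows that $E(K)[n]$ cannot be cyclic, completing the proof.

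I expect no serious obstacle: the statement is essentially an immediate corollary, and the only point that genuinely requires care is the reduction to $E(K)[n] \cong \mathbb{Z}/n\mathbb{Z}$, so that the hypothesis of Lemma \ref{isogeny} is met exactly rather than merely ``up to a point of order $n$.'' The real content has already been packaged into Lemma \ref{isogeny} (which uses that $K$ is Galois, so that $\langle P \rangle$ is stable under $\Gal(\overline{\mathbb{Q}}/\mathbb{Q})$) and into Theorem \ref{isogoverQ} (the deep classification of $\mathbb{Q}$-rational points on $X_0(N)$); once those are granted, the verification is purely arithmetic.
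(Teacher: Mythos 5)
Your argument is correct and is exactly the one the paper intends: the corollary is stated immediately after Lemma \ref{isogeny} and Theorem \ref{isogoverQ} with the remark that combining them ``limits the possible torsion subgroups,'' and no further proof is given. Your explicit reduction from ``cyclic containing a point of order $n$'' to $E(K)[n]\cong\mathbb{Z}/n\mathbb{Z}$, followed by the check that none of $39,49,81,91,169$ appears in the list of admissible isogeny degrees, fills in precisely the routine details the paper leaves implicit.
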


If we further limit that $K$ is cyclic quartic, then we obtain the following proposition:

\begin{prop}\label{quarisog} Suppose $K$ is a cyclic quartic number field and $E$ is an elliptic curve over $\mathbb{Q}$.  Suppose $E(K)$ has a point of order $n$ for some odd $n$ relatively prime to 5. Then $E/ \mathbb{Q}$ has an $n$-isogeny over $\Q$.
\end{prop}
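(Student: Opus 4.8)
The plan is to reduce everything to Lemma \ref{isogeny}. Since $K$ is cyclic quartic it is in particular Galois over $\mathbb{Q}$, so once we know that $E(K)[n]$ is cyclic, i.e. $E(K)[n] \cong \mathbb{Z}/n\mathbb{Z}$, the existence of an $n$-isogeny over $\mathbb{Q}$ follows immediately from that lemma. Thus the entire content of the proposition is the claim that, under the stated hypotheses, the $n$-torsion captured in $K$ cannot have a second independent cyclic factor.

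To make this precise, I would first record the structure of $E(K)[n]$. Because $E[n] \cong \mathbb{Z}/n\mathbb{Z} \oplus \mathbb{Z}/n\mathbb{Z}$, the subgroup $E(K)[n]$ is isomorphic to $\mathbb{Z}/d\mathbb{Z} \oplus \mathbb{Z}/e\mathbb{Z}$ with $d \mid e \mid n$; and since by hypothesis $E(K)$ contains a point of order $n$, the larger invariant factor is forced to be $e = n$. So $E(K)[n] \cong \mathbb{Z}/d\mathbb{Z} \oplus \mathbb{Z}/n\mathbb{Z}$ for some $d \mid n$, and the goal is exactly to prove $d = 1$. Suppose instead $d > 1$ and choose a prime $p \mid d$; then $E(K)[p] \cong \mathbb{Z}/p\mathbb{Z} \oplus \mathbb{Z}/p\mathbb{Z}$, i.e. $E$ has full $p$-torsion over $K$. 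Note that since $n$ is odd and relatively prime to $5$, the prime $p$ is odd and $p \neq 5$.

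It then remains to rule out full $p$-torsion over $K$ for such a prime, which is essentially the argument of Lemma \ref{fullntorsion} applied to the single prime $p$. By the Weil pairing, full $p$-torsion being defined over $K$ forces $\mathbb{Q}(\zeta_p) \subseteq K$, whence $p - 1 = [\mathbb{Q}(\zeta_p):\mathbb{Q}]$ divides $[K:\mathbb{Q}] = 4$. For an odd prime this leaves only $p = 3$ (the case $p-1 = 4$ being excluded since $p \neq 5$). But $\mathbb{Q}(\zeta_3) = \mathbb{Q}(\sqrt{-3})$ is imaginary quadratic, and by Lemma \ref{quadtotallyreal} the unique quadratic subfield of a cyclic quartic field is totally real, so $\mathbb{Q}(\sqrt{-3}) \not\subseteq K$. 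This contradiction shows no prime $p \mid d$ can exist, hence $d = 1$ and $E(K)[n] \cong \mathbb{Z}/n\mathbb{Z}$; Lemma \ref{isogeny} then yields the $n$-isogeny over $\mathbb{Q}$.

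I do not expect a serious obstacle here: the proof is a short chain of reductions. The only points requiring care are the two structural observations that do the real work, namely that the presence of a point of order $n$ pins the top invariant factor to $n$ (so any failure of cyclicity localizes to full $p$-torsion at a prime dividing $d$), and that the coprimality conditions on $n$ are precisely what make the Weil-pairing degree bound together with Lemma \ref{quadtotallyreal} exclude every admissible prime. It is worth emphasizing in the write-up that the restriction to $n$ coprime to $5$ is exactly what eliminates the otherwise possible case $p = 5$, for which $\mathbb{Q}(\zeta_5)$ is totally real quartic and full $5$-torsion is genuinely allowed.
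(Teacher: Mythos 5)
Your proof is correct and uses the same essential ingredients as the paper's: the Galois stability of $\langle P\rangle$ over the Galois field $K$, and the exclusion of full $p$-torsion for odd $p\neq 5$ via the Weil pairing together with the totally real quadratic subfield (Lemmas \ref{quadtotallyreal} and \ref{fullntorsion}). The only difference is organizational — you first establish that $E(K)[n]$ is cyclic and then invoke Lemma \ref{isogeny}, whereas the paper inlines the basis computation $P^{\sigma}=\alpha P+\beta Q$ and shows $\beta=0$ directly — so this is essentially the same argument.
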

\begin{proof}  Choose a basis $\{P,Q\}$ of $E[n]$ such that $P \in E(K)$.  Let $G = \Gal(K / \mathbb{Q}) = \langle \sigma \rangle$.  Then $P^{\sigma} = \alpha P + \beta Q$ for some $\alpha, \beta \in \mathbb{Z} / n \mathbb{Z}$. Then $(n-\alpha) P + P^{\sigma} = \beta Q \in E(K)$ so $\beta = 0$, otherwise $E(K)$ would contain full $l$-torsion for some $l \mid n$, which is impossible since $K$ does not contain any $l^{th}$ roots of unity by Lemma \ref{fullntorsion}.  So, $\langle P \rangle$ is fixed by the action of $G$, and since the action of $\Gal(\overline{\mathbb{Q}} / \mathbb{Q})$ factors through $G$, we have that $\langle P \rangle$ is fixed by the action of the entire group $\Gal(\overline{\mathbb{Q}} / \mathbb{Q})$.  Thus, $E/\mathbb{Q}$ has an $n$-isogeny over $\Q$.  
\end{proof}

The following two theorems are useful in looking at how specifically the torsion can grow from $\mathbb{Q}$ to $F$ to $K$, where $F$ is the intermediate quadratic number field.  See the references given for more detail on them.

\begin{theorem}[Gonz\'{a}lez-Jim\'{e}nez, Tornero \cite{gonztorn}, Theorem 2]\label{phi2g}
Let $\Phi(1)$ be the set of isomorphism classes of torsion subgroups for an elliptic curve over $\mathbb{Q}$.  For a given group $G \in \Phi(1)$, let $\Phi_{\mathbb{Q}}(2,G)$ denote the possible isomorphism classes of $E(F)_{tors}$ for an elliptic curve $E$ such that $E(\mathbb{Q})_{tors} \cong G$ and $F$ a degree 2 extension of $\mathbb{Q}$.  For $G \in \Phi(1)$, the set $\Phi_{\mathbb{Q}}(2,G)$ is the following:
$$\begin{array}{|c|c|}
\hline
G & \Phi_{\mathbb{Q}}(2,G) \\ \hline
\mathcal{C}_1 & \{\mathcal{C}_1, \mathcal{C}_3, \mathcal{C}_5, \mathcal{C}_7, \mathcal{C}_9 \}\\ \hline
\mathcal{C}_2 & \{\mathcal{C}_2, \mathcal{C}_4, \mathcal{C}_6, \mathcal{C}_8, \mathcal{C}_{10}, \mathcal{C}_{12}, \mathcal{C}_{16}, \mathcal{C}_2 \times \mathcal{C}_2, \mathcal{C}_2 \times \mathcal{C}_6, \mathcal{C}_2 \times \mathcal{C}_{10} \} \\ \hline
\mathcal{C}_3 & \{ \mathcal{C}_3, \mathcal{C}_{15}, \mathcal{C}_3 \times \mathcal{C}_3 \} \\ \hline
\mathcal{C}_4 & \{ \mathcal{C}_4, \mathcal{C}_8, \mathcal{C}_{12}, \mathcal{C}_2 \times \mathcal{C}_4, \mathcal{C}_2 \times \mathcal{C}_8, \mathcal{C}_2 \times \mathcal{C}_{12}, \mathcal{C}_4 \times \mathcal{C}_4 \} \\ \hline
\mathcal{C}_5 & \{ \mathcal{C}_5, \mathcal{C}_{15} \} \\ \hline
\mathcal{C}_6 & \{ \mathcal{C}_6, \mathcal{C}_{12}, \mathcal{C}_2 \times \mathcal{C}_6, \mathcal{C}_3 \times \mathcal{C}_6 \} \\ \hline
\mathcal{C}_7 & \{ \mathcal{C}_7 \} \\ \hline
\mathcal{C}_8 & \{ \mathcal{C}_8, \mathcal{C}_{16}, \mathcal{C}_2 \times \mathcal{C}_8 \} \\ \hline
\mathcal{C}_{10} & \{ \mathcal{C}_{10}, \mathcal{C}_2 \times \mathcal{C}_{10} \} \\ \hline
\mathcal{C}_{12} & \{ \mathcal{C}_{12}, \mathcal{C}_2 \times \mathcal{C}_{12} \} \\ \hline
\mathcal{C}_2 \times \mathcal{C}_2 & \{ \mathcal{C}_2 \times \mathcal{C}_2, \mathcal{C}_2 \times \mathcal{C}_4, \mathcal{C}_2 \times \mathcal{C}_6 , \mathcal{C}_2 \times \mathcal{C}_8, \mathcal{C}_2 \times \mathcal{C}_{12} \} \\ \hline
\mathcal{C}_2 \times \mathcal{C}_4 & \{ \mathcal{C}_2 \times \mathcal{C}_4, \mathcal{C}_2 \times \mathcal{C}_8, \mathcal{C}_4 \times \mathcal{C}_4 \} \\ \hline
\mathcal{C}_2 \times \mathcal{C}_6 & \{ \mathcal{C}_2 \times \mathcal{C}_6, \mathcal{C}_2 \times \mathcal{C}_{12} \} \\ \hline
\mathcal{C}_2 \times \mathcal{C}_8 & \{ \mathcal{C}_2 \times \mathcal{C}_8 \} \\ \hline
\end{array}$$
\end{theorem}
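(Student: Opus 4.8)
The plan is to fix $G \in \Phi(1)$ and pin down $\Phi_{\mathbb{Q}}(2,G)$ by first bounding the candidates and then realizing each survivor. Throughout write $F = \mathbb{Q}(\sqrt{d})$ with $\Gal(F/\mathbb{Q}) = \langle \sigma \rangle$, and let $E^{d}$ be the quadratic twist of $E$ by $d$. Two ambient constraints apply at once: $E(F)_{\text{tors}}$ must lie in Najman's list for rational curves over quadratic fields (Theorem \ref{quadcase}), and it must contain $G = E(\mathbb{Q})_{\text{tors}}$. Intersecting these already leaves only finitely many candidate groups for each $G$; the real work is to prune this list and to exhibit curves attaining the survivors.

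The key structural input is the eigenspace decomposition of torsion in a quadratic field. For $P \in E(F)$ of odd order one writes $P = \tfrac12\bigl((P + \sigma P) + (P - \sigma P)\bigr)$; the first summand is $\sigma$-fixed, hence lies in $E(\mathbb{Q})$, while the second lies in the $(-1)$-eigenspace, which the twisting isomorphism $E_{F} \cong E^{d}_{F}$ identifies with $E^{d}(\mathbb{Q})$. This gives $E(F)_{\text{tors}}^{(2')} \cong E(\mathbb{Q})_{\text{tors}}^{(2')} \oplus E^{d}(\mathbb{Q})_{\text{tors}}^{(2')}$ on odd parts, so the odd part of $E(F)_{\text{tors}}$ is a direct sum of two groups each drawn from $\{\mathcal{C}_1, \mathcal{C}_3, \mathcal{C}_5, \mathcal{C}_7, \mathcal{C}_9\}$ (the odd Mazur groups), hence severely restricted. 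When a candidate requires full $m$-torsion I would invoke the Weil pairing, which forces $\zeta_{m} \in F$ and hence $\varphi(m) \mid 2$, leaving only $m \in \{2,3,4,6\}$ and, after intersecting with Theorem \ref{quadcase}, only $\mathcal{C}_2\times\mathcal{C}_2$, $\mathcal{C}_3\times\mathcal{C}_3$, $\mathcal{C}_4\times\mathcal{C}_4$; when a candidate has a cyclic odd $n$-part, a Galois-stable $\langle P \rangle$ of order $n$ gives a rational $n$-isogeny, pruned by Theorem \ref{isogoverQ}.

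The main obstacle is the $2$-power torsion, where the twist decomposition is useless: the quadratic character is trivial modulo $2$, so $E$ and $E^{d}$ share the same mod-$2$ representation and hence the same $2$-torsion field. Here I would work with an explicit model $y^2 = (x-\alpha)(x-\beta)(x-\gamma)$ and the halving criterion of Lemma \ref{halvingpoint}: a given $2$-power point is the double of an $F$-rational point exactly when three prescribed $x$-coordinate differences are simultaneously squares in $F$, a condition that a single quadratic extension can meet only in restricted patterns. Combined with the field-of-definition analysis of $E[2]$ (as in Lemma \ref{2torsionoverQ}, which already forces the $2$-part to stay trivial when $G = \mathcal{C}_1$), this determines exactly which $2$-power groups are reachable---the step that, for instance, permits $\mathcal{C}_2 \to \mathcal{C}_2\times\mathcal{C}_2, \mathcal{C}_4, \mathcal{C}_8, \mathcal{C}_{16}$ but forbids $\mathcal{C}_2 \to \mathcal{C}_2\times\mathcal{C}_4$---and it carries the bulk of the casework.

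Finally, to upgrade the containment $\Phi_{\mathbb{Q}}(2,G) \subseteq (\text{candidate list})$ to equality, I would realize each surviving group explicitly: beginning with a curve whose $E(\mathbb{Q})_{\text{tors}} \cong G$, adjoin the square root defining the field over which a prescribed extra torsion (or twist) point becomes rational, and verify via division polynomials that nothing further is gained. This realization is mechanical once a witness curve is located, but it is essential to the ``is exactly'' assertion of the theorem.
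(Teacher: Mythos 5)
First, a point of comparison: the paper you are working against does not prove this statement at all --- it is imported verbatim as Theorem 2 of Gonz\'{a}lez-Jim\'{e}nez--Tornero \cite{gonztorn}, so there is no internal proof to measure your argument against. Judged on its own terms, your outline follows the same broad strategy as the cited source: intersect with Najman's ambient classification (Theorem \ref{quadcase}), decompose the odd-order part via the quadratic twist (Lemma \ref{twist}), prune with the Weil pairing and the rational-isogeny classification (Theorem \ref{isogoverQ}), and treat the $2$-primary part separately. That skeleton is sound, and the odd-order analysis you describe genuinely does close those cases --- it is exactly why, say, $\mathcal{C}_9$ and $\mathcal{C}_{21}$ cannot arise from $\mathcal{C}_3$, since $E(F)[n]\cong E(\mathbb{Q})[n]\oplus E^{d}(\mathbb{Q})[n]$ forces the odd part to be a sum of two Mazur-admissible odd groups that also lands in Najman's list.

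The gap is that the $2$-primary casework --- which you yourself flag as carrying ``the bulk of the casework'' --- is never actually performed, and that is precisely where the content of the table lives. Nothing in your write-up explains, for instance, why $\mathcal{C}_2\times\mathcal{C}_4\notin\Phi_{\mathbb{Q}}(2,\mathcal{C}_2)$ while $\mathcal{C}_2\times\mathcal{C}_2$, $\mathcal{C}_8$, and $\mathcal{C}_{16}$ all belong to it, or why $\Phi_{\mathbb{Q}}(2,\mathcal{C}_2\times\mathcal{C}_8)=\{\mathcal{C}_2\times\mathcal{C}_8\}$ admits no growth whatsoever. The halving criterion (Lemma \ref{halvingpoint}) is the right tool, but invoking it in the abstract does not produce these exclusions: one must actually run the square-class computations over $F=\mathbb{Q}(\sqrt{d})$, together with the constraint of Theorem \ref{8qisogs} on the number of rational cyclic subgroups, case by case for each $G$ --- this is the kind of computation the present paper carries out in its Lemma \ref{no20} and the order-$24$ lemma, and it is nontrivial in each instance. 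Until that is done you have only a containment of $\Phi_{\mathbb{Q}}(2,G)$ in a candidate list strictly larger than the one in the table, not the asserted equality; the realization step you sketch at the end is fine as far as it goes, but it cannot substitute for the missing exclusions.
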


The next theorem limits the number of $\mathbb{Q}$-isogenies a particular elliptic curve over $\mathbb{Q}$ can have.  

\begin{theorem}[Kenku \cite{kenku} Theorem 2]\label{8qisogs}
There are at most 8 $\Q$-isomorphism classes of elliptic curves in each $\Q$-isogeny class.
\end{theorem}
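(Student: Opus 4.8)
This statement is quoted from Kenku as a black box, so I only sketch how one would prove it from the isogeny-classification input already in hand (Theorem \ref{isogoverQ}). The plan is to encode the $\Q$-isogeny class of $E$ as a finite connected graph $\mathcal{G}$ whose vertices are the $\Q$-isomorphism classes $\Q$-isogenous to $E$ and whose edges join two classes admitting a cyclic $\Q$-rational isogeny of prime degree $\ell$. First I would record that every curve in the class is $\Q$-isomorphic to a quotient $E/C$ by a cyclic $\GQ$-stable subgroup $C \leq E$, and that any two vertices are joined by a single cyclic isogeny: a composite of isogenies of pairwise coprime degrees is again cyclic, and a cyclic isogeny of composite degree $N$ factors into prime-degree steps through the characteristic (hence $\GQ$-stable) subgroups of its cyclic kernel. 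By Theorem \ref{isogoverQ} every such cyclic degree divides a fixed bound, so only the primes $\ell \in \{2,3,5,7,11,13,17,19,37,43,67,163\}$ can label edges, and each maximal chain of $\ell$-isogenies has length bounded by the largest $k$ for which $X_0(\ell^{k})$ carries a non-cuspidal $\Q$-rational point.

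Next I would analyze $\mathcal{G}$ one prime at a time. Fixing $\ell$, the $\ell$-labelled edges at a single vertex correspond to the $\GQ$-stable lines in $E[\ell]$, that is, to the points of $\PP^1(\F_\ell)$ fixed by the image of $\GQ$ in $\PGL_2(\F_\ell)$. For $\ell \geq 3$ a nontrivial image fixes at most two such points (three fixed points would force a trivial image, hence $\Q(\zeta_\ell) = \Q$, impossible), so the branching is at most $2$; for $\ell = 2$ the three order-two subgroups of $E[2]$ allow branching up to $3$. Combining this with the chain-length bounds extracted from $X_0(\ell^k)$ (non-cuspidal rational points only for $\ell^k \in \{2,4,8,16\}$, $\{3,9,27\}$, $\{5,25\}$, and $\ell$ itself otherwise) caps each local component $\mathcal{G}_\ell$ at a small explicit size.

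I would then control the interaction of distinct primes. If three distinct primes $\ell_1,\ell_2,\ell_3$ labelled edges anywhere in $\mathcal{G}$, then composing the corresponding coprime-degree steps along a path would yield a cyclic $\Q$-rational isogeny of degree divisible by $\ell_1\ell_2\ell_3 \geq 30$, which Theorem \ref{isogoverQ} forbids; hence at most two primes occur. Since $E[\ell_1^{\infty}]$ and $E[\ell_2^{\infty}]$ are independent $\GQ$-modules, the cyclic $\GQ$-stable subgroups of $E$ decompose as products of their primary parts, so the count of vertices is dominated by the product of the two local counts. Multiplying the per-prime bounds, discarding the combinations excluded by the classification of rational points on the relevant $X_0(N)$, and reading off the maximum then yields the claim, with the bound $8$ shown to be sharp by exhibiting an admissible configuration that attains it.

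The hard part will be the final combinatorial step. The naive product of per-prime subgroup counts is genuinely larger than $8$ (a full rational $2$-torsion vertex together with a long $2$-power chain already overshoots), because distinct cyclic subgroups can produce the \emph{same} quotient curve; the sharp constant $8$ only emerges once one passes from counting $\GQ$-stable cyclic subgroups to counting isomorphism classes. Making that reduction precise requires a careful case analysis of the image of $\GQ$ in $\GL_2(\Z/N\Z)$ — in particular ruling out the simultaneous occurrence of heavy branching and long chains — and this is exactly the technical core of Kenku's argument.
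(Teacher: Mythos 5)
The paper does not prove this statement at all: it is imported verbatim from Kenku's paper as a black-box input, used later only to bound the number of $\Q$-rational cyclic subgroups of a curve with a $5$- or $3$-isogeny. So there is no internal proof to compare yours against; the only question is whether your sketch would actually establish the bound $8$. It would not, and you say so yourself. Everything up to your last paragraph shows only that the isogeny graph is finite, carries at most two prime labels, and has bounded local $\ell$-components; the resulting naive product bound genuinely exceeds $8$. The entire quantitative content of the theorem --- that the number of $\Q$-isomorphism classes, as opposed to the number of $\GQ$-stable cyclic subgroups of a fixed curve, never exceeds $8$ --- lives in the case analysis you defer. That analysis is not routine tidying-up: it amounts to enumerating the admissible isogeny-degree configurations (eight $2$-power isogenies; four $2$-power together with two $3$-power; etc., exactly the enumeration the paper alludes to right after the theorem statement), which in turn requires determining the rational points of several $X_0(N)$ for composite $N$ (e.g.\ $N=20,24,30,36,48$) and ruling out the coexistence of heavy branching with long chains. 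Without that step you have proved finiteness with some constant larger than $8$, not the stated theorem.

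Two smaller points. First, your justification that for $\ell \ge 3$ at most two lines of $E[\ell]$ are $\GQ$-stable reaches the right conclusion, but the parenthetical reason is off: a trivial image in $\PGL_2(\F_\ell)$ means the mod-$\ell$ image is scalar, which does not literally force $\Q(\zeta_\ell)=\Q$; the actual contradiction is that the determinant, namely the surjective mod-$\ell$ cyclotomic character, would then be the square of an $\F_\ell^{\times}$-valued character and hence non-surjective for odd $\ell$. Second, your vertices are isomorphism classes while your counting is by cyclic $\GQ$-stable subgroups of one fixed $E$; since distinct subgroups can yield isomorphic quotients, the dictionary between the two counts has to be set up before the product bound even makes sense --- and, as you note, this identification is precisely where the drop from the naive bound to $8$ occurs. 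Neither point is fatal, but together with the deferred endgame they mean the proposal is a roadmap to Kenku's argument rather than a proof of the statement.
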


In particular, the various combinations of isogenies are classified in that paper.  For example there are only 8 $\mathbb{Q}$-isogeny classes of an elliptic curve if there are eight 2-powered isogenies or four 2-powered isogenies and two 3-powered isogenies (see the reference for more detail).  

The final lemma in this section relate torsion in a quadratic extension to torsion over the base field.

\begin{lemma}\label{twist}
Let $E$ be an elliptic curve defined over $F$, $\alpha$ square-free in $F$, $K = F(\sqrt{\alpha})$.  If $n$ is odd, then there exists an isomorphism
$$
E(K)[n] \cong E(F)[n] \oplus E^{\alpha}(F)[n].
$$
\end{lemma}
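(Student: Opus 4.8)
The plan is to exploit the quadratic twisting isomorphism together with the eigenspace decomposition of the $n$-torsion under $\Gal(K/F)$, which is available precisely because $n$ is odd. Write $\Gal(K/F) = \langle \tau \rangle$, where $\tau$ sends $\sqrt{\alpha} \mapsto -\sqrt{\alpha}$. Fixing a Weierstrass model $y^2 = x^3 + ax + b$ for $E$, the twist $E^{\alpha}$ has model $y^2 = x^3 + a\alpha^2 x + b\alpha^3$, and there is an isomorphism $\phi \colon E \to E^{\alpha}$ defined over $K$ given by $(x,y) \mapsto (\alpha x,\, \alpha\sqrt{\alpha}\, y)$. The single most important computation is the behaviour of $\phi$ under Galois: applying $\tau$ to the coefficients of $\phi$ negates the $y$-coordinate scaling, so $\phi^{\tau} = -\phi$ (that is, $\phi^{\tau} = [-1]\circ\phi$). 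Everything else follows formally from this one identity together with the standard compatibility $\tau(\phi(P)) = \phi^{\tau}(\tau P)$.

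Next I would decompose $E(K)[n]$ into $\tau$-eigenspaces. Since $\tau^2 = 1$ and $n$ is odd, $2$ is invertible modulo $n$, so every $P \in E(K)[n]$ splits as $P = \tfrac{1}{2}(P + \tau P) + \tfrac{1}{2}(P - \tau P)$, giving
$$E(K)[n] = E(K)[n]^{+} \oplus E(K)[n]^{-},$$
where $E(K)[n]^{\pm} = \{P : \tau P = \pm P\}$. The $+$-eigenspace is immediate: since $K/F$ is Galois, the $\tau$-fixed points of $E(K)$ are exactly $E(F)$, so $E(K)[n]^{+} = E(F)[n]$.

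Then I would identify the $-$-eigenspace with $E^{\alpha}(F)[n]$ via $\phi$. For $P \in E(K)[n]^{-}$ one computes $\tau(\phi(P)) = \phi^{\tau}(\tau P) = (-\phi)(-P) = \phi(P)$, using that $\phi$ is a group homomorphism; hence $\phi(P)$ is $\tau$-fixed and lies in $E^{\alpha}(F)[n]$. Conversely, $(\phi^{-1})^{\tau} = -\phi^{-1}$ shows that $\phi^{-1}$ carries $E^{\alpha}(F)[n]$ back into $E(K)[n]^{-}$. As $\phi$ is an isomorphism of the underlying groups and preserves order, it restricts to an isomorphism $E(K)[n]^{-} \xrightarrow{\sim} E^{\alpha}(F)[n]$. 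Combining the two eigenspace identifications yields $E(K)[n] \cong E(F)[n] \oplus E^{\alpha}(F)[n]$, as claimed.

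The only genuine subtlety, and the place where the hypothesis is used essentially, is the oddness of $n$: without invertibility of $2$ modulo $n$ the eigenspace decomposition can fail, since there could be a nonsplit extension between the fixed part and the $-$-eigenspace, so the direct-sum conclusion is special to odd $n$. By contrast, the twisting identity $\phi^{\tau} = -\phi$ is a short explicit calculation, and the remaining verifications are routine bookkeeping with the Galois action on morphisms.
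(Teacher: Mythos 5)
Your argument is correct. The paper itself does not prove this lemma; it simply cites Corollary 1.3(ii) and Lemma 1.4(ii) of Laska--Lorenz, so there is no in-text proof to compare against. Your self-contained argument is the standard one and is sound at every step: the key identity $\phi^{\tau} = [-1]\circ\phi$ for the twisting isomorphism $(x,y)\mapsto(\alpha x, \alpha\sqrt{\alpha}\,y)$ is verified correctly, the eigenspace splitting $E(K)[n] = E(K)[n]^{+}\oplus E(K)[n]^{-}$ is exactly where the oddness of $n$ enters (invertibility of $2$ modulo $n$, and triviality of the intersection of the two eigenspaces since a point in both is killed by $2$ and by the odd integer $n$), the identification $E(K)[n]^{+} = E(F)[n]$ uses only that $K/F$ is Galois with group $\langle\tau\rangle$, and the computation $\tau(\phi(P)) = \phi^{\tau}(\tau P) = (-\phi)(-P) = \phi(P)$ correctly identifies $E(K)[n]^{-}$ with $E^{\alpha}(F)[n]$. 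This is essentially the content of the Laska--Lorenz results the paper invokes, so you have in effect reproduced the proof the paper outsources; the only cosmetic caveat is that passing to a short Weierstrass model implicitly uses that $F$ has characteristic different from $2$ and $3$, which is harmless since $F$ is a number field in every application here.
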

\begin{proof}
See Corollary 1.3 (ii) and Lemma 1.4 (ii) in \cite{laskalorenz}
\end{proof}

Finally, in order to examine how the 2-torsion can grow from $\mathbb{Q}$ to a quartic field $K$, the following corollary of Rouse, Zureick-Brown is quite helpful.

\begin{theorem}[Rouse, Zureick-Brown, \cite{2adicimage}, Corollary 1.2]\label{2adicindex}
Let $E$ be an elliptic curve over $\mathbb{Q}$ without complex multiplication.  Then the index of $\rho_{E,2^{\infty}}(\Gal(\overline{\mathbb{Q}}/\mathbb{Q}))$ divides 64 or 96; all such indices occur.  Moreover, the image of $\rho_{E,2^{\infty}}(\Gal(\overline{\mathbb{Q}}/\mathbb{Q}))$ is the inverse image in $\GL_{2}(\mathbb{Z}_2)$ of the image of $\rho_{E,32}(\Gal(\overline{\mathbb{Q}}/\mathbb{Q}))$
\end{theorem}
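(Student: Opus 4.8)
The plan is to use the modular interpretation of $2$-adic images. For every open subgroup $H \le \GL_2(\mathbb{Z}_2)$ that contains $-I$, has surjective determinant $H \to \mathbb{Z}_2^\times$, and contains an element playing the role of complex conjugation (determinant $-1$, trace $0$), one attaches a modular curve $X_H$ defined over $\mathbb{Q}$ whose non-cuspidal, non-CM rational points correspond to elliptic curves $E/\mathbb{Q}$ with $\rho_{E,2^\infty}(\Gal(\overline{\mathbb{Q}}/\mathbb{Q}))$ conjugate into $H$. One restricts to subgroups containing $-I$ because quadratic twisting preserves this class. Under this dictionary the two assertions become: (i) the lattice of possible images stabilizes at level $32$, and (ii) the only maximal proper images have index $64$ or $96$, each realized by an actual rational point on the corresponding $X_H$.

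First I would prove the group-theoretic level bound: that any closed subgroup $G \le \GL_2(\mathbb{Z}_2)$ occurring as a $2$-adic image is the full preimage of its reduction modulo $32$. The mechanism is the congruence filtration of $\GL_2(\mathbb{Z}_2)$, whose kernels of reduction $\GL_2(\mathbb{Z}_2) \to \GL_2(\mathbb{Z}/2^n)$ have elementary-abelian successive quotients acted on by $G$; a commutator/Frattini argument shows that for $n \ge 5$ each such layer is forced to lie inside $G$ as soon as $G$ contains the preceding one. This collapses the infinite problem to a finite search inside $\GL_2(\mathbb{Z}/32)$, and is precisely the ``moreover'' clause of the statement.

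Then I would enumerate the candidate subgroups of $\GL_2(\mathbb{Z}/32)$ subject to the determinant and complex-conjugation constraints, organizing them by the inclusions $H \le H'$ into a tree of finite covers $X_{H'} \to X_H$, and for each node compute the genus of $X_H$ and, decisively, the set $X_H(\mathbb{Q})$. Genus-$0$ curves are handled by testing for a rational point and parametrizing; genus-$1$ curves by computing the Mordell--Weil group of the Jacobian and performing an explicit descent. The main obstacle is the curves of genus $\ge 2$: there one must invoke Faltings for finiteness and then pin the rational points down exactly via Chabauty--Coleman and the Mordell--Weil sieve, or via explicit maps to curves of smaller genus. Showing that no such higher curve carries a new non-cuspidal, non-CM point beyond level $32$ is what simultaneously forces the level bound to be sharp and isolates the index-$64$-or-$96$ dichotomy; exhibiting explicit elliptic curves for each maximal $H$ then confirms that all listed indices actually occur.
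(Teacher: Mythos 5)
This statement is not proved in the paper at all: it is imported verbatim as Corollary 1.2 of Rouse--Zureick-Brown \cite{2adicimage} and used as a black box, so there is no in-paper argument to compare yours against. Measured against the actual proof in \cite{2adicimage}, your outline captures the overall strategy correctly: attach modular curves $X_H$ to open subgroups $H\le\GL_2(\mathbb{Z}_2)$ containing $-I$ with surjective determinant and an element of determinant $-1$ and trace $0$, organize them into a tree of covers, and determine $X_H(\mathbb{Q})$ by genus-dependent methods (parametrization in genus $0$, Mordell--Weil computations in genus $1$, Chabauty--Coleman and the Mordell--Weil sieve, plus maps to lower-genus quotients, in genus $\ge 2$). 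That is indeed how the theorem is established.

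There is, however, one genuine gap in your plan: the ``moreover'' clause (the image is the full preimage of its reduction mod $32$) cannot be obtained by the congruence-filtration/Frattini argument you describe. No purely group-theoretic mechanism forces an open subgroup of $\GL_2(\mathbb{Z}_2)$ satisfying the determinant and complex-conjugation constraints to have level dividing $32$: for every $n$ the preimage of the upper-triangular matrices mod $2^n$ (the group corresponding to $X_0(2^n)$) satisfies all of these constraints and has level exactly $2^n$. The Frattini-type statements that do hold for $\GL_2(\mathbb{Z}_2)$ only say that a closed subgroup surjecting onto $\GL_2(\mathbb{Z}/8\mathbb{Z})$ is everything; they say nothing about general proper subgroups. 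In \cite{2adicimage} the level bound is an \emph{output} of the arithmetic classification, not an input: one first identifies the subgroups $H$ with $X_H(\mathbb{Q})$ infinite, observes that the arithmetically maximal subgroups (maximal among those not in this list) all have level dividing $32$, and then verifies that the corresponding higher-level curves carry no non-cuspidal, non-CM rational points. Your last paragraph gestures at this, but it contradicts the claim that the level bound is settled first by group theory; as written, step one of your proof would fail. A smaller omission: since $X_H$ only detects the image up to the $\pm I$ ambiguity, passing from index bounds for subgroups containing $-I$ (namely $32$ or $48$) to the stated bounds $64$ or $96$ for the actual image requires the additional twisting analysis carried out in \cite{2adicimage}.
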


\section{Torsion over cyclic quartic number fields}\label{cyclicquadproof}

In this section we give the proof of Theorem \ref{cyclicquartic} in a series of propositions and lemmas.  We will first bound the structure of the $p$-Sylow subgroups of $E(K)_{\text{tors}}$ for cyclic quartic number fields $K$ and elliptic curves $E/\mathbb{Q}$.  Then, we will prove certain order torsion points cannot appear in $E(K)$.  Finally, we will provide examples of elliptic curves over $\mathbb{Q}$ that achieve each prescribed torsion subgroup over a cyclic quartic number field $K$.

We narrow down the possible torsion subgroups for $E(K)$ with the next proposition:

\begin{prop}\label{maximalEKtors} Let $E$ be an elliptic curve over $\mathbb{Q}$.  Let $K$ a cyclic quartic extension of $\mathbb{Q}$. Then
$$E(K)_{tors} \subseteq (\mathbb{Z} / 2 \mathbb{Z} \oplus \mathbb{Z} / 16 \mathbb{Z}) \oplus \mathbb{Z} / 9 \mathbb{Z} \oplus (\mathbb{Z} / 5 \mathbb{Z} \oplus \mathbb{Z} / 5 \mathbb{Z}) \oplus \mathbb{Z} / 7 \mathbb{Z} \oplus \mathbb{Z} / 13 \mathbb{Z}.$$
\end{prop}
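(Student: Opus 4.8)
The plan is to bound the $p$-Sylow subgroup of $E(K)_{\text{tors}}$ for each prime $p$ separately, since $E(K)_{\text{tors}}$ is the internal direct sum of its Sylow subgroups; the asserted containment then follows prime by prime. Throughout, let $F$ denote the unique quadratic subfield of the cyclic quartic field $K$, which is totally real by Lemma \ref{quadtotallyreal}, and write $K = F(\sqrt{\alpha})$ with $\alpha$ square-free in $F$. I would separate the odd primes, which yield to the twisting Lemma \ref{twist}, from the prime $2$, which is the genuinely hard case.

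For odd $p$: for any odd $n$ the twisting isomorphism of Lemma \ref{twist} gives $E(K)[n] \cong E(F)[n] \oplus E^{\alpha}(F)[n]$, where $E$ is defined over $\mathbb{Q}$ and the twist $E^{\alpha}$ is defined over $F$. Both summands are torsion of elliptic curves over the quadratic field $F$, so I would bound them using the Kamienny--Kenku--Momose classification (Theorem \ref{kkmquad}) for $E^{\alpha}$ and Najman's classification (Theorem \ref{quadcase}) for $E$. Taking $n = p^{k}$: no elliptic curve over a quadratic field has a point of order $25$, $27$, or $49$, nor of prime order $\geq 17$, so neither summand contributes such a point; hence $E(K)$ has none either. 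Combined with Lemma \ref{fullntorsion}, which forbids full $\ell$-torsion over $K$ for $\ell = 3,7,13$ (forcing the $3$-, $7$-, and $13$-parts to be cyclic), this yields the bounds $\mathbb{Z}/9\mathbb{Z}$, $(\mathbb{Z}/5\mathbb{Z})^{2}$, $\mathbb{Z}/7\mathbb{Z}$, $\mathbb{Z}/13\mathbb{Z}$ at $p=3,5,7,13$ and eliminates every prime $\geq 17$. The prime $p=11$ is the one case the twist does not settle, because $E^{\alpha}(F)[11]$ may be nonzero for a curve $E^{\alpha}$ genuinely over $F$; here I would instead invoke Proposition \ref{torsionfromquad} (as $11 \equiv 3 \bmod 4$), which forces a point of order $11$ to lie on $E$ itself over $\mathbb{Q}$ or over $F$, contradicting Mazur and Theorem \ref{quadcase} since $11$ does not occur for rational curves over quadratic fields.

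For $p=2$, the twisting lemma is unavailable. First, Lemma \ref{fullntorsion} forbids full $4$-torsion over $K$, so the $2$-Sylow subgroup is either cyclic $\mathbb{Z}/2^{a}\mathbb{Z}$ or of the form $\mathbb{Z}/2\mathbb{Z} \oplus \mathbb{Z}/2^{b}\mathbb{Z}$. If it is cyclic, then $E(K)[2^{a}]$ is cyclic and $K/\mathbb{Q}$ is Galois, so Lemma \ref{isogeny} produces a $2^{a}$-isogeny over $\mathbb{Q}$, and Theorem \ref{isogoverQ} forces $2^{a} \leq 16$. In the non-cyclic case there are exactly two cyclic subgroups $C_{1}, C_{2}$ of order $2^{b}$, and $\Gal(K/\mathbb{Q}) \cong \mathbb{Z}/4\mathbb{Z}$ permutes them. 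If both are fixed, one is $\Gal(\overline{\mathbb{Q}}/\mathbb{Q})$-stable and again yields a $2^{b}$-isogeny over $\mathbb{Q}$, so $b \leq 4$. If they are swapped, each $C_{i}$ is only stable under $\Gal(K/F)$; analyzing the induced action of the nontrivial element $\sigma^{2}$ of $\Gal(K/F)$ on $C_{1} \cong \mathbb{Z}/2^{b}\mathbb{Z}$, I would show that unless $\sigma^{2}$ acts as $\pm 1$ or $2^{b-1} \pm 1$, the fixed subgroup $E(F) \cap C_{1}$ is large enough to violate Najman's bound on the $2$-part over the quadratic field $F$, once more forcing $b \leq 4$.

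The main obstacle is precisely the residual branch of this non-cyclic $2$-case in which $\sigma^{2}$ acts as $-1$ (or $2^{b-1}-1$) on the high-order cyclic group $C_{1}$: then $E(F)$ only sees the $2$-torsion of $C_{1}$, so neither the quadratic-subfield argument nor the isogeny bound constrains $b$. To close this gap I would appeal to the explicit description of $2$-adic Galois images: by Theorem \ref{2adicindex}, for a non-CM curve the image of $\rho_{E,2^{\infty}}$ is the full preimage of its image modulo $32$, so the maximal $2$-power torsion acquired over a cyclic quartic field is determined at level $32$ and can be read off from the finite list of possible images, confirming that a point of order $32$ never occurs and that $\mathbb{Z}/2\mathbb{Z} \oplus \mathbb{Z}/16\mathbb{Z}$ is extremal; Knapp's halving criterion (Lemma \ref{halvingpoint}) supplies the concrete link between doubling chains and their fields of definition, and the finitely many CM curves are handled directly. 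Assembling the per-prime bounds gives the stated containment.
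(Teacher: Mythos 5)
Your treatment of the odd primes is correct and close in spirit to the paper's: the paper uses the same twist decomposition (Lemma \ref{twist}) at $p=3$ and $p=5$, and disposes of $p=7,13$ and all other primes by citing $S_{\mathbb{Q}}(4)=\{2,3,5,7,13\}$ from \cite{lozanorobledo1} together with Corollary \ref{nopoints}, whereas you get the same bounds from the twist plus the two quadratic classifications and Proposition \ref{torsionfromquad} at $p=11$; either route works. The genuine gap is at $p=2$. In your ``swapped'' sub-case the exceptional list is vacuous: an automorphism of $\mathbb{Z}/2^{b}\mathbb{Z}$ of order dividing $2$ is multiplication by one of $\pm 1$ or $2^{b-1}\pm 1$, so there is no ``unless''; and in the branches where $\sigma^{2}$ acts by $-1$ or $2^{b-1}-1$ the subgroup of $C_{1}$ fixed by $\sigma^{2}$ has order $2$, so the quadratic field $F$ yields no bound on $b$ at all. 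You acknowledge this residual branch, but the appeal to Theorem \ref{2adicindex} is only a gesture: asserting that the answer ``can be read off from the finite list of possible images'' is not an argument, and without first bounding $b$ you would also have to justify why level $32$ controls points of order $64, 128, \dots$ over quartic fields.

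The paper closes exactly this branch in two concrete steps that your plan is missing. First, $2E(K)[2^{\infty}]$ is a cyclic Galois-stable subgroup of order $2^{b-1}$ (stable because $K/\mathbb{Q}$ is Galois and multiplication by $2$ commutes with the action), so the argument of Lemma \ref{isogeny} and Theorem \ref{isogoverQ} give $2^{b-1}\leq 16$, i.e.\ $b\leq 5$, uniformly across all of your sub-cases; this reduces the whole problem to excluding $\mathbb{Z}/2\mathbb{Z}\oplus\mathbb{Z}/32\mathbb{Z}$. Second, for that one group the paper shows $\rho_{E,32}\bigl(\Gal(\overline{\mathbb{Q}}/K)\bigr)$ lies in the subgroup $\mathcal{H}$ of matrices $\begin{pmatrix}1&2x\\0&y\end{pmatrix}$, which has index $1536$ in $\GL(2,\mathbb{Z}/32\mathbb{Z})$, so the full mod-$32$ image would have index at least $1536/4=384$, contradicting the Rouse--Zureick-Brown bound of $96$ for non-CM curves; the finitely many CM possibilities are eliminated by the quartic CM classification of \cite{cmtorsion}. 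Some quantitative step of this kind (or an explicit search of the mod-$32$ image lattice) is indispensable; as written, your proposal does not establish the bound $E(K)[2^{\infty}]\subseteq\mathbb{Z}/2\mathbb{Z}\oplus\mathbb{Z}/16\mathbb{Z}$.
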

\begin{proof}
Let $E(K)[p^{\infty}]$ denote the $p$-Sylow subgroup of $E(K)$. From \cite{lozanorobledo1}, we have that $$S_{\mathbb{Q}}(4)= \{2,3,5,7,13 \}$$ where $S_{\mathbb{Q}}(d)$ is the set of primes $p$ for which there exists a number field $K$ of degree $\leq d$ and an elliptic curve $E / \mathbb{Q}$ such that the order of the torsion subgroup of $E(K)$ is divisible by $p$.  Thus, we have $E(K)[p^{\infty}] = \{\mathcal{O}\}$ for all primes not in $S_{\mathbb{Q}}(4)$.  (Recall that $E(\overline{\mathbb{Q}})[m] \cong \mathbb{Z}/m\mathbb{Z} \oplus \mathbb{Z}/m\mathbb{Z}$ for all $m \in \mathbb{N}$.)  We will proceed by examining the $p$-Sylow subgroup for each prime $p = 2,3,5,7,13$.\\
$\underline{p=2}$\\
From Lemma \ref{fullntorsion} we see that full 4-torsion cannot be defined over $K$.  Thus, $E(K)[2^{\infty}] \subseteq \mathbb{Z} / 2 \mathbb{Z} \oplus \mathbb{Z} / 2^{m} \mathbb{Z}$ for some $m$.  Notice that if $E(K)[2^{\infty}] \cong \mathbb{Z} / 2 \mathbb{Z} \oplus \mathbb{Z} / 2^{m} \mathbb{Z}$, then $2E(K)[2^{\infty}] \cong \mathbb{Z} / 2^{m-1} \mathbb{Z}$ and so $E$ has a $\mathbb{Q}$-rational isogeny of degree $2^{m-1}$.  Thus, by Theorem \ref{isogoverQ}, $m-1 \leq 4$, and so $m \leq 5$.  Thus, we have that $E(K)[2^{\infty}] \subseteq  \mathbb{Z} / 2 \mathbb{Z} \oplus  \mathbb{Z} / 32 \mathbb{Z}$.

Now, suppose $E(K)[2^{\infty}] \cong \mathbb{Z}/2\mathbb{Z} \oplus \mathbb{Z}/32\mathbb{Z}$.  Fix a basis $\{P, Q\}$ of $E[32]$ such that $P \in E(K)$.  Let
$$
\rho : \Gal(\overline{\mathbb{Q}}/\mathbb{Q}) \rightarrow \GL(2,\mathbb{Z}/32\mathbb{Z})
$$
be the Galois representation induced by the action of Galois on $E[32]$.  Consider the image of the subgroup $\Gal(\overline{\mathbb{Q}}/K)$ under $\rho$, call this image $H$.  Notice that $P^{\sigma} = P$ for all $\sigma \in H$ since $P$ is defined over $K$.  Further, for $\sigma \in H$, $Q^{\sigma} = a P + b Q$ for some $a,b \in \mathbb{Z}/32\mathbb{Z}$.  Notice however, that $\{16 P , 16 Q \}$ is a basis for $E[2]$ which is contained in $E(K)$.  Thus, for all $\sigma \in H$:
$$
16 Q = (16 Q)^{\sigma} = 16 Q^{\sigma} = 16aP + 16bQ
$$
so we must have that $a \equiv 0 \bmod 2$ and $b \equiv 1 \bmod 2$.  Therefore
$$
H \subseteq \left\{ \begin{pmatrix} 1 & 2x \\ 0 & y \end{pmatrix} : x \in \mathbb{Z}/32\mathbb{Z} , y \in \mathbb{Z}/32\mathbb{Z}^{\times} \right\}.
$$
Denote $\mathcal{H} := \left\{ \begin{pmatrix} 1 & 2x \\ 0 & y \end{pmatrix} : x \in \mathbb{Z}/32\mathbb{Z} , y \in \mathbb{Z}/32\mathbb{Z}^{\times} \right\}$.

Now let us consider the full image of $\rho$, call this image $G$.  Since $E$ has a 16-isogeny over $\Q$, for any $\sigma \in  \Gal(\overline{\mathbb{Q}}/\mathbb{Q})$ it must be that 
$$
\rho(\sigma) \equiv \begin{pmatrix} a & b \\ 0 & d \end{pmatrix} \bmod 16.
$$
In fact, $a,d \in \left(\mathbb{Z}/32\mathbb{Z}\right)^{\times}$ since $E[2] \subseteq E(K)$ and so $$\rho(\sigma) \equiv \begin{pmatrix} 1 & b \\ 0 & 1 \end{pmatrix} \bmod 2.$$
Therefore:
$$
\rho( \Gal(\overline{\mathbb{Q}}/\mathbb{Q}) ) = G \subseteq \left\{ \begin{pmatrix} a & b \\ 16c & d \end{pmatrix} :a,d \in \mathbb{Z}/32\mathbb{Z}^{\times}, b,c \in \mathbb{Z}/32\mathbb{Z} \right\}.
$$
Denote $\mathcal{G} := \left\{ \begin{pmatrix} a & b \\ 16c & d \end{pmatrix} :a,d \in \mathbb{Z}/32\mathbb{Z}^{\times}, b,c \in \mathbb{Z}/32\mathbb{Z} \right\}$.

Suppose there exist an elliptic curve $E$ defined over $\mathbb{Q}$ such that $H \leq \mathcal{H}$ and $G \leq \mathcal{G}$, with $H$ normal in $G$, and $G/H$ isomorphic to $\mathbb{Z}/4\mathbb{Z}$.

If $E/\mathbb{Q}$ does not have CM, then Theorem \ref{2adicindex} shows that the index of $G$ in $\GL(2,\mathbb{Z}/32\mathbb{Z})$ has index dividing 64 or 96.  However,
$$
1536 = [\GL(2,\mathbb{Z}/32\mathbb{Z}) : \mathcal{H}] \leq [\GL(2,\mathbb{Z}/32\mathbb{Z}) : H]
$$
and since $[G:H] = 4$ this gives a lower bound on the index of $G$:
$$[\GL(2,\mathbb{Z}/32\mathbb{Z}) : G] = [GL(2,\mathbb{Z}/32\mathbb{Z}) : H] / [G : H] \geq \frac{1536}{4} = 384$$
and so such a curve does not exist.\\
If $E/\mathbb{Q}$ does have CM, then \cite{cmtorsion} gives a list of all possible isomorphism classes for $E(K)_{\text{tors}}$ for $K$ a quartic field (note that this list is for $E/K$ not necessarily $E/\mathbb{Q}$, and $K$ is any quartic field not necessarily cyclic quartic):
$$
E(K)_{\text{tors}} \in 
\begin{cases} 
\mathbb{Z}/m\mathbb{Z} & \mbox{for } N_1=1, \ldots, 8,10,12,13,21,\\
\mathbb{Z}/2\mathbb{Z} \oplus \mathbb{Z}/2N_2\mathbb{Z} & \mbox{for } N_2=1, \ldots, 5,\\
\mathbb{Z}/3\mathbb{Z} \oplus \mathbb{Z}/3N_3\mathbb{Z} & \mbox{for } N_3=1,2,\\
\mbox{ and } & \mathbb{Z}/4\mathbb{Z} \oplus \mathbb{Z}/4\mathbb{Z}
\end{cases}
$$
and notice that $\mathbb{Z}/2\mathbb{Z} \oplus \mathbb{Z}/32\mathbb{Z}$ is not on that list.

Therefore, no such elliptic curve exists, and so $E(K)[2^{\infty}] \subseteq \mathbb{Z}/2\mathbb{Z} \oplus \mathbb{Z}/16\mathbb{Z}$.\\

$\underline{p=3}$\\
Full 3-torsion cannot be defined over $K$ by Lemma \ref{fullntorsion}.  By Corollary \ref{nopoints} it follows that $E(K)[3^{\infty}] \subseteq \mathbb{Z} / 27 \mathbb{Z}$.  By Lemma \ref{twist}, we have that $E(K)[27] \cong E(F)[27] \oplus E^{\alpha}(F)[27]$ for the quadratic intermediate field $F\subseteq F(\sqrt{\alpha})=K$ for some square-free $\alpha \in F$.  The curve $E^{\alpha}$ may not be defined over $\mathbb{Q}$, but it is certainly defined over $F$.  By the classification of torsion subgroups of elliptic curves defined over quadratics (Theorem \ref{kkmquad}), it follows that neither $E^{\alpha}(F)$ nor $E(F)$ have any points of order 27.  Thus, $E(K)$ does not have any points of order 27, and so $E(K)[3^{\infty}] \subseteq \mathbb{Z} / 9 \mathbb{Z}$.\\

$\underline{p=5}$\\
It is possible for full 5-torsion to be defined over $K$ in the case that $K = \mathbb{Q}(\zeta_{5})$. By Lemma \ref{twist}, we have $E(K)[25] \cong E(F)[25] \oplus E^{\alpha}(F)[25]$ for the quadratic intermediate field $F\subseteq F(\sqrt{\alpha})=K$ for some square-free $\alpha \in F$.  The curve $E^{\alpha}$ may not be defined over $\mathbb{Q}$, but it is certainly defined over $F$.  By the classification of torsion subgroups of elliptic curves defined over quadratics (Theorem \ref{kkmquad}), we have that $E^{\alpha}(F)[25] \cong \mathbb{Z} / 5 \mathbb{Z} \mbox{ or } 0$.  Similarly, $E(F)[25] \cong \mathbb{Z} / 5 \mathbb{Z} \mbox{ or } 0$, and thus $E(K)[5^{\infty}] = E(K)[25] \subseteq \mathbb{Z} / 5 \mathbb{Z} \oplus \mathbb{Z} / 5 \mathbb{Z}$.\\

$\underline{p=7}$\\
Again notice that we cannot have full 7-torsion defined over $K$ by Lemma \ref{fullntorsion}. Now, from Corollary \ref{nopoints}, it follows that $E(K)$ has no points of order 49.  Therefore, $E(K)[7^{\infty}] \subseteq \mathbb{Z} / 7 \mathbb{Z}$.\\

$\underline{p=13}$\\
Again notice that we cannot have full 13-torsion defined over $K$ by Lemma \ref{fullntorsion}.  Again, from Corollary \ref{nopoints}, it follows that $E(K)$ has no points of order 169, so in fact $E(K)[13^{\infty}] \subseteq \mathbb{Z} / 13 \mathbb{Z}$.\\
\end{proof}

We wish to see what torsion subgroups are possible when $\mathbb{Z}/5\mathbb{Z} \oplus \mathbb{Z}/5\mathbb{Z} \subseteq E(K)_{tors}$, but notice this is only possible when $K = \mathbb{Q}(\zeta_5)$.  The following theorem of Bruin and Najman classifies all torsion subgroups that appear over $\mathbb{Q}(\zeta_5)$:

\begin{theorem}[Bruin, Najman, \cite{bruinnajman}, Theorem 5]\label{Najman5}
Let $E$ be an elliptic curve over $\mathbb{Q}(\zeta_5)$.  Then $E(\mathbb{Q}(\zeta_5))_{tors}$ is isomorphic to a subgroup included in Mazur's list over $\mathbb{Q}$ or:
$$
\mathbb{Z}/15\mathbb{Z}, \mathbb{Z}/16\mathbb{Z}, \mathbb{Z}/5\mathbb{Z} \oplus \mathbb{Z}/5\mathbb{Z}.
$$
\end{theorem}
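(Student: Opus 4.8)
The plan is to translate the classification into a problem about rational points on modular curves over the fixed quartic field $\mathbb{Q}(\zeta_5)$. For a finite abelian group $G = \mathbb{Z}/M\mathbb{Z} \oplus \mathbb{Z}/N\mathbb{Z}$ with $M \mid N$, the pairs consisting of an elliptic curve $E$ together with an embedding $G \hookrightarrow E_{\text{tors}}$ are parametrized by the modular curve $X_1(M,N)$; the group $G$ occurs inside $E(\mathbb{Q}(\zeta_5))_{\text{tors}}$ for some $E$ exactly when $X_1(M,N)$ has a non-cuspidal $\mathbb{Q}(\zeta_5)$-rational point. Thus the entire statement reduces to determining $X_1(M,N)(\mathbb{Q}(\zeta_5))$ for each relevant pair $(M,N)$.

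First I would cut the set of candidate groups down to a finite, manageable list. Since $[\mathbb{Q}(\zeta_5):\mathbb{Q}] = 4$, the torsion of any elliptic curve over $\mathbb{Q}(\zeta_5)$ is constrained by the known bounds on torsion over degree-$4$ fields, which already limits both the primes dividing $\#E(\mathbb{Q}(\zeta_5))_{\text{tors}}$ and the possible group shapes. To this I would adjoin the Weil-pairing constraint special to this field: full $n$-torsion forces $\mu_n \subseteq \mathbb{Q}(\zeta_5)$, and $\mu_n \subseteq \mathbb{Q}(\zeta_5)$ if and only if $n \mid 10$. This instantly excludes $\mathbb{Z}/3\mathbb{Z} \oplus \mathbb{Z}/3\mathbb{Z}$, $\mathbb{Z}/4\mathbb{Z} \oplus \mathbb{Z}/4\mathbb{Z}$, and $\mathbb{Z}/6\mathbb{Z} \oplus \mathbb{Z}/6\mathbb{Z}$, while $\mathbb{Z}/10\mathbb{Z} \oplus \mathbb{Z}/10\mathbb{Z}$ (full $10$-torsion) passes the root-of-unity test and must be ruled out separately. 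What remains is a finite list of candidate $G$ not already on Mazur's list: the cyclic groups $\mathbb{Z}/N\mathbb{Z}$ for the larger values of $N$ that occur over general quartic fields (such as $N = 11, 13, 14, 17, 18, 20, 21, 22, 24$) and the groups $\mathbb{Z}/2\mathbb{Z} \oplus \mathbb{Z}/2N\mathbb{Z}$ with $5 \le N \le 9$, each of which must be either realized or excluded over $\mathbb{Q}(\zeta_5)$.

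For each candidate pair I would then compute $X_1(M,N)(\mathbb{Q}(\zeta_5))$ according to the genus. When $X_1(M,N)$ has genus $0$ and a rational point it is isomorphic to $\mathbb{P}^1$ and produces infinitely many curves realizing $G$; this delivers the positive assertions, including $\mathbb{Z}/15\mathbb{Z}$, $\mathbb{Z}/16\mathbb{Z}$, and $\mathbb{Z}/5\mathbb{Z} \oplus \mathbb{Z}/5\mathbb{Z}$ (the last realized because $\mathbb{Q}(\zeta_5) \supseteq \mu_5$, so $X(5)$ gains $\mathbb{Q}(\zeta_5)$-points here). When the genus is $1$, $X_1(M,N)$ becomes an elliptic curve over $\mathbb{Q}(\zeta_5)$ and I would compute its Mordell-Weil group; if the rank is $0$ the rational points are torsion, can be enumerated, and one checks they are all cusps. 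When the genus is $\ge 2$, Faltings guarantees finiteness, and I would pin down the points explicitly over $\mathbb{Q}(\zeta_5)$ using rank computations on the Jacobian, Chabauty-type arguments, and a Mordell-Weil sieve, together with reduction modulo primes of $\mathbb{Q}(\zeta_5)$ to constrain any hypothetical non-cuspidal point.

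The main obstacle is exactly this last step for the high-genus modular curves. Over $\mathbb{Q}$ one could invoke the determinations of Mazur and Kenku directly, but here the points must genuinely be found over the quartic field $\mathbb{Q}(\zeta_5)$, so the rational classification cannot simply be quoted; the difficulty is to carry out provably complete point searches (rank bounds plus Chabauty or sieving) on each relevant $X_1(N)$ of genus $\ge 2$ over $\mathbb{Q}(\zeta_5)$. The borderline full-$10$-torsion case is equally delicate, since it survives the root-of-unity obstruction and one must show that the associated curve $X(5)$- or $X_1(2,10)$-type structure still prevents $\mathbb{Z}/10\mathbb{Z} \oplus \mathbb{Z}/10\mathbb{Z}$ from being attained over this field.
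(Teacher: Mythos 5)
This statement is not proved in the paper at all: it is imported verbatim as Theorem~5 of Bruin--Najman \cite{bruinnajman} and used as a black box (e.g.\ to rule out points of order $35$, $45$, $65$ and full $10$-torsion). So there is no in-paper proof to compare yours against; the only fair comparison is with the cited source, whose method (reducing to $\mathbb{Q}(\zeta_5)$-rational points on modular curves and ruling them out via the Mordell--Weil groups of their Jacobians together with reduction modulo small primes) is indeed the same general strategy you outline.

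As a proof, however, your text is a plan rather than an argument, and two of its load-bearing steps are not actually supplied. First, the reduction to a finite candidate list is not as automatic as you suggest: as the introduction of this very paper emphasizes, there is no complete classification of torsion over quartic fields to quote, only the list of groups occurring infinitely often (Jeon--Kim--Park) and the set of primes $S_{\mathbb{Q}}(4)$ together with general bounds; you must separately bound the power of each prime that can divide $\#E(\mathbb{Q}(\zeta_5))_{\mathrm{tors}}$ (via rational isogeny classification, twisting as in Lemma~\ref{twist}, etc.) before the list of pairs $(M,N)$ is finite, and you must also address sporadic groups not on the ``infinitely often'' list. Second, every case that is actually hard --- the genus~$\geq 2$ curves $X_1(N)$ for $N=13,16,17,18,20,21,22,24$ over a quartic field, and the full $10$-torsion case that survives the Weil-pairing test --- is deferred to ``rank computations, Chabauty, and a Mordell--Weil sieve'' without being carried out; Chabauty over $\mathbb{Q}(\zeta_5)$ and provably sharp rank bounds for these Jacobians are precisely where all the content of Bruin--Najman's theorem lives. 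The framework is right, but nothing in the candidate list is actually excluded or realized by what you have written, so this cannot stand as a proof of the statement.
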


In particular, if $\mathbb{Z}/5\mathbb{Z} \oplus \mathbb{Z}/5\mathbb{Z} \subseteq E(K)_{tors}$ then, in fact, they are equal.  Now we rule out torsion points of certain order.

\begin{lemma}
Let $K$ be a Galois cyclic degree 4 number field, $E$ an elliptic curve over $\mathbb{Q}$. Then $E(K)$ has no points of order 14 or 18.
\end{lemma}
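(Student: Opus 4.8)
The plan is to reduce both cases to the impossibility of a point of order $14$ or $18$ on $E$ either over $\mathbb{Q}$ or over the intermediate quadratic field $F \subseteq K$, where in both situations $E$ remains a curve defined over $\mathbb{Q}$. Write $m \in \{14, 18\}$ and suppose $E(K)$ has a point of order $m$; then $E(K)[2] \neq \{\mathcal{O}\}$ and $E(K)$ contains a point $P$ of order $q$, with $q = 7$ when $m = 14$ and $q = 9$ when $m = 18$. Since $[K:\mathbb{Q}] = 4$ is not divisible by $3$, Lemma \ref{2torsionoverQ} together with $E(K)[2] \neq \{\mathcal{O}\}$ forces $E(\mathbb{Q})[2] \neq \{\mathcal{O}\}$; in particular $E(F)[2] \neq \{\mathcal{O}\}$.

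Next I would control the field of definition of $P$. For $m = 14$ we have $q = 7 \equiv 3 \bmod 4$ and $q \geq 7$, so Proposition \ref{torsionfromquad} directly gives $P \in E(\mathbb{Q})$ or $P \in E(F)$. For $m = 18$ the prime-order hypothesis of Proposition \ref{torsionfromquad} fails, so instead I would invoke Proposition \ref{quarisog}: since $q = 9$ is odd and prime to $5$, the curve $E$ has a $9$-isogeny over $\mathbb{Q}$, hence $\langle P \rangle$ is stable under $\Gal(\overline{\mathbb{Q}}/\mathbb{Q})$, which therefore acts on $\langle P \rangle$ through a character $\chi$ into $(\mathbb{Z}/9\mathbb{Z})^{\times}$. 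Then $[\mathbb{Q}(P):\mathbb{Q}] = \#\image\chi$ divides both $\varphi(9) = 6$ and $[K:\mathbb{Q}] = 4$, so it equals $1$ or $2$; as $K$ is cyclic quartic it has a unique quadratic subfield $F$, and again $P \in E(\mathbb{Q})$ or $P \in E(F)$.

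To finish I would split on the field of definition of $P$. If $P \in E(\mathbb{Q})$, then $E(\mathbb{Q})$ contains points of orders $2$ and $q$, hence a point of order $m$, contradicting Mazur's theorem. If $P \in E(F)$, then together with $E(F)[2] \neq \{\mathcal{O}\}$ we obtain $\mathbb{Z}/m\mathbb{Z} \hookrightarrow E(F)_{\text{tors}}$ over the quadratic field $F$; since $E$ is defined over $\mathbb{Q}$, Theorem \ref{quadcase} applies, and neither $\mathbb{Z}/14\mathbb{Z}$ nor $\mathbb{Z}/18\mathbb{Z}$ occurs in Najman's list, which is the desired contradiction.

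I expect the order-$18$ case to be the main obstacle, for two reasons. First, Proposition \ref{torsionfromquad} is stated only for primes, so the descent of $P$ to $\mathbb{Q}$ or $F$ must be carried out by hand by extracting the $9$-isogeny and running the character argument. Second, and more delicately, the final contradiction must be drawn from Theorem \ref{quadcase} (rational curves over quadratic fields) rather than the general Kamienny--Kenku--Momose classification of Theorem \ref{kkmquad}, because $\mathbb{Z}/18\mathbb{Z}$ does occur over quadratic fields in general. This is exactly why a twisting argument through Lemma \ref{twist} would not suffice: the twist $E^{\alpha}$ need not be defined over $\mathbb{Q}$, so only the weaker Theorem \ref{kkmquad} would be available for it. Keeping $P$ over a field where $E$ itself is still the rational curve is the crux of the proof.
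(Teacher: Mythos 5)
Your proposal is correct and follows essentially the same route as the paper: reduce to a point of order $q\in\{7,9\}$ whose field of definition has degree dividing $\gcd(6,4)=2$ (via the Galois-stability of $\langle P\rangle$ coming from Proposition \ref{torsionfromquad} resp.\ Proposition \ref{quarisog}), combine with the rational $2$-torsion point forced by Lemma \ref{2torsionoverQ}, and contradict Najman's Theorem \ref{quadcase} over the quadratic subfield. Your remark that the contradiction must come from Theorem \ref{quadcase} rather than Theorem \ref{kkmquad} is exactly the point, and it is also why the paper cites the classification ``of elliptic curves defined over $\mathbb{Q}$'' at that step.
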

\begin{proof}
Suppose $E$ has a point of order 18 over $K$. By Lemma \ref{2torsionoverQ} it follows that $E$ has a point of order 2 over $\mathbb{Q}$.  From Proposition \ref{maximalEKtors}, we see that $E(K)[18] \cong \mathbb{Z}/2\mathbb{Z} \oplus \mathbb{Z}/18 \mathbb{Z}$ or $\mathbb{Z}/18\mathbb{Z}$.  In either case, $E$ has a 9-isogeny over $\mathbb{Q}$. Let $P$ be a point of order 9 in $E(K)$ and let $\sigma \in  \Gal(K/\mathbb{Q})$.  Since $E(K)[9] = \langle P \rangle$ it follows that $P^{\sigma} = a P$ for some $a \in (\mathbb{Z}/9\mathbb{Z})^{\times}$.  Thus, the orbit of $P$ under $ \Gal(K/\mathbb{Q})$ has size dividing 6, and by the Orbit-Stabilizer Theorem and Galois theory, it follows that $[\mathbb{Q}(P):\mathbb{Q}]$ divides 6.  However, since $K$ is degree 4 over $\mathbb{Q}$, and $\mathbb{Q}(P) \subseteq K$, it follows that the field of definition of $P$ is either 1 or 2.  In either case, we have a point of order 9 and a point of order 2 in a quadratic extension of $\mathbb{Q}$, giving us a point of order 18 in a quadratic extension of $\mathbb{Q}$.  By Theorem \ref{quadcase}, the classification of torsion of elliptic curves defined over $\mathbb{Q}$ over quadratic number fields, we see that this is impossible.\\
A similar proof shows that $E(K)$ has no points of order 14 (notice $\left| (\mathbb{Z}/14\mathbb{Z})^{\times} \right| = 6$).
\end{proof}

\begin{lemma}
Let $K$ be a Galois cyclic degree 4 number field, $E$ an elliptic curve over $\mathbb{Q}$.  Then $E(K)$ has no points of order 21, 26, 35, 39, 40, 45, 48, 63, 65, or 91.
\end{lemma}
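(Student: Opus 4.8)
The plan is to reduce a point of order $n$ to points of prime-power order: since $E(K)_{\text{tors}}$ is abelian, it contains a point of order $n=\prod_i p_i^{e_i}$ if and only if it contains a point of order $p_i^{e_i}$ for each $i$. Throughout I use Proposition \ref{maximalEKtors} to bound the Sylow structure, so that $E(K)[2^\infty]\subseteq \mathbb{Z}/2\mathbb{Z}\oplus\mathbb{Z}/16\mathbb{Z}$, $E(K)[3^\infty]\subseteq\mathbb{Z}/9\mathbb{Z}$, $E(K)[7^\infty]\subseteq\mathbb{Z}/7\mathbb{Z}$, and $E(K)[13^\infty]\subseteq\mathbb{Z}/13\mathbb{Z}$; in particular the $3$-, $7$-, and $13$-parts are cyclic (Lemma \ref{fullntorsion} forbids full torsion at these primes), so each yields a $\mathbb{Q}$-rational isogeny of degree equal to its order by Lemma \ref{isogeny}. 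The two engines of the proof are: (a) producing two $\mathbb{Q}$-rational cyclic isogenies of coprime degrees $d_1,d_2$, whose kernels combine to a cyclic $d_1d_2$-isogeny over $\mathbb{Q}$, and then forbidding the degree $d_1d_2$ by Theorem \ref{isogoverQ}; and (b) descending a point to the unique quadratic subfield $F\subseteq K$ and invoking the quadratic classification of Theorem \ref{quadcase}.

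Most orders fall to engine (a). The orders $39=3\cdot13$, $63=9\cdot7$, and $91=7\cdot13$ are odd and prime to $5$, so Proposition \ref{quarisog} gives an $n$-isogeny over $\mathbb{Q}$ directly, and since none of $39,63,91$ is $\le 19$ or lies in $\{21,25,27,37,43,67,163\}$, Theorem \ref{isogoverQ} yields a contradiction. For $26=2\cdot13$, a point of order $2$ forces $E(\mathbb{Q})[2]\neq 0$ by Lemma \ref{2torsionoverQ} (as $[K:\mathbb{Q}]=4$ is prime to $3$), giving a $2$-isogeny, which combines with the $13$-isogeny to a forbidden $26$-isogeny. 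For $48=16\cdot3$, a point of order $16$ forces $E(K)[2^\infty]\in\{\mathbb{Z}/16\mathbb{Z},\ \mathbb{Z}/2\mathbb{Z}\oplus\mathbb{Z}/16\mathbb{Z}\}$, whence $2\,E(K)[2^\infty]\cong\mathbb{Z}/8\mathbb{Z}$ is a Galois-stable cyclic group (exactly as in the proof of Proposition \ref{maximalEKtors}), so $E$ has an $8$-isogeny, which combined with the $3$-isogeny gives a forbidden $24$-isogeny.

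The orders divisible by $5$, namely $35=5\cdot7$, $45=9\cdot5$, $65=5\cdot13$, and $40=8\cdot5$, require a case split on whether $K=\mathbb{Q}(\zeta_5)$. If $K=\mathbb{Q}(\zeta_5)$, then by Theorem \ref{Najman5} the group $E(K)_{\text{tors}}$ lies in Mazur's list or is one of $\mathbb{Z}/15\mathbb{Z},\ \mathbb{Z}/16\mathbb{Z},\ \mathbb{Z}/5\mathbb{Z}\oplus\mathbb{Z}/5\mathbb{Z}$, none of which contains an element of order $35,45,65$, or $40$. If $K\neq\mathbb{Q}(\zeta_5)$, then $\zeta_5\notin K$ forces $E(K)[5]\cong\mathbb{Z}/5\mathbb{Z}$ to be cyclic, so $E$ has a $5$-isogeny by Lemma \ref{isogeny}; combining this with the $7$-, $9$-, or $13$-isogeny produces a forbidden $35$-, $45$-, or $65$-isogeny, while for $40$ a point of order $8$ gives, as above, a Galois-stable cyclic $2\,E(K)[2^\infty]$ of order at least $4$, hence an isogeny of degree at least $20$ once combined with the $5$-isogeny, again forbidden by Theorem \ref{isogoverQ}.

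The remaining and most delicate case is $21=3\cdot7$, which must be handled by engine (b): a $21$-isogeny over $\mathbb{Q}$ is permitted by Theorem \ref{isogoverQ} (since $21$ lies in the exceptional set), so the isogeny argument alone cannot conclude, and I descend instead. The cyclic groups $\langle P_3\rangle$ and $\langle P_7\rangle$ are stable under $\Gal(\overline{\mathbb{Q}}/\mathbb{Q})$ by Proposition \ref{quarisog}, so writing $\Gal(K/\mathbb{Q})=\langle\sigma\rangle$ we have $P_3^\sigma=aP_3$ and $P_7^\sigma=bP_7$ with $a\in(\mathbb{Z}/3\mathbb{Z})^\times$ and $b\in(\mathbb{Z}/7\mathbb{Z})^\times$; the orders of $a$ and $b$ divide $\gcd(4,2)=2$ and $\gcd(4,6)=2$ respectively, so by orbit-stabilizer each of $P_3,P_7$ has field of definition of degree dividing $2$ inside $K$, hence contained in the unique quadratic subfield $F$. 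Then $P_3+P_7\in E(F)$ has order $21$, contradicting Theorem \ref{quadcase}. I expect this descent step for $n=21$ to be the main obstacle, together with the bookkeeping of the $K=\mathbb{Q}(\zeta_5)$ exception for the orders divisible by $5$, since these are precisely the places where the clean isogeny-degree argument breaks down and one must import the finer classifications over quadratic fields and over $\mathbb{Q}(\zeta_5)$.
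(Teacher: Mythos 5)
Your proof is correct, and for the orders $26, 35, 39, 40, 45, 48, 63, 65, 91$ it follows essentially the same route as the paper: use the cyclicity of the relevant Sylow pieces (Proposition \ref{maximalEKtors}, Lemma \ref{fullntorsion}) to produce $\mathbb{Q}$-rational cyclic isogenies via Lemma \ref{isogeny} or Proposition \ref{quarisog}, combine coprime ones into a cyclic isogeny of forbidden degree, and dispose of the full-$5$-torsion possibility via Theorem \ref{Najman5}. The genuine divergence is at $n=21$, which you rightly flag as the case where the isogeny-degree argument cannot close ($21$ lies in the exceptional set of Theorem \ref{isogoverQ}). The paper handles it computationally: a point of order $21$ forces a rational $21$-isogeny, there are only finitely many $j$-invariants of such curves, and a division-polynomial check shows none of them acquires a point of order $21$ over a quadratic or quartic field. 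You instead descend: since $\Gal(K/\mathbb{Q})\cong\mathbb{Z}/4\mathbb{Z}$ maps to $(\mathbb{Z}/3\mathbb{Z})^{\times}$ and to $(\mathbb{Z}/7\mathbb{Z})^{\times}$ with image of order dividing $\gcd(4,2)=2$ and $\gcd(4,6)=2$ respectively, both $P_3$ and $P_7$ are defined over the unique quadratic subfield $F$, so $E(F)$ contains a point of order $21$, contradicting Theorem \ref{quadcase}. This is exactly the mechanism the paper uses to eliminate orders $14$ and $18$, and your observation that it also applies to $21$ (because $3$ and $7$ both have multiplicative groups of order prime to $4$ up to a factor of $2$) replaces the computation with a conceptual argument valid for cyclic quartic $K$; what the paper's computation buys in exchange is that it excludes order $21$ over \emph{arbitrary} quartic fields, not only Galois cyclic ones.
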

\begin{proof} 
Notice that since full 3-torsion and full 13-torsion are not defined over $K$, Corollary \ref{nopoints} shows that it is impossible for $E(K)$ to contain a point of order 39 or 91.  In this proof we will make repeated use Theorem \ref{isogoverQ}.

\underline{$n=21$}\\
If $E(K)$ has a point of order 21, then $E(K)[21] \cong \mathbb{Z}/21\mathbb{Z}$.  Thus, by Lemma \ref{isogeny} $E$ has a $21$-isogeny over $\Q$.  There are only finitely many isomorphism classes of elliptic curves with a $21$-isogeny over $\mathbb{Q}$.  Using division polynomials, we see that none of these curves have an $x$-coordinate of a $21$-torsion point defined in a quadratic or quartic number field.  Thus, it is impossible for $E$ to have $21$-torsion over a quartic number field.

\underline{$n=26$}\\
If $E(K)$ has a point of order 26, then $E(K)[26]$ is isomorphic to either $\mathbb{Z}/26\mathbb{Z}$ or $\mathbb{Z}/2\mathbb{Z} \oplus \mathbb{Z}/26\mathbb{Z}$.  The first case implies the existence of a 26-isogeny over $\mathbb{Q}$, which is impossible.  If $E(K)[26] \cong \mathbb{Z}/2\mathbb{Z} \oplus \mathbb{Z}/26\mathbb{Z}$, then by Lemma \ref{2torsionoverQ} the curve $E$ has a $2$-isogeny over $\mathbb{Q}$.  Further, $E$ has a $13$-isogeny over $\mathbb{Q}$.  This implies there is an elliptic curve over $\mathbb{Q}$ with a $26$-isogeny over $\mathbb{Q}$, which again is impossible.

\underline{$n=35$}\\
If $E(K)$ has a point of order 35, then $E(K)[35]$ is isomorphic to either $\mathbb{Z}/35\mathbb{Z}$ or $\mathbb{Z}/5\mathbb{Z} \oplus \mathbb{Z}/35\mathbb{Z}$.  The first case implies the existence of a 35-isogeny over $\mathbb{Q}$, which is impossible, and the second case is ruled out by \ref{Najman5}.

\underline{$n=40$}\\
If $E(K)$ has a point of order 40, then $E(K)[40]$ is isomorphic to either $\mathbb{Z}/40\mathbb{Z}$ or $\mathbb{Z}/2\mathbb{Z} \oplus \mathbb{Z}/40\mathbb{Z}$.  In either case, this implies a 20-isogeny over $\mathbb{Q}$, which is impossible.

\underline{$n=45$}\\
If $E(K)$ has a point of order 45, then $E(K)[45]$ is isomorphic to either $\mathbb{Z}/45\mathbb{Z}$ or $\mathbb{Z}/5\mathbb{Z} \oplus \mathbb{Z}/45\mathbb{Z}$.  The first case implies the existence of a 45-isogeny over $\mathbb{Q}$, which is impossible, and the second case is ruled out by \ref{Najman5}.

\underline{$n=48$}\\
If $E(K)$ has a point of order 48, then $E(K)[48]$ is isomorphic to either $\mathbb{Z}/48\mathbb{Z}$ or $\mathbb{Z}/2\mathbb{Z} \oplus \mathbb{Z}/48\mathbb{Z}$.  In either case, this implies a 24-isogeny over $\mathbb{Q}$, which is impossible.

\underline{$n=63$}\\
If $E(K)$ has a point of order 63, then $E(K)[63]$ is isomorphic to $\mathbb{Z}/63\mathbb{Z}$, but this implies the existence of a 63-isogeny over $\mathbb{Q}$, which is impossible.

\underline{$n=65$}\\
If $E(K)$ has a point of order 65, then $E(K)[65]$ is isomorphic to either $\mathbb{Z}/65\mathbb{Z}$ or $\mathbb{Z}/5\mathbb{Z} \oplus \mathbb{Z}/65\mathbb{Z}$.  The first case implies the existence of a 65-isogeny over $\mathbb{Q}$, which is impossible, and the second case is ruled out by \ref{Najman5}.

\end{proof}

We separate the next two lemmas from the above because their proofs are more involved.

\begin{lemma}\label{no20}
Let $K$ be a cyclic quartic 4 number field, and let $E$ an elliptic curve over $\mathbb{Q}$.  Then $E(K)$ has no points of order 20. 
\end{lemma}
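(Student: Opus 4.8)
The plan is to assume $E(K)$ contains a point of order $20$ and force a rational cyclic $20$-isogeny, which is impossible by Theorem \ref{isogoverQ} (since $20\notin\{1,\dots,19,21,25,27,37,43,67,163\}$). First I would dispose of $K=\mathbb{Q}(\zeta_5)$: by Theorem \ref{Najman5} every $E(\mathbb{Q}(\zeta_5))_{tors}$ lies in Mazur's list or is one of $\mathbb{Z}/15\mathbb{Z},\mathbb{Z}/16\mathbb{Z},\mathbb{Z}/5\mathbb{Z}\oplus\mathbb{Z}/5\mathbb{Z}$, none of which has an element of order $20$. So assume $K\neq\mathbb{Q}(\zeta_5)$. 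By Lemma \ref{fullntorsion} full $5$-torsion would force $K=\mathbb{Q}(\zeta_5)$, hence $E(K)[5]\cong\mathbb{Z}/5\mathbb{Z}$; as $K/\mathbb{Q}$ is Galois, Lemma \ref{isogeny} gives a rational $5$-isogeny. It then suffices to produce a rational cyclic $4$-isogeny: its order-$4$ kernel together with the rational order-$5$ kernel are Galois-stable of coprime order, so their sum is a Galois-stable cyclic subgroup of order $20$, yielding the forbidden $20$-isogeny.

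By Proposition \ref{maximalEKtors} we may write $E(K)[2^\infty]\cong\mathbb{Z}/2\mathbb{Z}\oplus\mathbb{Z}/2^m\mathbb{Z}$ with $m\ge 2$. If $m\ge 3$, then $2\,E(K)[2^\infty]\cong\mathbb{Z}/2^{m-1}\mathbb{Z}$ is Galois-stable (since $E(K)[2^\infty]$ is, $K/\mathbb{Q}$ being Galois) and cyclic of order $\ge 4$, so its characteristic order-$4$ subgroup is a rational cyclic $4$-isogeny kernel; likewise, if $E(K)[4]$ is cyclic, Lemma \ref{isogeny} gives the $4$-isogeny directly. In both cases the $4$- and $5$-isogenies combine to a rational $20$-isogeny, a contradiction. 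Thus the only surviving configuration is
\[
E(K)[2^\infty]\cong\mathbb{Z}/2\mathbb{Z}\oplus\mathbb{Z}/4\mathbb{Z},\qquad E(K)[5]\cong\mathbb{Z}/5\mathbb{Z}.
\]
Here $E[2]\subseteq E(K)$; since $\mathbb{Q}(E[2])$ is a subfield of the cyclic quartic $K$ with $\Gal(\mathbb{Q}(E[2])/\mathbb{Q})\hookrightarrow S_3$, its degree divides $\gcd(4,6)=2$, so $E[2]\subseteq E(F)$ for the (totally real, by Lemma \ref{quadtotallyreal}) quadratic subfield $F$. Moreover $2\,E(K)[2^\infty]$ is a Galois-stable order-$2$ group, so $E(\mathbb{Q})[2]\neq 0$.

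For this last configuration I would pass to $F$ and study the $\mathbb{Z}/4\mathbb{Z}$-generator $B$, whose double $2B$ is the rational $2$-torsion point $(\gamma,0)$, $\gamma\in\mathbb{Q}$, the remaining $2$-torsion $x$-coordinates $\alpha,\bar\alpha\in F$ being conjugate. Writing $E:y^2=(x-\alpha)(x-\bar\alpha)(x-\gamma)$, Knapp's criterion (Lemma \ref{halvingpoint}) says $(\gamma,0)$ is halvable over $K$ precisely when $\gamma-\alpha$ and $\gamma-\bar\alpha$ are squares in $K$; since the order-$4$ point is not defined over $F$, they are not both squares in $F$, forcing $K=F(\sqrt{\gamma-\alpha})$. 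I would then impose Lemma \ref{criteriaforcyclic}: with $F=\mathbb{Q}(\sqrt m)$, cyclicity of $K=F(\sqrt\delta)$ requires $N_{F/\mathbb{Q}}(\delta)\in m(\mathbb{Q}^\times)^2$ with $\delta\equiv\gamma-\alpha$, a sharp constraint on the admissible square class. Combining this norm condition with the halving relations and the rational $5$-isogeny—via the Gonz\'{a}lez--Jim\'{e}nez--Tornero table (Theorem \ref{phi2g}), which pins $E(\mathbb{Q})_{tors}=\mathbb{Z}/2\mathbb{Z}$ and $E(F)_{tors}=\mathbb{Z}/2\mathbb{Z}\oplus\mathbb{Z}/10\mathbb{Z}$ when the $5$-torsion descends to $F$, together with Lemma \ref{twist} when it lies in the quadratic twist—should eliminate this case.

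The main obstacle is precisely this configuration $E(K)[2^\infty]\cong\mathbb{Z}/2\mathbb{Z}\oplus\mathbb{Z}/4\mathbb{Z}$: neither cyclic order-$4$ subgroup need be $\Gal(\overline{\mathbb{Q}}/\mathbb{Q})$-stable (Galois can interchange them), so no rational $4$-isogeny is guaranteed and the direct route to a $20$-isogeny is blocked. Resolving it depends on the finer $2$-primary bookkeeping: the interaction of Knapp's halving conditions with the cyclicity norm condition of Lemma \ref{criteriaforcyclic}, and, should these not already suffice, Kenku's bound (Theorem \ref{8qisogs}) on the rational isogenies in the isogeny class of $E$ or the explicit $2$-adic image classification (Theorem \ref{2adicindex}), to rule out an $E/\mathbb{Q}$ that simultaneously carries a rational $5$-isogeny and this cyclic-quartic $2$-power growth. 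I expect essentially all the difficulty to lie in this $2$-adic analysis rather than in the $5$-part.
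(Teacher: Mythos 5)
Your reduction is sound and in places a bit slicker than the paper's: ruling out the cyclic $\mathbb{Z}/20\mathbb{Z}$ case via a rational $20$-isogeny, disposing of $K=\mathbb{Q}(\zeta_5)$ by Theorem \ref{Najman5}, and noting that any Galois-stable cyclic $4$-subgroup would combine with the $5$-isogeny kernel into a forbidden $20$-isogeny all lead to the same surviving configuration the paper isolates, namely $E(K)[2^{\infty}]\cong\mathbb{Z}/2\mathbb{Z}\oplus\mathbb{Z}/4\mathbb{Z}$ with $E(F)[2^{\infty}]\cong\mathbb{Z}/2\mathbb{Z}\oplus\mathbb{Z}/2\mathbb{Z}$ and $E(\mathbb{Q})[2]\cong\mathbb{Z}/2\mathbb{Z}$ (the paper gets there via Kenku's Theorem \ref{8qisogs} and the Gonz\'{a}lez-Jim\'{e}nez--Tornero table). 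Your observation that $2E(K)[2^{\infty}]$ is Galois-stable, so the point being halved is the rational $2$-torsion point, is a clean shortcut past the paper's explicit case analysis of which $2$-torsion point gets halved (the paper instead checks directly that halving a non-rational $2$-torsion point yields $K=F(\sqrt{\pm\sqrt{b^2-4c}})$, which is never cyclic quartic).

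The genuine gap is that you stop exactly where the proof actually happens. In the surviving configuration you assert that combining Knapp's halving conditions with the cyclicity norm condition and the rational $5$-isogeny ``should eliminate this case,'' with Kenku or Rouse--Zureick-Brown held in reserve, but no elimination is carried out and it is not clear from your sketch that one exists. The paper's resolution is an explicit computation: writing $E:y^2=x(x^2+bx+c)$ with $m=b^2-4c$, Knapp's criterion gives $K=F\bigl(\sqrt{2b+2\sqrt{m}},\,\sqrt{2b-2\sqrt{m}}\bigr)$, and Lemma \ref{criteriaforcyclic} forces $(2b)^2/m-4=1$, i.e.\ $(2b)^2=5m$, which pins down $b=10s$, $c=5s^2$ --- a one-parameter family with constant $j$-invariant $78608$. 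Since $\Phi_5(X,78608)$ has no rational root, no curve in this family has a rational $5$-isogeny, contradicting $E(K)[5]\cong\mathbb{Z}/5\mathbb{Z}$. Without this computation (or an equivalent argument) the hard case is not closed, so the proposal as written is a correct plan but not a proof.
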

\begin{proof}
Notice that, as a consequence of Theorem \ref{isogeny} and Lemma \ref{isogoverQ}, it is impossible for $E(K)[20] \cong \mathbb{Z}/20\mathbb{Z}$.  Thus, if $E(K)$ has a point of order 20, then by Proposition \ref{maximalEKtors}, $E(K)_{\text{tors}} \cong \mathbb{Z}/2\mathbb{Z} \oplus \mathbb{Z}/20\mathbb{Z}$.

Let $\mathbb{Q} \subset F \subset K$ denote the chain of number fields in $K$, where $F$ is the unique intermediate quadratic number field.  To prove that $\mathbb{Z}/2\mathbb{Z} \oplus \mathbb{Z}/20\mathbb{Z}$ torsion is impossible, we will break up the argument into different cases based on $E(F)_{\text{tors}}$ and subsequently $E(\mathbb{Q})_{\text{tors}}$.

Suppose $E(K)_{\text{tors}} \cong \mathbb{Z}/2\mathbb{Z} \oplus \mathbb{Z}/20\mathbb{Z}$.  Notice, since $E(K)[5] \cong \mathbb{Z}/5\mathbb{Z}$ we have by Lemma \ref{isogeny} that $E$ has a 5-isogeny over $\mathbb{Q}$.  Kenku's classification of $\mathbb{Q}$-isomorphism classes of elliptic curves (Theorem \ref{8qisogs}) states that if $E$ has a 5-isogeny over $\mathbb{Q}$, then $E$ has at most one $\mathbb{Q}$-rational subgroup of order 2.  Therefore, $E(\mathbb{Q})[2] \subseteq \mathbb{Z}/2\mathbb{Z}$.  Further, since $E(K)[2] \neq \{ \mathcal{O} \}$,   Lemma \ref{2torsionoverQ} gives that in fact $E(\mathbb{Q})[2] \cong \mathbb{Z}/2\mathbb{Z}$.  Thus given that $E(K)_{\text{tors}} \cong \mathbb{Z}/2\mathbb{Z} \oplus \mathbb{Z}/20\mathbb{Z}$ we have that $E(\mathbb{Q})_{\text{tors}} \cong \mathbb{Z}/2\mathbb{Z}$ or $\mathbb{Z}/10\mathbb{Z}$.

Theorem \ref{phi2g} classifies precisely how certain torsion over $\mathbb{Q}$ can grow in a quadratic extension of $\mathbb{Q}$.  So given that $E(K)_{\text{tors}} \cong \mathbb{Z}/2\mathbb{Z} \oplus \mathbb{Z}/20\mathbb{Z}$, we have the following five possibilities for $E(F)_{\text{tors}}$:
$$\mathbb{Z}/2\mathbb{Z},\ \  \mathbb{Z}/4\mathbb{Z},\ \  \mathbb{Z}/10\mathbb{Z},\ \  \mathbb{Z}/2\mathbb{Z} \oplus \mathbb{Z}/2\mathbb{Z},\ \  \mathbb{Z}/2\mathbb{Z} \oplus \mathbb{Z}/10\mathbb{Z}.$$
Notice that if full 2-torsion is not defined over $F$, then the field extension adjoining the second 2-torsion point is $K=F(\sqrt{\triangle_{E}})$, which is a biquadratic extension, not a cyclic quartic extension.  Therefore we need only consider when $F = \mathbb{Q}(\sqrt{\triangle_{E}})$ and
$$E(F)_{\text{tors}} \cong \mathbb{Z}/2\mathbb{Z} \oplus \mathbb{Z}/2\mathbb{Z} \mbox{ or } \mathbb{Z}/2\mathbb{Z} \oplus \mathbb{Z}/10\mathbb{Z}.$$
In either case, $E(F)[2^{\infty}] \cong \mathbb{Z}/2\mathbb{Z} \oplus \mathbb{Z}/2\mathbb{Z}$, and so $E$ gains a point of order 4 in $K$.  Since $E(\mathbb{Q})[2] \cong \mathbb{Z}/2\mathbb{Z}$, there exists a model for $E$ of the form
$$E: y^2 = x(x^2+bx+c).$$
Suppose first that the point of order 4 is $Q$, and $2Q = P = (0,0)$.  By Lemma \ref{halvingpoint}, we have that the following are squares in $K$:
$$0, \frac{b+\sqrt{b^2-4c}}{2}, \frac{b-\sqrt{b^2-4c}}{2}.$$
That is, $K = F(\sqrt{\frac{b+\sqrt{b^2-4c}}{2}}, \sqrt{\frac{b-\sqrt{b^2-4c}}{2}}) = F(\sqrt{2b+2\sqrt{m}},\sqrt{2b-2\sqrt{m}})$ where $m = b^2-4c \neq 0$ since $\triangle_{E} \neq 0$.  By Lemma \ref{criteriaforcyclic} we must have $$\frac{(2b)^2}{m} - 2^2 = 1$$ and so $(2b)^2 = 5m$. Thus $m = 2^2 \cdot 5 \cdot t^2$ for some $t \in \mathbb{Q}$ and so $b=5t$.  Further, $m=2^2 \cdot 5 \cdot t^2$ and so $25t^2 - 4c = 20t^2$.  Thus, $4c = 5t^2$.  Substituting $t = 2s$ gives $b = 10s$ and $c=5s^2$.  That is, we obtain a 1-parameter family of elliptic curves such that the 2-power-torsion grows:  
$$\mathbb{Z}/2\mathbb{Z} \rightarrow \mathbb{Z}/2\mathbb{Z} \oplus \mathbb{Z}/2\mathbb{Z} \rightarrow \mathbb{Z}/2\mathbb{Z} \oplus \mathbb{Z}/4\mathbb{Z}$$
 over $\mathbb{Q} \rightarrow F \rightarrow K$.  However, all the curves in this 1-parameter family have the same $j$-invariant, namely $j(E) = 78608$, and thus are all isomorphic over $\mathbb{Q}$.  Now, if one of them acquired $\mathbb{Z}/2\mathbb{Z} \oplus \mathbb{Z}/20\mathbb{Z}$ over a cyclic quartic, then, in particular, there would be a 5-isogeny over $\Q$ for said curve.  But 5-isogenies is an isomorphism invariant property, and so the entire family of curves would posses a 5-isogeny over $\Q$. However, we can check that $\Phi_{5}(X, 78608)$ has no rational roots, where $\Phi_{5}(X,Y)$ is the $5^{\text{th}}$ classical modular polynomial, so this family does not have a 5-isogeny over $\Q$.

Now, suppose one of the other 2-torsion points is halved.  Then, $2Q= P = ( \frac{-b\pm\sqrt{b^2 - 4c}}{2},0)$.  By Lemma \ref{halvingpoint} we have that the following are squares in $K$:
$$ \frac{-b\pm\sqrt{b^2 - 4c}}{2}, \pm \sqrt{b^2-4c}.$$
However, it is easy to see by Lemma \ref{criteriaforcyclic} that the field $K = F( \sqrt{\pm \sqrt{b^2-4c}})$ is not a cyclic quartic number field.
Therefore, it is impossible for $E$ to have torsion subgroup isomorphic to $\mathbb{Z}/2\mathbb{Z} \oplus \mathbb{Z}/20\mathbb{Z}$ over a cyclic quartic number field.
\end{proof}

\begin{lemma}
Let $K$ be a Galois cyclic degree 4 number field, $E$ an elliptic curve over $\mathbb{Q}$.  Then $E(K)$ has no points of order 24. 
\end{lemma}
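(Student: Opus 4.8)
The plan is to run the argument in parallel with Lemma~\ref{no20}. By Proposition~\ref{maximalEKtors} a point of order $24$ forces $E(K)[24]$ to be isomorphic to either $\mathbb{Z}/24\mathbb{Z}$ or $\mathbb{Z}/2\mathbb{Z}\oplus\mathbb{Z}/24\mathbb{Z}$, since the $2$-part is at most $\mathbb{Z}/2\mathbb{Z}\oplus\mathbb{Z}/16\mathbb{Z}$ and the $3$-part is at most $\mathbb{Z}/9\mathbb{Z}$. If $E$ has complex multiplication, the list of \cite{cmtorsion} quoted inside the proof of Proposition~\ref{maximalEKtors} contains neither group, so I may assume $E$ has no CM. The cyclic case $\mathbb{Z}/24\mathbb{Z}$ is immediate: since $K$ is Galois, Lemma~\ref{isogeny} produces a rational $24$-isogeny, contradicting Theorem~\ref{isogoverQ}. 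The whole content is therefore to exclude $E(K)_{\mathrm{tors}}\cong\mathbb{Z}/2\mathbb{Z}\oplus\mathbb{Z}/24\mathbb{Z}$.

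Assume this holds. First I record two $\mathbb{Q}$-rational isogenies. Because $E(K)[3]\cong\mathbb{Z}/3\mathbb{Z}$ and $K/\mathbb{Q}$ is Galois, Lemma~\ref{isogeny} gives a rational $3$-isogeny. Writing $E(K)[2^\infty]=\langle R\rangle\oplus\langle Q\rangle\cong\mathbb{Z}/2\mathbb{Z}\oplus\mathbb{Z}/8\mathbb{Z}$ with $Q$ of order $8$, every order-$8$ point has the form $\varepsilon R+kQ$ with $k$ odd, so $(2Q)^\sigma=2kQ\in\langle 2Q\rangle$ for all $\sigma\in\Gal(K/\mathbb{Q})$; thus $\langle 2Q\rangle$ is $\mathbb{Q}$-rational and $E$ has a rational cyclic $4$-isogeny. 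There are exactly two cyclic subgroups of order $8$ in $\langle R\rangle\oplus\langle Q\rangle$, namely $\langle Q\rangle$ and $\langle R+Q\rangle$, so the $\Gal(K/\mathbb{Q})$-orbit of $\langle Q\rangle$ has size $1$ or $2$. If it is a singleton then $\langle Q\rangle$ is $\mathbb{Q}$-rational, giving a rational $8$-isogeny which together with the $3$-isogeny yields a rational $24$-isogeny, again impossible by Theorem~\ref{isogoverQ}. Hence the orbit has size $2$ and $\langle Q\rangle$ is defined only over the intermediate quadratic field $F$.

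Next I locate the $2$-torsion. By Lemma~\ref{2torsionoverQ} we have $E(\mathbb{Q})[2]\ne\{\mathcal{O}\}$. If $E(\mathbb{Q})[2]\cong\mathbb{Z}/2\mathbb{Z}\oplus\mathbb{Z}/2\mathbb{Z}$, then together with the rational cyclic $4$-isogeny $\langle 2Q\rangle$ the $2$-power isogeny graph of $E$ contains $E$, its three $2$-isogenous neighbours, and the further quotient $E/\langle 2Q\rangle$ — at least five distinct curves (using that $E$ is non-CM, so the graph is locally a tree); combined with the $3$-isogeny this forces more than eight curves in the $\mathbb{Q}$-isogeny class, contradicting Kenku's Theorem~\ref{8qisogs}. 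Therefore $E(\mathbb{Q})[2]\cong\mathbb{Z}/2\mathbb{Z}$, and the full $2$-torsion field, being a quadratic subfield of $K$, equals $F$ (which is totally real by Lemma~\ref{quadtotallyreal}). Since $\langle 2Q\rangle$ is $\mathbb{Q}$-rational, $2Q$ is defined over a quadratic field inside $K$, i.e. $2Q\in E(F)$, so $E(F)[2^\infty]\supseteq\mathbb{Z}/2\mathbb{Z}\oplus\mathbb{Z}/4\mathbb{Z}$; feeding this into Theorem~\ref{phi2g} (and using that no point of order $5$ may appear) restricts $E(\mathbb{Q})_{\mathrm{tors}}$ to $\mathbb{Z}/4\mathbb{Z}$, $\mathbb{Z}/8\mathbb{Z}$, or $\mathbb{Z}/12\mathbb{Z}$.

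Finally I reach the heart of the matter by the halving method of Lemma~\ref{no20}. Take a model $y^2=x(x^2+bx+c)$ with $(0,0)$ the rational $2$-torsion point $=4Q$, and set $m=b^2-4c$, so $F=\mathbb{Q}(\sqrt{m})$. Because the order-$4$ point $2Q\in E(F)$ halves $(0,0)$, Lemma~\ref{halvingpoint} forces $-e_2,-e_3\in (F^\times)^2$, so the other two $2$-torsion points are $(-u^2,0),(-v^2,0)$ with $u,v\in F$. Halving $2Q$ to obtain $Q$ over $K$ requires, again by Lemma~\ref{halvingpoint}, that $x(2Q)$, $x(2Q)-e_2$, $x(2Q)-e_3$ all be squares in the quadratic extension $K/F$; tracking these three elements in $F^\times/(F^\times)^2$ (they are $uv,\ u(u+v),\ v(u+v)$ up to sign, whose product is a square) pins down the class $w$ with $K=F(\sqrt{w})$. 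Imposing the cyclic criterion of Lemma~\ref{criteriaforcyclic} on $K=F(\sqrt{w})$ then either makes $K$ biquadratic — an immediate contradiction — or forces $b,c$ into a one-parameter family of fixed $j$-invariant, and as in Lemma~\ref{no20} I conclude by checking that the modular polynomial $\Phi_3(X,j)$ has no rational root for this $j$, so no curve in the family has a rational $3$-isogeny, contradicting the $3$-isogeny of the second paragraph. The main obstacle is exactly this last step: unlike Lemma~\ref{no20}, one must descend through a point of order $8$, so the halving is iterated and the bookkeeping of square classes in $F^\times/(F^\times)^2$ needed to decide cyclic versus biquadratic at each stage is considerably more delicate.
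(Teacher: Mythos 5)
Your overall strategy matches the paper's (reduce to $\mathbb{Z}/2\mathbb{Z}\oplus\mathbb{Z}/24\mathbb{Z}$, extract a rational $3$-isogeny via Lemma~\ref{isogeny}, constrain $E(\mathbb{Q})[2^{\infty}]$ and $E(F)[2^{\infty}]$ via Kenku's bound and Theorem~\ref{phi2g}, then apply the halving criterion of Lemma~\ref{halvingpoint} together with the cyclicity criterion of Lemma~\ref{criteriaforcyclic}), and your observation that $\langle 2Q\rangle$ is automatically $\mathbb{Q}$-rational --- hence $2Q\in E(F)$ and $E(F)[2^{\infty}]\supseteq\mathbb{Z}/2\mathbb{Z}\oplus\mathbb{Z}/4\mathbb{Z}$ --- is a genuine streamlining: it makes the paper's Case 1 ($E(F)[2^{\infty}]\cong\mathbb{Z}/2\mathbb{Z}\oplus\mathbb{Z}/2\mathbb{Z}$, which the paper disposes of via the $j=78608$ family and $\Phi_3$) vacuous.

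However, there is a genuine gap: you never treat the case $E(F)[2^{\infty}]\cong\mathbb{Z}/2\mathbb{Z}\oplus\mathbb{Z}/8\mathbb{Z}$. With $E(\mathbb{Q})[2^{\infty}]\cong\mathbb{Z}/4\mathbb{Z}$, Theorem~\ref{phi2g} allows $E(F)[2^{\infty}]$ to be either $\mathbb{Z}/2\mathbb{Z}\oplus\mathbb{Z}/4\mathbb{Z}$ or $\mathbb{Z}/2\mathbb{Z}\oplus\mathbb{Z}/8\mathbb{Z}$, and nothing you have established excludes the latter: your orbit argument only shows $\langle Q\rangle$ is not $\Gal(\overline{\mathbb{Q}}/\mathbb{Q})$-stable, which is perfectly compatible with the point $Q$ of order $8$ lying in $E(F)$. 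In that situation no $2$-power halving occurs in the quadratic step $K/F$, so your entire final paragraph (which hinges on ``halving $2Q$ to obtain $Q$ over $K$'') says nothing, and the cyclicity criterion gives no contradiction. The paper handles exactly this case by searching the Rouse--Zureick-Brown database (Theorem~\ref{2adicindex}) for all non-CM $2$-adic images with $E(\mathbb{Q})[2^{\infty}]\cong\mathbb{Z}/4\mathbb{Z}$ and $E(\mathbb{Q}(\sqrt{\triangle_E}))[2^{\infty}]\cong\mathbb{Z}/2\mathbb{Z}\oplus\mathbb{Z}/8\mathbb{Z}$, and verifying that every such family carries a rational $8$-isogeny, which with the $3$-isogeny produces an impossible $24$-isogeny by Theorem~\ref{isogoverQ}; some input of this kind is needed and cannot be extracted from the halving computation. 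Separately, your remaining case ($E(F)[2^{\infty}]\cong\mathbb{Z}/2\mathbb{Z}\oplus\mathbb{Z}/4\mathbb{Z}$) is only sketched --- the paper closes it with an explicit parametrization showing the would-be $x$-coordinate $d=\frac{\pm 5t\pm 2t\sqrt{10}}{3}$ is irrational, with no modular polynomial needed --- but that part of your plan is plausibly completable; the missing $\mathbb{Z}/2\mathbb{Z}\oplus\mathbb{Z}/8\mathbb{Z}$ case is the real defect.
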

\begin{proof}
Notice that by a simple consequence of Theorem \ref{isogeny} and Lemma \ref{isogoverQ} it is impossible for $E(K)[24] \cong \mathbb{Z}/24\mathbb{Z}$.  Thus, if $E(K)$ has a point of order 24, then by Proposition \ref{maximalEKtors}, $E(K)_{\text{tors}} \cong \mathbb{Z}/2\mathbb{Z} \oplus \mathbb{Z}/24\mathbb{Z}$.

Let $\mathbb{Q} \subset F \subset K$ denote the chain of number fields in $K$, where $F$ is the unique intermediate quadratic number field.  To prove that $\mathbb{Z}/2\mathbb{Z} \oplus \mathbb{Z}/24\mathbb{Z}$ torsion is impossible, we will break up the argument into different cases based on $E(F)_{\text{tors}}$ and subsequently $E(\mathbb{Q})_{\text{tors}}$.

Suppose $E(K)_{\text{tors}} \cong \mathbb{Z}/2\mathbb{Z} \oplus \mathbb{Z}/24\mathbb{Z}$.  Notice, since $E(K)[3] \cong \mathbb{Z}/3\mathbb{Z}$ we have by Lemma \ref{isogeny} that $E$ has a 3-isogeny over $\mathbb{Q}$.  Kenku's classification of $\mathbb{Q}$-isomorphism classes of elliptic curves (Theorem \ref{8qisogs}) states that if $E$ has a 3-isogeny over $\mathbb{Q}$, then $E$ has at most four $\mathbb{Q}$-rational subgroups of order a power of 2 (including $\{\mathcal{O}\}$).

Further, given that $E(K)_{\text{tors}} \cong \mathbb{Z}/2\mathbb{Z} \oplus \mathbb{Z}/24\mathbb{Z}$ it follows that
$$E(\mathbb{Q})[2^{\infty}] \cong 
\begin{cases}
\{ \mathcal{O} \}, \\
\mathbb{Z}/2\mathbb{Z}, \\
\mathbb{Z}/4\mathbb{Z}, \\
\mathbb{Z}/8\mathbb{Z}, \\
\mathbb{Z}/2\mathbb{Z} \oplus \mathbb{Z}/2\mathbb{Z}, \\
\mathbb{Z}/2\mathbb{Z} \oplus \mathbb{Z}/4\mathbb{Z}, \\
\mathbb{Z}/2\mathbb{Z} \oplus \mathbb{Z}/8\mathbb{Z}.
\end{cases}$$
We can rule out  $\mathbb{Z}/2\mathbb{Z} \oplus \mathbb{Z}/4\mathbb{Z}$ because it has six subgroups of order a power of 2 and $\mathbb{Z}/2\mathbb{Z} \oplus \mathbb{Z}/8\mathbb{Z}$ because it has eight.  We can further rule of $E(\mathbb{Q})[2^{\infty}] \cong \mathbb{Z}/8\mathbb{Z}$ since then $E$ would have an 8-isogeny over $\mathbb{Q}$, and thus a 24-isogeny over $\mathbb{Q}$, which is impossible.  We can rule out $\mathbb{Z}/2\mathbb{Z} \oplus \mathbb{Z}/2\mathbb{Z}$ because this yields four cyclic-subgroups defined over $\mathbb{Q}$ of 2-power order (three of order 2 and the trivial subgroup), but since we know $E(K)[2^{\infty}] \cong \mathbb{Z}/2\mathbb{Z} \oplus \mathbb{Z}/8\mathbb{Z}$, it follows that $E$ has a 4-isogeny over $\mathbb{Q}$, and therefore at least one more cyclic-subgroup defined over $\mathbb{Q}$.  Finally, Lemma \ref{2torsionoverQ} rules out $E(\mathbb{Q})[2^{\infty}] \cong \{\mathcal{O}\}$.  Thus, 

$$E(\mathbb{Q})[2^{\infty}] \cong 
\begin{cases}
\mathbb{Z}/2\mathbb{Z}, \\
\mathbb{Z}/4\mathbb{Z}.
\end{cases}$$

Theorem \ref{phi2g} classifies precisely how certain torsion over $\mathbb{Q}$ can grow in a quadratic extension of $\mathbb{Q}$.  This gives the following possibilities for $E(F)[2^{\infty}]$:
$$ \mathbb{Z}/2\mathbb{Z},\ \ \mathbb{Z}/4\mathbb{Z},\ \ \mathbb{Z}/8\mathbb{Z},\ \ \mathbb{Z}/2\mathbb{Z} \oplus \mathbb{Z}/2\mathbb{Z},\ \ \mathbb{Z}/2\mathbb{Z} \oplus \mathbb{Z}/4\mathbb{Z},\ \ \mathbb{Z}/2\mathbb{Z} \oplus \mathbb{Z}/8\mathbb{Z}.$$
Again, notice that if full 2-torsion is not defined over $F$, then $K= F(\sqrt{\triangle_{E}})$, and so $K$ is a biquadratic number field, not a cyclic quartic number field.  Thus we need only consider
$$E(F)[2^{\infty}] \cong \begin{cases} \mathbb{Z}/2\mathbb{Z} \oplus \mathbb{Z}/2\mathbb{Z}, \\ \mathbb{Z}/2\mathbb{Z} \oplus \mathbb{Z}/4\mathbb{Z}, \\ \mathbb{Z}/2\mathbb{Z} \oplus \mathbb{Z}/8\mathbb{Z} \end{cases}.$$

\textbf{Case 1:} Suppose $E(F)[2^{\infty}] \cong \mathbb{Z}/2\mathbb{Z} \oplus \mathbb{Z}/2\mathbb{Z}$.  Then it must follow that $E(\mathbb{Q})[2^{\infty}] \cong \mathbb{Z}/2\mathbb{Z}$ , and so we have a model for E of the form $$y^2 = x(x^2+bx+c).$$  Following an identical argument to that of Lemma \ref{no20}, it must be that the point that is halved in $K$ is (0,0).  We again find a 1-parameter family of curves with torsion $E(\mathbb{Q})[2^{\infty}] \cong \mathbb{Z}/2\mathbb{Z}$ and $E(F)[2^{\infty}] \cong \mathbb{Z}/2\mathbb{Z} \oplus \mathbb{Z}/2\mathbb{Z}$ that gains a point of order 4 in a cyclic quartic extension $K$ all with $j$-invariant 78608.  We check that $\Phi_3(X, 78608)$ has no rational roots, where $\Phi_3(X,Y)$ is the $3^{\text{rd}}$ classical modular polynomial, and therefore these curves have no $3$-isogeny over $\Q$.\\




\textbf{Case 2:} Suppose $E(F)[2^{\infty}] \cong \mathbb{Z}/2\mathbb{Z} \oplus \mathbb{Z}/4\mathbb{Z}$.  From Theorem \ref{phi2g} we have $E(\mathbb{Q})[2^{\infty}] \cong \mathbb{Z}/4\mathbb{Z}$.


Since $E(\mathbb{Q})[2^{\infty}] \cong \mathbb{Z}/4\mathbb{Z}$, we have a model for $E$ of the form: $$y^2 = x(x^2+bx+c)$$ and $F = \mathbb{Q}(\sqrt{m})$, where $m=b^2-4c$, and $E(\mathbb{Q})[4] = \{ \mathcal{O}, (\sqrt{c}, y'), (0,0), (-\sqrt{c},y'') \}$.  Let $P = (d, y_0) \in E(\mathbb{Q})$ be the point of order 4 that is halved in $K$ ($d = \sqrt{c}$ or $-\sqrt{c}$).  Then by lemma \ref{halvingpoint} it follows that $K = F(\sqrt{d}, \sqrt{2d+2b-2\sqrt{m}}, \sqrt{2d+2b+2\sqrt{m}})$.  Now using lemma \ref{criteriaforcyclic}, $K$ is quartic cyclic if and only if
$$\frac{(2d+2b)^2}{m}-4 = 1$$
We see that $(2d+2b)^2 =5m$ so $4(d+b)^2 = 5m$ and thus $m = 2^2 \cdot 5 \cdot t^2$ for some $t \in \mathbb{Q}$.  Thus $b^2-4c = 20t^2$.  Further, $d+b = 5t$, and so $b = 5t-d = 5t \pm \sqrt{c}$, and so $(5t\pm\sqrt{c})^2 - 4c = 20t^2$ simplifying to $$3c\pm10t\sqrt{c}-5t^2 = 0$$ or $$3d^2 \pm 10td - 5t^2 = 0.$$  Solving for $d$ yields:
$$d = \frac{\pm 5t \pm 2t\sqrt{10}}{3}.$$
Further, $d \neq 0$, since otherwise $t=0$, implying $\triangle_{E} = 0$. So, this contradicts that $d \in \mathbb{Q}$.  Therefore, it is impossible for $E(F)[2^{\infty}] \cong \mathbb{Z}/2\mathbb{Z} \oplus \mathbb{Z}/4\mathbb{Z}$.\\

\textbf{Case 3:} Suppose $E(F)[2^{\infty}] \cong \mathbb{Z}/2\mathbb{Z} \oplus \mathbb{Z}/8\mathbb{Z}$.  From Theorem \ref{phi2g} we have $E(\mathbb{Q})[2^{\infty}] \cong \mathbb{Z}/4\mathbb{Z}$.

Notice that since full 2-torsion is defined over $F$ but not $\mathbb{Q}$ that $F = \mathbb{Q}(\triangle_{E})$ where $\triangle_{E}$ is the discriminant of $E$.  We may search the Rouse, Zureich-Brown database \cite{2adicimage} for all families of elliptic curves (without CM) whose 2-adic image satisfies $E(\mathbb{Q})[2^{\infty}] \cong \mathbb{Z}/4\mathbb{Z}$ and $E(\mathbb{Q}(\triangle_{E}))[2^{\infty}] \cong \mathbb{Z}/2\mathbb{Z} \oplus \mathbb{Z}/8\mathbb{Z}$.  These families are:
$$\text{X102k, X195h, X197f, X202e, X213k, X223b, X230h, X235f}$$
Now, taking a representative from these families and computing the isogenies of the curve shows that all of the curves in each of these families has an 8-isogeny over $\mathbb{Q}$.  But since we are assuming $E(K)[3] \cong \mathbb{Z}/3\mathbb{Z}$, these curves have both an 8-isogeny and a 3-isogeny over $\mathbb{Q}$, thus giving a curve with a 24-isogeny over $\mathbb{Q}$, which is impossible by Theorem \ref{isogoverQ}.

If $E/\mathbb{Q}$ does have CM, then \cite{cmtorsion} gives a list of all possible isomorphism classes for $E(K)_{\text{tors}}$ for $K$ a quartic field (note that this list is for $E/K$ not necessarily $E/\mathbb{Q}$, and $K$ is any quartic field not necessarily cyclic quartic):
$$
E(K)_{\text{tors}} \in 
\begin{cases} 
\mathbb{Z}/m\mathbb{Z} & \mbox{for } N_1=1, \ldots, 8,10,12,13,21,\\
\mathbb{Z}/2\mathbb{Z} \oplus \mathbb{Z}/2N_2\mathbb{Z} & \mbox{for } N_2=1, \ldots, 5,\\
\mathbb{Z}/3\mathbb{Z} \oplus \mathbb{Z}/3N_3\mathbb{Z} & \mbox{for } N_3=1,2,\\
\mbox{ and } & \mathbb{Z}/4\mathbb{Z} \oplus \mathbb{Z}/4\mathbb{Z},
\end{cases}
$$
and notice that none of these groups have a point of order 24.

Therefore, it is impossible for an elliptic curve defined over $\mathbb{Q}$ to have a point of order 24 in a cyclic quartic extension of $\mathbb{Q}$.

\end{proof}

Thus, the possible isomorphism classes for $E(K)_{tors}$ for an elliptic curve over $\mathbb{Q}$ and a cyclic quartic number field $K$ is a subset of 
$$
\begin{array}{ll}
\mathbb{Z} / N_1 \mathbb{Z}, & N_1 = 1, \ldots ,16 , N_1 \neq 11, 14,\\
\mathbb{Z} / 2\mathbb{Z} \oplus \mathbb{Z}/2N_2 \mathbb{Z}, & N_2 = 1, \ldots , 6, 8,\\
\mathbb{Z} / 5\mathbb{Z} \oplus \mathbb{Z}/ 5\mathbb{Z}. & \\
\end{array}
$$
and the examples given in the next section show that precisely all of these groups occur.


\section{Examples}\label{examples}

In this section we provide examples of an elliptic curve $E$ defined over $\mathbb{Q}$ with prescribed torsion over a quartic Galolis number field.

The first table gives an elliptic curve $E$ defined over $\mathbb{Q}$ and a polynomial $f$ such that $K = \mathbb{Q}(\alpha)$ is a cyclic quartic number field for $\alpha$ a root of $f$, and $E(K)_{tors} \cong G$ for each group $G$ appearing in Theorem \ref{cyclicquartic} not appearing in Mazur's list completing the proof of Theorem \ref{cyclicquartic}.

\begin{center}
\begin{tabular}{|c|c|c|}\hline
$G$ & $E$ & $f(x)$ \\ \hline  &  &  \\
$\mathbb{Z}/2\mathbb{Z} \oplus \mathbb{Z}/10 \mathbb{Z}$ & \small{[1,0,0,-828,9072]}& \small{$x^4 - 20x^2 + 10$} \\[5pt]
$\mathbb{Z}/2\mathbb{Z} \oplus \mathbb{Z}/12 \mathbb{Z}$ & \small{[1,0,0,-5557266,-3547208700]} & \small{$13x^4 - 26x^2 + 4$} \\[5pt]
$\mathbb{Z}/2\mathbb{Z} \oplus \mathbb{Z}/16 \mathbb{Z}$ & \small{[1,1,1,-5,2]}, $15a3$ & \small{$x^4-3x^3-6x^2+18x-9$} \\[5pt]
$\mathbb{Z}/13 \mathbb{Z}$ & \small{[1, 0, 1, 266982, 42637516]} & \tiny{$x^4+5688x^3-187682160510x^2+$}\\ & & \tiny{$3933601303888x-6842546623573620597$} \\[5pt] 
$\mathbb{Z}/15 \mathbb{Z}$ & \small{[1,1,1,-3,1]}, $50b1$ & $x^4-10x^2+5$ \\ [5pt] 
$\mathbb{Z}/16 \mathbb{Z}$ & \small{[1,1,1,0,0]}, $15a8$ & $x^4 + 3x^3 + 4x^2 + 2x + 1$\\  [5pt] 
$\mathbb{Z}/5 \mathbb{Z} \oplus \mathbb{Z}/5\mathbb{Z}$ & \small{[0,-1,1,-10,-20]}, $11a1$ & $x^4 + x^3 + x^2 + x + 1$\\ [5pt] \hline
\end{tabular}
\end{center}

Now we finish the proof of Theorem \ref{quarticgalois} by proving that each subgroup listed, except for $\mathbb{Z}/15\mathbb{Z}$ appears for infinitely many non-isomorphic rational elliptic curves over some quartic Galois number field.

First, notice that, by Lemma \ref{biquadaddnotorsion}, any group appearing infinitely often over a quadratic number field, as in Theorem \ref{quadcase}, must also appear infinitely often over quartic Galois number fields.

The following two theorems of Jeon, Kim, Lee give infinite families of elliptic curves defined over $\mathbb{Q}$ that have torsion subgroup $\mathbb{Z}/4\mathbb{Z} \oplus \mathbb{Z}/8\mathbb{Z}$ and $\mathbb{Z}/6\mathbb{Z} \oplus \mathbb{Z}/6\mathbb{Z}$, respectively, over a biquadratic field.

\begin{theorem}[\cite{jeonkimlee}, Theorem 4.10]
Let $K = \mathbb{Q}(\sqrt{-1},\sqrt{t^4-6t^2+1})$ with $t \in \mathbb{Q}$ and $t \neq 0, \pm 1$, and let $E$ be an elliptic curve defined by the equation
$$y^2 + xy - (\nu^2 - \frac{1}{16})y = x^3 - (\nu^2 - \frac{1}{16})x^3,$$
where $\nu = \frac{t^4 - 6t^2 + 1}{4(t^2+1)^2}$.  Then the torsion subgroup of $E$ over $K$ is equal to $\mathbb{Z}/4\mathbb{Z} \oplus \mathbb{Z}/8\mathbb{Z}$ for almost all $t$.
\end{theorem}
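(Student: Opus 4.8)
The plan is to prove the two inclusions separately: first that $\mathbb{Z}/4\mathbb{Z} \oplus \mathbb{Z}/8\mathbb{Z} \subseteq E(K)_{\text{tors}}$ by exhibiting the torsion explicitly and identifying its field of definition with $K$, and then that equality holds for all but finitely many $t$ by a specialization argument. For the containment I would first locate a rational point of order $8$: the shape of the coefficients (a single rational parameter $\nu$, itself a rational function of $t$) strongly suggests that $E$ is the Tate normal form of a curve carrying a rational $8$-torsion point, i.e.\ a member of the genus-zero family parametrizing $X_1(8)$. So the first step is to write down an explicit candidate point $P$ and verify, via the division polynomials $\psi_8$ and $\psi_4$ (or repeated application of the group law), that $8P = \mathcal{O}$ while $4P \neq \mathcal{O}$, giving $\mathbb{Z}/8\mathbb{Z} \subseteq E(\mathbb{Q})_{\text{tors}}$.

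Next, to upgrade the cyclic $\mathbb{Z}/8\mathbb{Z}$ to $\mathbb{Z}/4\mathbb{Z} \oplus \mathbb{Z}/8\mathbb{Z}$ I need the full $4$-torsion $E[4]$. By the Weil pairing, exactly as in Lemma \ref{fullntorsion}, the presence of full $4$-torsion forces $\zeta_4 = \sqrt{-1} \in K$, which accounts for one of the two generators of $K$. The remaining generator should come from the $2$-division polynomial: after moving $E$ to a model $y^2 = f(x)$ with $f$ cubic, the full $2$-torsion is defined over the splitting field of $f$, and I expect the quadratic subextension cut out by a quadratic factor of $f$ (equivalently by $\sqrt{\operatorname{disc}(f)}$) to be precisely $\mathbb{Q}(\sqrt{t^4-6t^2+1})$. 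I would then apply Knapp's halving criterion (Lemma \ref{halvingpoint}) to the $2$-torsion point not already halved by $P$, checking that the three quantities $x-\alpha$, $x-\beta$, $x-\gamma$ become squares exactly after adjoining $\sqrt{-1}$ and $\sqrt{t^4-6t^2+1}$, so that the resulting point of order $4$ is defined over $K$ and over no smaller field. This establishes $E(K)_{\text{tors}} \supseteq \mathbb{Z}/4\mathbb{Z} \oplus \mathbb{Z}/8\mathbb{Z}$.

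For the reverse inclusion I would pass to the generic fibre over $\mathbb{Q}(t)$, where the computation above shows the torsion over the function-field analogue of $K$ is exactly $\mathbb{Z}/4\mathbb{Z} \oplus \mathbb{Z}/8\mathbb{Z}$, and then invoke the specialization theorem: for all but finitely many $t \in \mathbb{Q}$ the specialization map is injective on torsion, so no torsion is lost. To see that nothing extra is gained, I would observe that any group properly containing $\mathbb{Z}/4\mathbb{Z} \oplus \mathbb{Z}/8\mathbb{Z}$ and realizable over a quartic field is severely constrained: the $2$-power enlargements $\mathbb{Z}/8\mathbb{Z} \oplus \mathbb{Z}/8\mathbb{Z}$ and $\mathbb{Z}/4\mathbb{Z} \oplus \mathbb{Z}/16\mathbb{Z}$ are outright forbidden over quartic fields by the result of Bruin and Najman (\cite{bruinnajman}, Theorem 7), while an enlargement by odd-order torsion (a point of order $3$ or $5$) would force $t$ to be a rational point on a modular curve of higher level, hence to lie in a finite set. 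Combining these excludes proper enlargement for all but finitely many $t$, giving equality for almost all $t$.

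The containment is essentially a finite explicit computation, so the genuinely delicate part is the uniform statement that the torsion is exactly $\mathbb{Z}/4\mathbb{Z}\oplus\mathbb{Z}/8\mathbb{Z}$ for almost all $t$. Controlling that the torsion does not drop is handled cleanly by specialization; the harder direction is ruling out jumps, which requires knowing that the sporadic larger torsion subgroups correspond to rational points on higher-level modular curves that are finite in number, and organizing these finitely many exclusions uniformly across the one-parameter family. That bookkeeping—rather than the explicit identification of $K$—is where I expect the main obstacle to lie.
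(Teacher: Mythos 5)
This statement is quoted verbatim from Jeon--Kim--Lee (\cite{jeonkimlee}, Theorem 4.10) and the paper gives no proof of it, so there is no internal argument to compare yours against; I can only assess your proposal on its own terms. Its overall shape (exhibit the torsion explicitly, identify the field of definition, then cap the torsion from above) is the right one, but your first step contains a concrete error. The displayed equation is the Tate normal form $E(b,c)$ with $c=0$ and $b=\nu^2-\tfrac{1}{16}$ (the final $x^3$ is a typo for $x^2$), and for $c=0$ the point $(0,0)$ has order exactly $4$, not $8$; this is \emph{not} the $X_1(8)$ family. The actual purpose of the substitution $b=\nu^2-\tfrac{1}{16}$ is that the discriminant $\Delta = b^4(1+16b)=16\,b^4\nu^2$ becomes a square, so that the full $2$-torsion --- and hence $\mathbb{Z}/2\mathbb{Z}\oplus\mathbb{Z}/4\mathbb{Z}$ --- is already defined over $\mathbb{Q}$. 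Consequently your proposed verification that $\mathbb{Z}/8\mathbb{Z}\subseteq E(\mathbb{Q})_{\mathrm{tors}}$ would fail for generic $t$, and your identification of $\mathbb{Q}(\sqrt{t^4-6t^2+1})$ as the splitting field of the $2$-division polynomial cannot be right either, since that splitting field is $\mathbb{Q}$ itself. The correct mechanism is that the point of order $8$ appears only after base change: one must halve a point of \emph{order $4$} (not a $2$-torsion point) using the general form of Lemma \ref{halvingpoint}, and it is exactly this halving condition, together with Theorem \ref{phi2g} governing the growth $\mathcal{C}_2\times\mathcal{C}_4 \to \mathcal{C}_4\times\mathcal{C}_4$ over $\mathbb{Q}(i)$ and $\mathcal{C}_2\times\mathcal{C}_4\to\mathcal{C}_2\times\mathcal{C}_8$ over $\mathbb{Q}(\sqrt{t^4-6t^2+1})$, that forces the particular shape of $\nu(t)$ and of $K$.

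For the upper bound, your exclusion of $\mathbb{Z}/8\mathbb{Z}\oplus\mathbb{Z}/8\mathbb{Z}$ and $\mathbb{Z}/4\mathbb{Z}\oplus\mathbb{Z}/16\mathbb{Z}$ via Bruin--Najman is fine, but the treatment of odd-order enlargements is a genuine gap: asserting that such an enlargement puts $t$ on ``a modular curve of higher level, hence in a finite set'' is not justified --- the relevant locus inside your one-parameter family could a priori be a genus $0$ or $1$ curve with infinitely many rational points, and you give no genus computation. This can be repaired, and in fact made unconditional, using Theorem \ref{fujita}: since $K$ is biquadratic, $E(K)_{\mathrm{tors}}\subseteq E(L)_{\mathrm{tors}}$ for $L$ the maximal elementary $2$-abelian extension of $\mathbb{Q}$, and the only groups in Fujita's list containing $\mathbb{Z}/4\mathbb{Z}\oplus\mathbb{Z}/8\mathbb{Z}$ are $\mathbb{Z}/4\mathbb{Z}\oplus\mathbb{Z}/8\mathbb{Z}$, $\mathbb{Z}/4\mathbb{Z}\oplus\mathbb{Z}/16\mathbb{Z}$ and $\mathbb{Z}/8\mathbb{Z}\oplus\mathbb{Z}/8\mathbb{Z}$; all of these have trivial odd part, and the latter two are impossible over quartic fields. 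So once the lower containment is established, equality holds with no exceptional $t$ at all, and no specialization or modular-curve finiteness argument is needed for that direction.
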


\begin{theorem}[\cite{jeonkimlee}, Theorem 4.11]
Let $K = \mathbb{Q}(\sqrt{-3},\sqrt{8t^3+1})$ with $t \in \mathbb{Q}$ and $t \neq 0, 1, -\frac{1}{2}$, and let $E$ be an elliptic curve defined by the equation
$$y^2 = x^3 - 27\mu(\mu^3 + 8)x + 54(\mu^6 - 20\mu^3 - 9),$$
where $\mu = \frac{2t^3+1}{3t^2}$.  Then the torsion subgroup of $E$ over $K$ is equal to $\mathbb{Z}/6\mathbb{Z} \oplus \mathbb{Z}/6\mathbb{Z}$ for almost all $t$.
\end{theorem}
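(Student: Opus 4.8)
The plan is to establish the two containments $\mathbb{Z}/6\mathbb{Z}\oplus\mathbb{Z}/6\mathbb{Z}\subseteq E(K)_{\mathrm{tors}}$ and $E(K)_{\mathrm{tors}}\subseteq\mathbb{Z}/6\mathbb{Z}\oplus\mathbb{Z}/6\mathbb{Z}$ separately: the first by an explicit torsion computation, the second by invoking Fujita's classification. Since $E[6]=E[2]\oplus E[3]$, the first containment amounts to exhibiting all of $E[2]$ and all of $E[3]$ as $K$-rational, so I would treat the $2$-part and the $3$-part independently.

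First I would handle the $2$-torsion. Writing the given model as $y^2=f(x)$ with $f(x)=x^3-27\mu(\mu^3+8)x+54(\mu^6-20\mu^3-9)$, I would show that $f$ has a root $e_1\in\mathbb{Q}(t)$, so that $E$ has a rational $2$-torsion point, and then factor $f(x)=(x-e_1)(x^2+px+q)$. The two remaining $2$-torsion points are rational over $\mathbb{Q}(\sqrt{p^2-4q})$, so the task reduces to the routine identity check that the discriminant $p^2-4q$ of the quadratic factor equals $8t^3+1$ up to a square in $\mathbb{Q}(t)^\times$; this places $E[2]$ inside $E(\mathbb{Q}(\sqrt{8t^3+1}))\subseteq E(K)$.

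Next I would handle the $3$-torsion, the goal being $E[3]\subseteq E(\mathbb{Q}(\sqrt{-3}))$. I would first locate a rational $3$-torsion point $P$ by exhibiting a rational root $x_0$ of the $3$-division polynomial $\psi_3(x)=3x^4+6Ax^2+12Bx-A^2$ (with $A,B$ the coefficients above) for which $f(x_0)$ is a square in $\mathbb{Q}(t)$. Since full $3$-torsion forces $\zeta_3\in\mathbb{Q}(E[3])$ by the Weil pairing, the field $\mathbb{Q}(E[3])$ contains $\mathbb{Q}(\sqrt{-3})$ and has degree $2$ or $6$ over $\mathbb{Q}$; to pin it down to $\mathbb{Q}(\sqrt{-3})$ I would exhibit a second, independent $3$-torsion point $Q$ with coordinates in $\mathbb{Q}(\sqrt{-3})$, equivalently verify that the remaining factors of $\psi_3$ split over $\mathbb{Q}(\sqrt{-3})$ with the associated $y$-coordinates also in $\mathbb{Q}(\sqrt{-3})$. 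This produces a $\mathbb{Q}(\sqrt{-3})$-rational basis $\{P,Q\}$ of $E[3]$, so $E[3]\subseteq E(\mathbb{Q}(\sqrt{-3}))\subseteq E(K)$. Combining the two parts gives $E[6]=E[2]\oplus E[3]\subseteq E(\mathbb{Q}(\sqrt{-3},\sqrt{8t^3+1}))=E(K)$, hence $\mathbb{Z}/6\mathbb{Z}\oplus\mathbb{Z}/6\mathbb{Z}\subseteq E(K)_{\mathrm{tors}}$.

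For the reverse containment I would use that $K$ is biquadratic, so $K\subseteq L=\mathbb{Q}(\{\sqrt{m}:m\in\mathbb{Z}\})$ and therefore $E(K)_{\mathrm{tors}}\subseteq E(L)_{\mathrm{tors}}$. Scanning the list in Theorem \ref{fujita}, the only group appearing there that contains $\mathbb{Z}/6\mathbb{Z}\oplus\mathbb{Z}/6\mathbb{Z}$ is $\mathbb{Z}/6\mathbb{Z}\oplus\mathbb{Z}/6\mathbb{Z}$ itself, since every other entry has $3$-rank at most one or cyclic $2$-part; equality follows immediately. The phrase ``for almost all $t$'' then only needs to absorb the finitely many degenerate parameters: those with $\Delta_E=0$, together with the thin set of $t$ for which $8t^3+1$ or $-3(8t^3+1)$ is a square in $\mathbb{Q}^\times$, in which case $K$ fails to be a genuine degree-$4$ biquadratic field. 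I expect the main obstacle to be the third step, on the $3$-torsion: controlling the unipotent part of the mod-$3$ representation so as to certify that $E[3]$ is defined over exactly $\mathbb{Q}(\sqrt{-3})$ and no larger field is required, as opposed to the comparatively mechanical $2$-torsion computation and the Fujita bound.
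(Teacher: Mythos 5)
Your proposal is correct in outline, but note first that the paper contains no proof of this statement to compare against: it is quoted verbatim from Jeon--Kim--Lee \cite{jeonkimlee} and used as a black box, so your argument is a genuine reconstruction rather than a paraphrase. Your route is sound and, in fact, your two halves are easier than you anticipate. The displayed model is the Weierstrass form of the Hesse cubic $x^3+y^3+z^3=3\mu xyz$, whose nine inflection points are exactly $E[3]$ and are defined over $\mathbb{Q}(\zeta_3)$ \emph{independently of} $\mu$ (e.g.\ the images of $(1:0:-1)$ and $(0:1:-1)$ even give $\mathbb{Q}$-rational points of order $3$); so the step you flagged as the main obstacle --- controlling the mod-$3$ image --- is uniform in $t$ and reduces to one symbolic check. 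For the $2$-part, your guessed discriminant identity is forced: one computes $\Delta_E \equiv 3(\mu^3-1) \bmod (\mathbb{Q}^\times)^2$ and $\mu^3-1=(t^3-1)^2(8t^3+1)/(27t^6)$, so $\mathbb{Q}(\sqrt{\Delta_E})=\mathbb{Q}(\sqrt{8t^3+1})$; hence once the degree-$3$ substitution $t\mapsto\mu=(2t^3+1)/(3t^2)$ makes the $2$-division cubic acquire a rational root (that is its purpose), the quadratic factor's discriminant is automatically $8t^3+1$ up to squares. Your Fujita upper bound is valid ($E$ is defined over $\mathbb{Q}$ and $K\subseteq L$), and indeed $\mathbb{Z}/6\mathbb{Z}\oplus\mathbb{Z}/6\mathbb{Z}$ is the unique group in Theorem \ref{fujita} containing itself; this is the same mechanism the paper uses in its Section 2, whereas \cite{jeonkimlee} argue on the modular-curve side, so your hybrid is arguably cleaner. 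It even shows the conclusion holds for \emph{every} admissible $t$: the parameters where $8t^3+1$ or $-3(8t^3+1)$ is a square correspond to rational points on the rank-zero Mordell curves $Y^2=X^3+64$ and $Y^2=X^3-1728$, which reduce precisely to the excluded values $t=0,1,-\frac{1}{2}$ (the last two also being where $\Delta_E=0$, since $\Delta_E$ vanishes exactly when $(t^3-1)^2(8t^3+1)=0$).

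One concrete caution before you carry out the ``routine identity checks'': as transcribed in this paper the constant term $54(\mu^6-20\mu^3-9)$ appears to be a typo for $54(\mu^6-20\mu^3-8)$. With $-8$ one gets $4A^3+27B^2=-2^6\cdot 78732\,(\mu^3-1)^3$ and $j=27\mu^3(\mu^3+8)^3/(\mu^3-1)^3$, the Hesse pencil invariants on which everything above rests; with $-9$ the discriminant does not factor through $(\mu^3-1)^3$, the curve is not the Hesse curve, and both your $2$- and $3$-torsion verifications would fail. So run your computations against the corrected model.
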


We now prove that there are infinitely many elliptic curves defined over $\mathbb{Q}$ with non-trivial 13-torsion over some cyclic quartic field.

\begin{proposition}
There exists infinitely many elliptic curves $E/\mathbb{Q}$ such that there exists a cyclic quartic field $K$ such that $E(K)$ has a 13-torsion point.
\end{proposition}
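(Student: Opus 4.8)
The plan is to exhibit a single base curve with a rational $13$-isogeny whose kernel generator is defined over a cyclic quartic field, and then to generate infinitely many non-isomorphic examples by quadratic twisting. For the base curve I would take $E_0=[1,0,1,266982,42637516]$ from the table above, which satisfies $E_0(K_0)_{\text{tors}}\cong\mathbb{Z}/13\mathbb{Z}$ for the cyclic quartic field $K_0=\mathbb{Q}(\alpha)$. Since $K_0$ is Galois and $E_0(K_0)[13]$ is cyclic, Lemma \ref{isogeny} (or Proposition \ref{quarisog}) shows that the kernel $C=\langle P_0\rangle$ is $\Gal(\overline{\mathbb{Q}}/\mathbb{Q})$-stable, so $E_0$ has a rational $13$-isogeny, and the action of Galois on $C$ is given by a character $\varphi_0:\Gal(\overline{\mathbb{Q}}/\mathbb{Q})\to(\mathbb{Z}/13\mathbb{Z})^{\times}$ via $\sigma(P_0)=\varphi_0(\sigma)P_0$. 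The field of definition $\mathbb{Q}(P_0)$ is the fixed field of $\ker\varphi_0$, hence cyclic of degree $\#\image\varphi_0$, and it sits inside $K_0$. It cannot be $\mathbb{Q}$ (no rational $13$-torsion, by Mazur) nor the intermediate quadratic subfield of $K_0$ (no $13$-torsion over quadratic fields for rational curves, by Theorem \ref{quadcase}), so $\mathbb{Q}(P_0)=K_0$ and $\#\image\varphi_0=4$.

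Next I would set up the twisting mechanism. For a squarefree integer $d$, the quadratic twist $E_0^{(d)}$ has mod-$13$ representation $\bar\rho_{E_0}\otimes\chi_d$, where $\chi_d$ is the quadratic character of $\mathbb{Q}(\sqrt d)$, regarded as valued in $\{\pm1\}\subseteq(\mathbb{Z}/13\mathbb{Z})^{\times}$. In particular the line $C$ remains Galois-stable, so $E_0^{(d)}$ again has a rational $13$-isogeny, and the character on its kernel is $\varphi_0\chi_d$; its kernel generator is therefore defined over the fixed field of $\ker(\varphi_0\chi_d)$, a cyclic field of degree $\#\image(\varphi_0\chi_d)$.

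The crux of the argument — and the step I expect to be the main obstacle — is to show that twisting cannot shrink the image, i.e.\ that $\#\image(\varphi_0\chi_d)=4$ for every $d$. Since $\image\varphi_0$ is the unique subgroup of order $4$ of the cyclic group $(\mathbb{Z}/13\mathbb{Z})^{\times}\cong\mathbb{Z}/12\mathbb{Z}$, it is cyclic of order $4$ and contains the unique element of order $2$, namely $-1$; thus $\{\pm1\}\subseteq\image\varphi_0$ and $\varphi_0\chi_d$ still takes values in $\image\varphi_0$. Composing with the quotient $q:\image\varphi_0\to\image\varphi_0/\{\pm1\}\cong\mathbb{Z}/2\mathbb{Z}$ kills $\chi_d$, because $\image\chi_d\subseteq\{\pm1\}=\ker q$, so $q\circ(\varphi_0\chi_d)=q\circ\varphi_0$ is surjective. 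Hence $\image(\varphi_0\chi_d)$ is not contained in $\{\pm1\}$, and since the only subgroup of the cyclic group $\image\varphi_0\cong\mathbb{Z}/4\mathbb{Z}$ not contained in $\{\pm1\}$ is the whole group, we conclude $\image(\varphi_0\chi_d)=\image\varphi_0$ has order $4$. Therefore $E_0^{(d)}$ acquires a $13$-torsion point over the cyclic quartic field fixed by $\ker(\varphi_0\chi_d)$.

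Finally I would conclude by counting. One checks directly that $j(E_0)\notin\{0,1728\}$, so for squarefree $d$ the twists $E_0^{(d)}$ fall into infinitely many $\mathbb{Q}$-isomorphism classes, as $E_0^{(d)}\cong_{\mathbb{Q}}E_0^{(d')}$ forces $d/d'$ to be a square. Together with the previous paragraph this produces infinitely many pairwise non-isomorphic elliptic curves over $\mathbb{Q}$, each having a point of order $13$ over some cyclic quartic field, proving the proposition. The delicate point remains the image computation: the whole argument succeeds precisely because $-1$ already lies in the order-$4$ image, so that the $\{\pm1\}$-valued twisting character $\chi_d$ can only permute the character within $\image\varphi_0$ rather than enlarge or shrink it.
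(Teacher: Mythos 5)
Your argument is correct, but it takes a genuinely different route from the paper. The paper works on the moduli side: it considers the modular curve $X_{\Delta}(13)$ for $\Delta=\{\pm 1,\pm 5\}\subseteq(\mathbb{Z}/13\mathbb{Z})^{\times}$, notes (via Jeon--Kim) that it has genus $0$, uses the explicit curve from the table to show it has a rational point and hence infinitely many, and then runs essentially your ``field of definition of $P$ is cut out by the character $\varphi$, whose image divides both $12$ and $4$ and cannot have order $1$ or $2$'' argument once for each point of $X_{\Delta}(13)(\mathbb{Q})$. You instead fix the single curve from the table and propagate it by quadratic twisting, using the observation that $-1$ lies in the unique order-$4$ subgroup $\{\pm 1,\pm 5\}$ of $(\mathbb{Z}/13\mathbb{Z})^{\times}$, so that multiplying $\varphi_0$ by the $\{\pm 1\}$-valued character $\chi_d$ can neither enlarge nor shrink the image; your verification of this is correct, and it is the exact same group-theoretic fact that motivates the paper's choice of $\Delta$. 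The trade-off: your construction is more elementary (no genus computation or moduli interpretation needed), but it produces infinitely many curves with a single $j$-invariant, i.e.\ infinitely many $\mathbb{Q}$-isomorphism classes but only one $\overline{\mathbb{Q}}$-isomorphism class, whereas the paper's parametrization yields infinitely many $j$-invariants (the $j$-map on $X_{\Delta}(13)$ is finite-to-one). This matters for consistency with the rest of the paper: the finiteness claim for $\mathbb{Z}/15\mathbb{Z}$ is justified there by the finiteness of $j$-invariants admitting a rational $15$-isogeny, which only rules out infinitude in the $\overline{\mathbb{Q}}$-isomorphism sense; under your weaker ($\mathbb{Q}$-isomorphism) reading of ``infinitely many'' one would have to re-examine whether $\mathbb{Z}/15\mathbb{Z}$ really occurs only finitely often (there the relevant order-$4$ subgroups of $(\mathbb{Z}/15\mathbb{Z})^{\times}$ do not contain $-1$, so your twisting trick does not transfer, but that requires an argument). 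So your proof establishes the proposition as literally stated, but the paper's version is the stronger and more uniform statement.
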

\begin{proof}
Let $\Delta = \{ \pm 1, \pm 5 \} \subseteq (\mathbb{Z}/13\mathbb{Z})^{\times}$.  Let $X_{\Delta}(13)$ be the modular curve defined over $\Q$ associated to the congruence subgroup:
$$
\Gamma_{\Delta}(13) := \left\{ \begin{pmatrix} a & b \\ c & d \end{pmatrix} \in \SL_{2}(\Z) \middle| \; \; a \bmod 13 \in \Delta, 13 \mid c \right\}
$$
Then by \cite{jeonkim} (Theorem 1.1 and Table 1), $X_{\Delta}(13)$ has genus 0.  The example of the curve achieving torsion subgroup $\mathbb{Z}/13\mathbb{Z}$ over a cyclic quartic field in the Table above shows that $X_{\Delta}(13)(\mathbb{Q})$ (and hence $Y_{\Delta}(13)(\mathbb{Q})$) has infinitely many points.
Now let $(E, \langle P \rangle )$, where $E/\mathbb{Q}$ and $\langle P \rangle$ is a 13-cycle of $E$, be a point on $X_{\Delta}(13)(\mathbb{Q})$.  Consider the Galois representation
$$
\rho : \Gal(\overline{\mathbb{Q}}/\mathbb{Q}) \rightarrow \GL(2,\mathbb{Z}/13\mathbb{Z})
$$
induced by the action of Galois on a basis $\{ P, R\}$ of $E[13]$.  Then we have, for any $\sigma \in \Gal(\overline{\mathbb{Q}}/\mathbb{Q})$,
$$
\rho(\sigma) = \begin{pmatrix} \varphi(\sigma) & * \\ 0 & * \end{pmatrix}
$$
Let $K$ be the field of definition of $P$.  Notice that $\ker \varphi = \Gal(\overline{\mathbb{Q}}/K)$, and so in particular we have $\text{Image } \varphi \cong \Gal(K/\mathbb{Q})$. By our choice of $\Delta$ it must be that for any $\sigma \in \Gal(\overline{\mathbb{Q}}/\mathbb{Q})$,
$$P^{\sigma} \in \{ \pm P, \pm 5 P \}.$$
Thus, $\sigma$ acts on $\langle P \rangle$ by multiplication by an element of order dividing 4.  In fact, there must exist a $\sigma \in \Gal(\overline{\mathbb{Q}}/\mathbb{Q})$ that acts on $\langle P \rangle$ by an element of order exactly 4, since otherwise $\text{Image } \varphi$ has size 2 or 1, implying that the degree of $K$ over $\mathbb{Q}$ is 2 or 1.  However, by Theorem \ref{quadcase} it is impossible for an elliptic curve over $\mathbb{Q}$ to have a 13-torsion point defined over a quadratic number field.  Thus, $\Gal(K/\mathbb{Q}) \cong \text{Image } \varphi \cong \{ \pm 1, \pm 5\} \cong \mathbb{Z}/4\mathbb{Z}$, and the proposition is complete.
\end{proof}

The group $\mathbb{Z}/5\mathbb{Z} \oplus \mathbb{Z}/5\mathbb{Z}$ appears for infinitely many non-isomorphic curves over the cyclic quartic field $\mathbb{Q}(\zeta_5)$, shown in Theorem 1.1 of \cite{gonzlozano}.

Finally we show an infinite family of elliptic curves such that there exists a biquadratic number field $K$ with $E(K)_{\text{tors}} \cong \mathbb{Z}/2\mathbb{Z}\oplus\mathbb{Z}/16\mathbb{Z}$ using the construction given in \cite{fujita} Section 5.

\begin{proposition}
Let $E$ be the elliptic curve given by $y^2 = x(x+(t^2-1)^4)(x+(2t)^4)$, where $t>1$ is an integer, and $K = \mathbb{Q}(\sqrt{t(t^2-1)},\sqrt{(t^2-1)(t^2+1)(t^2+2t-1)})$.  Then $E(K)_{\text{tors}} \cong \mathbb{Z}/2\mathbb{Z} \oplus \mathbb{Z}/16\mathbb{Z}$.  
\end{proposition}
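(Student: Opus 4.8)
The plan is to realize $\mathbb{Z}/2\mathbb{Z} \oplus \mathbb{Z}/16\mathbb{Z}$ from below by halving a rational $2$-torsion point three times via Knapp's criterion (Lemma \ref{halvingpoint}), and then to pin the torsion down from above with Fujita's classification (Theorem \ref{fujita}). Write $E: y^2 = (x-\alpha)(x-\beta)(x-\gamma)$ with $\alpha = 0$, $\beta = -(t^2-1)^4$, $\gamma = -(2t)^4$. For integers $t>1$ these roots are rational and distinct, so $E$ is nonsingular and $E(\mathbb{Q})[2] = E[2] \cong \mathbb{Z}/2\mathbb{Z} \oplus \mathbb{Z}/2\mathbb{Z}$. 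The goal is to produce a point $Q_4 \in E(K)$ of order $16$ lying above $(0,0)$ and to adjoin a second $2$-torsion point.

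First I would carry out the two halvings that stay over $\mathbb{Q}$. Applying Lemma \ref{halvingpoint} to $P=(0,0)$, the three quantities $x-\alpha = 0$, $x-\beta = (t^2-1)^4$, and $x-\gamma = (2t)^4$ are rational squares, so $(0,0)=2Q_2$ with $Q_2 \in E(\mathbb{Q})$ and $x(Q_2) = 4t^2(t^2-1)^2$ for an appropriate choice of signs. At $Q_2$ the three quantities become $4t^2(t^2-1)^2$, $(t^2-1)^2(t^2+1)^2$, and $4t^2(t^2+1)^2$, again rational squares, so a further halving produces $Q_3 \in E(\mathbb{Q})$ of order $8$ with $x(Q_3) = 4t(t-1)^3(t+1)$ for a suitable sign. (Thus $E(\mathbb{Q})_{\text{tors}} \supseteq \mathbb{Z}/2\mathbb{Z} \oplus \mathbb{Z}/8\mathbb{Z}$, consistent with Mazur.)

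The decisive step is halving $Q_3$, which forces exactly the field $K$. Here I would compute $x(Q_3) = (2(t-1))^2\, t(t^2-1)$ and, using the identity $(t^2+1)(t^2+2t-1) = t^4+2t^3+2t-1$, that $x(Q_3) + (t^2-1)^4 = (t-1)^2(t^2-1)(t^2+1)(t^2+2t-1)$ and $x(Q_3) + (2t)^4 = 4t(t^2+1)(t^2+2t-1)$. The first is a rational square times $t(t^2-1)$, the second a rational square times $(t^2-1)(t^2+1)(t^2+2t-1)$, and the third a rational square times the product of these two radicands. Hence all three are squares in $K = \mathbb{Q}(\sqrt{t(t^2-1)},\sqrt{(t^2-1)(t^2+1)(t^2+2t-1)})$, and $K$ is precisely the field they generate, so by Lemma \ref{halvingpoint} there is $Q_4 \in E(K)$ with $2Q_4 = Q_3$. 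Then $Q_4$ has order $16$ with $8Q_4 = (0,0)$, and adjoining any $2$-torsion point outside $\langle Q_4\rangle$ gives $\mathbb{Z}/2\mathbb{Z} \oplus \mathbb{Z}/16\mathbb{Z} \subseteq E(K)_{\text{tors}}$. In particular $K$ has degree $4$, since this group occurs neither over $\mathbb{Q}$ nor over a quadratic field by Mazur and Theorem \ref{quadcase}.

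For the upper bound, note that $K$ is generated by square roots, so $K \subseteq L$ and $E(K)_{\text{tors}} \subseteq E(L)_{\text{tors}}$. By Theorem \ref{fujita} the only groups on Fujita's list containing $\mathbb{Z}/2\mathbb{Z} \oplus \mathbb{Z}/16\mathbb{Z}$ are $\mathbb{Z}/2\mathbb{Z} \oplus \mathbb{Z}/16\mathbb{Z}$ and $\mathbb{Z}/4\mathbb{Z} \oplus \mathbb{Z}/16\mathbb{Z}$, and the latter does not occur over any quartic field by the theorem of Bruin and Najman \cite{bruinnajman}. Therefore $E(K)_{\text{tors}} \cong \mathbb{Z}/2\mathbb{Z} \oplus \mathbb{Z}/16\mathbb{Z}$; moreover, since the $j$-invariant varies nontrivially with $t$, distinct integers $t>1$ give infinitely many non-isomorphic curves. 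The main obstacle is the explicit three-step halving, specifically verifying the factorizations of $x(Q_3)+(t^2-1)^4$ and $x(Q_3)+(2t)^4$ and confirming that the three radicals arising at the last step generate exactly $K$.
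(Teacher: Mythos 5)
Your proof is correct, but it takes a genuinely different route from the paper. The paper's own proof is a two-line citation: it observes that $E$ is exactly Fujita's Section 5, Case 1.(II) construction applied to the Pythagorean triple $u=t^2-1$, $v=2t$, $w=t^2+1$ (so that $u^2+v^2=w^2$), and then notes that $\sqrt{-1}\notin K$ because $K$ is totally real for $t>1$, which by Fujita's analysis forces $E(K)_{\text{tors}}\cong \mathbb{Z}/2\mathbb{Z}\oplus\mathbb{Z}/16\mathbb{Z}$ rather than anything larger. You instead reprove the relevant special case of Fujita's construction from scratch: the three-step halving of $(0,0)$ via Lemma \ref{halvingpoint} is exactly where the Pythagorean identity enters (at $x(Q_2)=u^2v^2$ the three quantities are $(uv)^2$, $u^2(u^2+v^2)=u^2w^2$, $v^2(u^2+v^2)=v^2w^2$), and your factorizations $x(Q_3)=(2(t-1))^2\,t(t^2-1)$, $x(Q_3)+(t^2-1)^4=(t-1)^2(t^2-1)(t^2+1)(t^2+2t-1)$, and $x(Q_3)+(2t)^4=4t(t^2+1)(t^2+2t-1)$ all check out (e.g.\ at $t=2$ they give $24$, $105$, $280$, with radicands $6$, $105$, $70\equiv 6\cdot 105$ modulo squares). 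For the upper bound you use Bruin--Najman to kill $\mathbb{Z}/4\mathbb{Z}\oplus\mathbb{Z}/16\mathbb{Z}$ over quartic fields, where the paper uses the lighter observation that $\sqrt{-1}\notin K$ (full $4$-torsion would force $i\in K$ by the Weil pairing); both are valid, and your reading of Fujita's list (only $\mathbb{Z}/2\mathbb{Z}\oplus\mathbb{Z}/16\mathbb{Z}$ and $\mathbb{Z}/4\mathbb{Z}\oplus\mathbb{Z}/16\mathbb{Z}$ contain $\mathbb{Z}/2\mathbb{Z}\oplus\mathbb{Z}/16\mathbb{Z}$) is accurate. What your approach buys is self-containedness and an explicit point of order $16$, at the cost of the computations; what the paper's approach buys is brevity, at the cost of sending the reader to Fujita's case analysis. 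Your closing device for showing $[K:\mathbb{Q}]=4$ (a smaller degree would contradict Mazur or Theorem \ref{quadcase}) is a nice touch that the paper leaves implicit.
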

\begin{proof}
The curve $E$ fulfills the criteria put forth in \cite{fujita} Section 5 Case 1.(II) with $u = t^2 -1$, $v = 2t$, and $w = t^2+1$.  Notice that $\sqrt{-1} \not\in K$ because, since $t >1$, $K$ is totally real.  Therefore $E(K)_{\text{tors}} \cong \mathbb{Z}/2\mathbb{Z} \oplus \mathbb{Z}/16\mathbb{Z}$.
\end{proof}

Finally it remains to be remarked as to why $\mathbb{Z}/15\mathbb{Z}$ only appears finitely often as the torsion subgroup of rational elliptic curves over quartic Galois number fields.  This follows from Lemma \ref{isogeny} and the fact that there are only finitely many $j$-invariants of elliptic curves over $\mathbb{Q}$ having a $\Q$-rational15-isogeny.  



%
\end{document}